\pgfplotsset{compat=1.11}
\definecolor{dred}{HTML}{C11B17}
\definecolor{dgreen}{HTML}{41A317}
\newcommand{\cemph}[1]{\emph{\color{dred}#1}}
\newcommand{\R}{\ensuremath{\mathds{R}}}
\newcommand{\B}{\ensuremath{\mathds{B}}}
\newcommand{\N}{\ensuremath{\mathds{N}}}
\newcommand{\C}{\ensuremath{\mathds{C}}}
\newcommand{\Dcal}{\ensuremath{\mathcal{D}}}
\newcommand{\Bcal}{\ensuremath{\mathcal{B}}}
\newcommand{\Jcal}{\ensuremath{\mathcal{J}}}
\newcommand{\Lcal}{\ensuremath{\mathcal{L}}}
\newcommand{\Scal}{\ensuremath{\mathcal{S}}}
\newcommand{\Pcal}{\ensuremath{\mathcal{P}}}
\newcommand{\CK}{\ensuremath{\mathcal{K}}}
\newcommand{\eps}{\ensuremath\varepsilon}
\newcommand\gauge[1]{\left\| #1 \right\|}
\newcommand\abs[1]{\left\vert#1\right\vert}
\newcommand{\setn}[1]{\left\{#1\right\}}
\newcommand{\setcond}[2]{\left\{#1 \,:\, #2\right\}}
\newcommand{\defeq}{\mathrel{\mathop:}=}
\newcommand{\ii}{\mathrm{i}}
\newcommand{\gr}[2]{\mathrm{Gr}_{#1}(#2)}
\newcommand{\para}{nontrivial}
\newcommand\John{\mathcal{E}_J}
\DeclareMathOperator{\trace}{trace}
\DeclareMathOperator{\vol}{vol}
\DeclareMathOperator{\conv}{conv}
\DeclareMathOperator{\aff}{aff}
\DeclareMathOperator{\bd}{bd}
\DeclareMathOperator{\inte}{int}
\DeclareMathOperator{\gl}{GL}
\newtheorem{theorem}{Theorem}[section]
\newtheorem{lemma}[theorem]{Lemma}
\newtheorem{remark}[theorem]{Remark}
\newtheorem{proposition}[theorem]{Proposition}
\newtheorem{corollary}[theorem]{Corollary}
\title[Minkowski Chirality]{Minkowski Chirality: \\A Measure of Reflectional Asymmetry of Convex Bodies}
\author[A. Caragea]{Andrei Caragea}
\author[K. von Dichter]{Katherina von Dichter}
\author[K. Gottwald]{Kurt Klement Gottwald}
\author[F. Grundbacher]{Florian Grundbacher}
\author[T. Jahn]{Thomas Jahn}
\author[M. Runge]{Mia Runge}
\keywords{asymmetry, chirality, convex body, optimal containment, reflection}
\subjclass{52A40, 52A20}
\date{\today}
\begin{document}
\parindent 0pt

\begin{abstract}
Using an optimal containment approach, we quantify the asymmetry of convex bodies in $\R^n$ with respect to reflections across affine subspaces of a given dimension.
We prove general inequalities relating these \enquote{Minkowski chirality} measures to Banach--Mazur distances and to each other, and prove their continuity with respect to the Hausdorff distance. 
In the planar case, we determine the reflection axes at which the Minkowski chirality of triangles and parallelograms is attained, and show that $\sqrt{2}$ is a tight upper bound on the chirality in both cases.
\end{abstract}
 
\maketitle

\section{Introduction}

In his ground-laying work \cite{Grunbaum1963}, Grünbaum set up a general framework for quantifying the point (a)symmetry of convex bodies, i.e., compact convex sets with nonempty interior.
Specifically, a measure of (a)symmetry is a similarity-invariant (or even affinely invariant) Hausdorff continuous function $f$ that takes convex bodies to the unit interval with the property that $f(K)=1$ if and only if $K$ is point-symmetric.
In \cite{Grunbaum1963}, some generalizations are discussed, for example quantifying (a)symmetry with respect to reflections across affine subspaces of dimension at least one.
However, the author mentions lack of results in the literature in this direction.
Different notions of chirality or axiality for quantifying the (a)symmetry of planar shapes with respect to reflections across straight lines have been investigated in the mathematical literature in the past decades \cite{ChakerianStein1965,FaryRedei1950,GoenkaMoore2023,Valcourt1966b,Valcourt1966a}.
Asymmetry notions for planar convex bodies are also studied in mathematical chemistry \cite{BudaHeyde1992,BudaMislow1991,BudaMislow1992,WeinbergMislow1993}, where polygons serve as abstractions of molecules and where chirality impacts chemical properties.

Our contribution is based on an extension of the notion of Minkowski asymmetry \cite{BrandenbergGonzalez2017,BrandenbergKonig2015}, which, for a convex body $K$, is defined as the smallest dilation factor $\lambda>0$ such that $K$ is a subset of a translated and dilated copy of $-K$, the mirror image of $K$ upon reflection across the coordinate origin.
We incorporate reflections across higher-dimensional (affine) subspaces by defining the
\cemph{$j$th Minkowski chirality} $\alpha_j(K)$ as the smallest dilation factor $\lambda>0$ such that the convex body $K\subset\R^n$ is a subset of a translated and dilated copy of $\Phi_U(K)$, where $\Phi_U$ denotes the \cemph{reflection} across the $j$-dimensional affine subspace $U\subset\R^n$ for $j\in\setn{0,\ldots,n}$.
Note that the Minkowski asymmetry is $\alpha_0(K)$ in this terminology.

It is well-known that $\alpha_0(K) \in [1,n]$ for all convex bodies $K \subset \R^n$, with $\alpha_0(K)=1$ if and only if $K$ is point-symmetric, and $\alpha_0(K)=n$ if and only if $K$ is a fulldimensional simplex, see \cite{Grunbaum1963} and \cite[Note~14 for Section~3.1]{Schneider2014}.
Our main result for convex bodies in general dimensions extends the upper bound on the Minkowski asymmetry to all Minkowski chiralities $\alpha_j(K)$ for any $j \in \setn{0,\ldots,n}$.

\begin{theorem}\label{thm:dbm}
Let $K \subset\R^n$ be a convex body and $j \in \setn{0,\ldots,n}$.
Then
\begin{equation*}
    1 \leq \alpha_j(K) \leq \min \setn{ n,\frac{\alpha_0(K)+1}{2} \sqrt{n}  },
\end{equation*}
with $\alpha_j(K)=1$ if and only if there exists a $j$-dimensional affine subspace $U$ such that $K=\Phi_U(K)$.
\end{theorem}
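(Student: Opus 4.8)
The plan is to treat the three assertions separately, keeping the sharp upper bound for last. For the lower bound I would use that every reflection $\Phi_U$ is an isometry and hence volume preserving: from $K \subseteq c + \lambda\,\Phi_U(K)$ one gets $\vol(K) \le \lambda^n \vol(\Phi_U(K)) = \lambda^n \vol(K)$, so $\lambda \ge 1$ for every admissible configuration and thus $\alpha_j(K) \ge 1$. For the equality case, first I would argue that the infimum is attained: the reflection subspaces range over a compact Grassmannian and, after normalizing, the admissible translation vectors stay bounded, so a compactness argument yields a minimizing pair $(U,c)$ with $K \subseteq c + \Phi_U(K)$. Equality of volumes then forces $K = c + \Phi_U(K)$. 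The map $x \mapsto c + \Phi_U(x)$ is an isometry fixing $K$; if its component along $U$ (its glide part) were nonzero, iterating it would make $K$ invariant under a nontrivial translation, contradicting boundedness. Hence $c + \Phi_U$ is itself a reflection across some $j$-dimensional affine subspace $U'$ parallel to $U$, giving $K = \Phi_{U'}(K)$. The converse is immediate, since $K = \Phi_U(K)$ makes $\lambda = 1$ admissible.

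For the bound $\alpha_j(K) \le n$, I would align the reflection with the John ellipsoid of $K$. Let $E$ be the John ellipsoid of $K$ with center $z$; after a rotation about $z$ its axes are coordinate directions, so the reflection $\Phi_U$ across the $j$-dimensional coordinate subspace through $z$ satisfies $\Phi_U(E) = E$. John's theorem gives $E \subseteq K \subseteq z + n(E - z)$, and applying $\Phi_U$ to $E \subseteq K$ yields $E = \Phi_U(E) \subseteq \Phi_U(K)$. Combining, $K \subseteq z + n(E-z) \subseteq z + n(\Phi_U(K) - z)$, which places $K$ inside the $n$-fold dilate of $\Phi_U(K)$ about $z$, hence $\alpha_j(K) \le n$.

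For the sharp bound I would pass through the central symmetral $S = \tfrac12\bigl(K + (-K)\bigr)$, taken with respect to the Minkowski center of $K$ placed at the origin. Monotonicity of the Minkowski sum together with $-K \subseteq \alpha_0(K)\,K$ gives the two-sided estimate $\frac{2}{\alpha_0(K)+1}\,S \subseteq K \subseteq \frac{2\alpha_0(K)}{\alpha_0(K)+1}\,S$, so $K$ differs from the symmetric body $S$ only by asymmetry-controlled factors. Since $S$ is symmetric, its John ellipsoid $E$ is centered at the origin and satisfies $E \subseteq S \subseteq \sqrt{n}\,E$, and choosing $U$ aligned with the axes of $E$ again gives $\Phi_U(E) = E$. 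Feeding the inner inclusion through $\Phi_U$ shows $\frac{2}{\alpha_0(K)+1}\,E \subseteq \Phi_U(K)$, while the outer inclusion and the John estimate control $K$ from outside by $\sqrt{n}\,E$; assembling these should yield $K \subseteq c + \frac{\alpha_0(K)+1}{2}\sqrt{n}\,\Phi_U(K)$, and the minimum with the previous bound finishes the proof.

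The main obstacle is precisely the constant. Estimating $K$ from outside and $\Phi_U(K)$ from inside independently lets the asymmetry enter twice and only produces the factor $\alpha_0(K)$ in place of $\frac{\alpha_0(K)+1}{2}$. To recover the sharp constant one must use the translation $c$ essentially, exploiting that $K$ and its mirror image $\Phi_U(K)$ deviate from the symmetric model $E$ in opposite directions, so their extremal deviations cannot be simultaneously adversarial. I expect to formalize this by dualizing the containment into the support-function inequality $\int h_K \, d\mu \le \lambda \int h_K \circ \Phi_U \, d\mu$ over centered probability measures $\mu$ on the sphere and balancing the two contributions against each other; this balancing, rather than the John input, is where the bulk of the work lies.
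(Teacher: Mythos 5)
Your volumetric lower bound, your glide-reflection treatment of the equality case, and your direct John-ellipsoid proof of $\alpha_j(K)\leq n$ are all correct. The equality-case route (iterating $x\mapsto c+\Phi_U(x)$ to kill the translation component along $U$, then recognizing the map as a reflection across a parallel affine subspace) is a valid alternative to the paper's, which instead proves beforehand in \cref{prop:chirality-attained} that the optimal containment can be realized with a center $x^\ast\in K$ and no residual translation; and your $\alpha_j(K)\leq n$ argument is a self-contained version of what the paper deduces from $\alpha_j(K)\leq d_{BM}(K,\B_2^n)$ (\cref{thm:dbm_asymm}), both resting on the fact that an ellipsoid is fixed by the reflection across a subspace spanned by some of its principal axes. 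Note only that your compactness/attainment step is a sketch; making it rigorous is precisely the content of \cref{thm:circumradius-continuous,lem:reflection-continuous,prop:chirality-attained}.

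The genuine gap is the bound $\alpha_j(K)\leq\frac{\alpha_0(K)+1}{2}\sqrt{n}$, and you have diagnosed it yourself: in the symmetral argument the asymmetry enters twice---once in $K\subset\frac{2\alpha_0(K)}{\alpha_0(K)+1}S$ and once in $\frac{2}{\alpha_0(K)+1}E\subset\Phi_U(K)$---so the factors multiply to give only $\alpha_j(K)\leq\alpha_0(K)\sqrt{n}$, which is strictly weaker than the claimed constant whenever $\alpha_0(K)>1$. The proposed repair via support-function integrals over centered measures is not carried out, and nothing in the sketch indicates how it would replace the factor $\alpha_0(K)$ by $\frac{\alpha_0(K)+1}{2}$. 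The paper's mechanism for letting the asymmetry enter only once is different: it bounds $\alpha_j(K)\leq d_{BM}(K,\B_2^n)$ and then factors the Banach--Mazur distance through the minimal width, writing $d_{BM}(K,\B_2^n)=\inf\setcond{\frac{w(A(K),\B_2^n)}{2 r(A(K),\B_2^n)}\cdot\frac{2 R(A(K),\B_2^n)}{w(A(K),\B_2^n)}}{A\in\gl(\R^n)}$ with $w(K,C)=2r(K-K,C-C)$. The first factor is at most $\frac{\alpha_0(K)+1}{2}$ by \cite[Corollary~4.3]{BrandenbergKonig2015} (an affine-invariant bound, hence valid for every $A$), while the infimum of the second factor is at most $\sqrt{n}$ by \cite[Corollary~5.3]{BrandenbergGrundbacher2024}, so the asymmetry is charged only once, through the width-to-inradius ratio. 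This width-based factorization (or an equivalent substitute for the two cited results) is the missing ingredient; without it, your argument proves the theorem only with $\alpha_0(K)\sqrt{n}$ in place of $\frac{\alpha_0(K)+1}{2}\sqrt{n}$.
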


In fact, the upper bound in \cref{thm:dbm} can be strengthened and unified to
\begin{equation}\label{eq:improved_upperbound}
   \alpha_j(K) \leq \sqrt{\alpha_0(K)n }.
\end{equation}
Since $\alpha_0(K) \leq n$ with $\alpha_0(K) = n$ solely for simplices, this result implies $\alpha_j(K) \leq n$ and in particular that only simplices might have $j$th Minkowski chirality $n$.
The inequality \eqref{eq:improved_upperbound} is based on a bound on the Banach--Mazur distance $d_{BM}(K,\B_2^n) \leq \sqrt{ \alpha_0(K)n}$ for any convex body $K \subset \R^n$ from an unpublished manuscript \cite{BrandenbergGonzalezGrundbacher2025}.

We recall that the \cemph{Banach--Mazur distance} between convex bodies $K,L \subset \R^n$ is defined by 
\begin{equation*}
    d_{BM}(K,L)=\inf\{\lambda > 0:t^1 + K\subset A(L)\subset t^2+\lambda K,\,A\in\gl(\R^n), \, t^1,t^2\in\R^n\},
\end{equation*}
where $\gl(\R^n)$ denotes the set of invertible real $n\times n$ matrices, see \cite[p.~589]{Schneider2014}.

The inequality \eqref{eq:improved_upperbound} is also consequential for the absolute upper bound on the $j$th Minkowski chirality.
Based on a stability result from \cite{Schneider2009}, any convex body $K$ with Minkowski asymmetry $\alpha_0(K)$ near $n$ is close to a simplex in the Banach-Mazur distance.
Together with \eqref{eq:improved_upperbound}, this means that either the supremum of $\alpha_j(T)$ over all simplices $T \subset \R^n$ equals $n$, or there exists some constant $c(n,j) < n$ such that any convex body $K \subset \R^n$ satisfies $\alpha_j(K) \leq c(n,j)$ (see \cref{chap:main-theorem} for details).
In other words, we can determine whether the inequality $\alpha_j(K) \leq n$ is tight by checking only simplices.

Although this remains a challenging problem in general, we are able to solve it in the planar case for the first Minkowski chirality.

\begin{theorem}\label{thm:triangle}
Let $K\subset\R^2$ be a triangle.
Then the infimum in the definition of $\alpha_1(K)$ is attained at some affine subspace $U$ of $\R^2$ that is necessarily
 \begin{enumerate}[label={(\roman*)},leftmargin=*,align=left,noitemsep]
    \item{parallel to the bisector of one of the largest interior angles of $K$,}
    \item{parallel to the bisector of one of the smallest interior angles of $K$, or}
    \item{perpendicular to one of the longest edges of $K$.}
 \end{enumerate}
Moreover, we have
\begin{equation}\label{eq:triangle-range}
   \setcond{\alpha_1(K)}{K\subset\R^2\text{ is a triangle}}=\left[1,\sqrt{2}\right),
\end{equation}
with $\alpha_1(K)=1$ precisely for isosceles triangles.
\end{theorem}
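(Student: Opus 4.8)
The plan is to reduce the two-parameter family of reflections to a one-parameter optimization over the \emph{direction} of the axis. Since a subsequent translation of $\Phi_U(K)$ is allowed, the position of $U$ is irrelevant and only its direction matters: writing $\sigma_\theta$ for the reflection across the line through the origin making angle $\theta$ with a fixed axis, one has $\sigma_\theta=R_{2\theta}\sigma_0$, so as $\theta$ runs over $[0,\pi)$ the reflected triangles $\sigma_\theta(K)$ run over all rotations of a single mirror image $\sigma_0(K)$. Consequently the family $\{\lambda\,\Phi_U(K)+t\}$ is exactly the set of triangles directly similar to $\sigma_0(K)$, and $\alpha_1(K)$ equals the smallest similarity ratio $\lambda$ of a triangle anti-similar to $K$ (a scaled mirror image) that contains $K$, minimized over its orientation $\phi$. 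Attainment of the infimum then follows from compactness of the circle of directions and continuity of the optimal ratio in $\phi$.

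For a fixed orientation I would solve the inner optimal-containment problem in closed form. If the outer triangle $S$ has unit edge-normals $n_1,n_2,n_3$, support values $b_i$, and edge lengths $c_i>0$, then $\sum_i c_i n_i=0$, and containing $K$ in $\lambda S+t$ means $h_K(n_i)\le\lambda b_i+\langle n_i,t\rangle$ for each $i$. Weighting by $c_i$ and summing annihilates the translation term, giving the exact value
\begin{equation*}
  \lambda(\phi)=\frac{\sum_i c_i\,h_K(R_\phi n_i)}{\sum_i c_i\,b_i},
\end{equation*}
which is achieved because the three constraints can be made simultaneously tight (the simplex has exactly three facets whose normals positively span $\R^2$). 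Thus $\alpha_1(K)=\min_\phi\lambda(\phi)$, where $\lambda$ is an explicit, piecewise-sinusoidal function of $\phi$ determined by the angles of $K$, since $h_K$ is piecewise linear and $h_K(R_\phi n_i)$ is sinusoidal on each normal cone of $K$.

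The core of the argument is to locate $\min_\phi\lambda(\phi)$ on the circle and to identify the minimizing directions. The minimum is attained either at a smooth critical point ($\lambda'(\phi)=0$) or at a breakpoint where the contact combinatorics change, i.e.\ where some $R_\phi n_i$ crosses an edge-normal of $K$. Geometrically, a breakpoint occurs precisely when an edge of the outer mirror triangle becomes parallel to an edge of $K$, which for a mirror image happens exactly when the reflection axis is perpendicular to that edge; this should yield family~(iii). The smooth critical points, by contrast, correspond to orientations in which the reflection exchanges the two edges meeting at one vertex, i.e.\ the axis is parallel to an angle bisector, and carrying out the stationarity analysis should show that only the bisectors of the largest and of the smallest angle can furnish the minimum, giving families~(i) and~(ii). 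I expect this case analysis---classifying the finitely many vertex-on-edge contact patterns between $K$ and its containing mirror copy, writing $\lambda(\phi)$ on each, and comparing critical values to rule out the middle-angle bisector and spurious stationary points---to be the main obstacle, since it is where the trigonometry is heaviest and where the precise list (i)--(iii) is pinned down.

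Finally I would extract the range and the equality cases. The bound $\alpha_1(K)\ge 1$ is immediate, and $\alpha_1(K)=1$ forces $K$ to lie in a congruent mirror image of itself; equality of areas then gives $K=\sigma_0(K)$ up to a direct isometry, i.e.\ $K$ admits a reflection symmetry, so $K$ is isosceles (and conversely isosceles triangles attain $\lambda=1$ on their symmetry axis). For the upper bound I would evaluate $\lambda$ at the three candidate axes, take the minimum, and bound the resulting expression in the angles of $K$ by $\sqrt 2$, showing the inequality is strict for every genuine triangle but approached along a suitable degenerating family. This last point---proving simultaneously the uniform strict bound $\alpha_1(K)<\sqrt 2$ and its sharpness---is the other delicate step, and it is what produces the half-open interval in \eqref{eq:triangle-range}.
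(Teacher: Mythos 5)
Your reduction to a one-parameter problem and your closed-form expression for the inner containment problem are both correct: since the three outer normals $R_\phi n_1, R_\phi n_2, R_\phi n_3$ of the rotated mirror triangle satisfy the single positive dependence $\sum_i c_i R_\phi n_i = 0$, weighting the three support inequalities by $c_i$ does eliminate the translation, and the resulting bound $\lambda(\phi) = \bigl(\sum_i c_i\,h_K(R_\phi n_i)\bigr) / \bigl(\sum_i c_i b_i\bigr)$ is attained because the three tightness equations are solvable for $t$ exactly at that value of $\lambda$. So $\alpha_1(K)=\min_\phi \lambda(\phi)$, and this is a genuinely different route from the paper, which instead proves geometrically that optimal containment of a triangle in a triangle \emph{under rotation} forces an edge-in-edge contact (via the Brandenberg--K\"onig criterion) and only then computes candidate values.

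However, the core of your argument --- locating $\min_\phi\lambda(\phi)$ --- is wrong as sketched, and the remaining work is deferred rather than done. A breakpoint of $\lambda$ occurs when some $R_\phi n_i$ coincides with an outer edge normal $u_j$ of $K$. For $i=j$ this says the axis is perpendicular to edge $i$, as you claim; but for $i\neq j$ it says the reflection maps $u_i$ to $u_j$, i.e.\ the axis is parallel to the \emph{interior angle bisector} at the vertex common to edges $i$ and $j$. So the angle-bisector axes are breakpoints (kinks) of $\lambda$, not smooth critical points, and your identification of breakpoints with family (iii) alone is incorrect: an edge of the mirror triangle can be parallel to an edge of $K$, with matching outer normal, also when the reflection swaps two edges. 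The correct argument inside your own framework is available but you did not make it: on each arc between breakpoints, $\lambda$ is a sum of frequency-one sinusoids in $\phi$, hence a single sinusoid $C\cos(\phi-\phi_0)$ with $C>0$; since $\lambda\geq 1>0$, each arc lies in a half-period on which the only interior critical point is a maximum, so the global minimum must occur at a breakpoint. That would correctly yield the six candidates (three interior-angle bisectors, three edge perpendiculars). Even granting this repair, everything that actually pins down the theorem is still missing: evaluating $\lambda$ at the six candidates (the paper obtains the ratios $y/x$, $z/y$, $z/x$ for the bisectors and $1+(y^2-x^2)/z^2$ etc.\ for the perpendiculars, for side lengths $x\leq y\leq z$), ruling out the middle-angle bisector and the perpendiculars to the two shorter edges (which needs strict inequalities such as $z/y < 1+(z^2-y^2)/x^2$, equivalent to $(z-y)(y^2-x^2+yz)>0$), and the strict uniform bound $\alpha_1(K)<\sqrt{2}$ (via the triangle inequality $x>z-y$) together with its sharpness along a degenerating family. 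You explicitly flag these steps as things that ``should'' work; as written they are gaps, and in particular nothing in your text excludes a minimum at the middle-angle bisector or at a perpendicular to a shorter edge.
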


The above approach yields the upper bound $\alpha_1(K) < 1.95$ for all convex bodies $K \subset \R^2$, see \eqref{eq:c21}.

The question of how large $\alpha_j(K)$ can be for general $n$ and $j$ is still open, as even deciding whether the inequality $\alpha_j(K) \leq n$ is actually tight appears to be difficult.
Instead, we focus on a special class of convex bodies and answer the first question for planar point-symmetric convex bodies: the upper bound from \cref{thm:dbm} becomes $\sqrt{2}$ in this case, and the following two theorems show that this bound is reached precisely by a specific family of parallelograms.
The second theorem uses the \cemph{John ellipsoid} $\John(K)$ of a convex body $K \subset \R^n$, which is the unique volume-maximal ellipsoid contained in $K$ (see \cref{chap:preliminaries} for details).

\begin{theorem}\label{thm:upperboundsymmetric}
Let $K\subset\R^2$ be a point-symmetric convex body with $d_{BM}(K,P) \geq 1 + \eps$ for a parallelogram $P\subset\R^2$ and some $\eps>0$.
Then 
\begin{equation*}
    \alpha_1(K) < \sqrt{2} \left( 1 - \frac{\eps}{10} \right).
\end{equation*}
\end{theorem}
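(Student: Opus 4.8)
The plan is to bound $\alpha_1(K)$ by the Banach--Mazur distance from $K$ to the Euclidean disk, and then to upgrade the equality analysis of that bound into a quantitative stability estimate.

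Since $\alpha_1$ is similarity invariant and $K$ is point-symmetric, I first translate so that $K=-K$. For the reflection $\Phi_U$ across a line $U$ with underlying linear reflection $R$, moving $U$ parallel to itself only translates $\Phi_U(K)$, so we may assume $U$ passes through the origin; and averaging an inclusion $K\subseteq t+\lambda R(K)$ with its point reflection (using $-K=K$ and $-R(K)=R(K)$) shows the optimal translation is $t=0$. Because $R$ is an involution, $K\subseteq\lambda R(K)$ is equivalent to $R(K)\subseteq\lambda K$, so
\[
\alpha_1(K)=\min_{R}\rho(R),\qquad\rho(R)\defeq\min\setn{\lambda>0:R(K)\subseteq\lambda K},
\]
the minimum over all linear reflections $R$. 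Now let $E$ be a centered ellipse with $E\subseteq K\subseteq d\,E$ and $d\defeq d_{BM}(K,\B_2^2)$, and let $R$ be the reflection across a principal axis of $E$, so that $R(E)=E$. Then $R(K)\subseteq d\,E\subseteq d\,K$, whence $\rho(R)\le d$ and $\alpha_1(K)\le d_{BM}(K,\B_2^2)$. This is the mechanism behind \eqref{eq:improved_upperbound} for $n=2$ and $\alpha_0(K)=1$, and it already recovers $\alpha_1(K)\le\sqrt2$.

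In view of this inequality it suffices to prove the stability estimate
\[
d_{BM}(K,\B_2^2)\le\sqrt2\Bigl(1-\tfrac{1}{10}\bigl(d_{BM}(K,P)-1\bigr)\Bigr),
\]
since inserting $d_{BM}(K,P)\ge1+\eps$ then yields the theorem; moreover the inequality is strict once $K$ is not a parallelogram, as equality can hold only in the extremal case. Here I use that any two parallelograms are affinely equivalent, so $d_{BM}(K,P)$ is independent of the chosen $P$, and that among point-symmetric planar bodies the distance to $\B_2^2$ is maximized, with value $\sqrt2$, exactly by parallelograms: if $d_{BM}(K,\B_2^2)=\sqrt2$ then no ellipse beats the factor $\sqrt2$, so the factor in John's inclusion $K\subseteq\sqrt2\,\John(K)$ is attained, which characterizes parallelograms.

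To prove the stability estimate I would place $K$ in John position, $\John(K)=\B_2^2$, so that $\B_2^2\subseteq K\subseteq\sqrt2\,\B_2^2$, and examine the contact points of $K$ with the inscribed disk and with the circumscribed disk $\sqrt2\,\B_2^2$. The symmetric John decomposition pins down the inner contacts, while tightness of $K\subseteq\sqrt2\,\B_2^2$ at the extremal square produces four tangent sides together with four vertices on $\sqrt2\,\B_2^2$ in the $45^\circ$ directions. The goal is to show that a deviation of magnitude $d_{BM}(K,P)-1$ from this rigid configuration forces, through a stability version of John's inclusion, a proportional decrease of the circumscribing factor $d$ below $\sqrt2$, with slope at least $\sqrt2/10$. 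The main obstacle is exactly this last quantitative step: converting the qualitative uniqueness of the parallelogram extremizer into an explicit linear stability bound with the stated constant, valid over the full admissible range of $d_{BM}(K,P)$ rather than only in an infinitesimal neighborhood of the square. I expect this to hinge on an explicit estimate controlling how far the outer contact points may retreat into $\sqrt2\,\B_2^2$ as $K$ moves away from a parallelogram, followed by an optimization certifying that $\tfrac1{10}$ is an admissible (if suboptimal) constant.
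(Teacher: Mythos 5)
Your first step is correct: reducing the theorem to the inequality $\alpha_1(K) \leq d_{BM}(K,\B_2^2)$ by reflecting across a principal axis of an optimal centered ellipse is exactly the mechanism the paper uses (its \cref{thm:dbm_asymm}, proved via \cref{prop:ellipsoid_invar_reflection} and \eqref{eq:fejestoth}), and your symmetrization argument showing that the optimal translate may be taken to be $0$ for $0$-symmetric $K$ is sound.

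The genuine gap is the second step, and you name it yourself: the quantitative stability estimate $d_{BM}(K,\B_2^2) \leq \sqrt{2}\left(1 - \frac{1}{10}\left(d_{BM}(K,P)-1\right)\right)$ is never proved. You outline a plan (John position, contact-point analysis) but explicitly concede that converting the qualitative uniqueness of parallelograms as extremizers into a linear bound with slope $\sqrt{2}/10$, valid over the whole admissible range, is "the main obstacle." That estimate is the entire substance of the theorem --- the constant $\frac{1}{10}$ in the statement comes precisely from it --- and it does not follow from the qualitative characterization by any soft argument: compactness yields some nonexplicit modulus of stability, not a linear one with an explicit constant holding globally. The paper does not prove it from scratch either; it invokes \cite[Theorem~1.3]{GrundbacherKobos2024}, which states that a point-symmetric $K \in \CK^2$ with $d_{BM}(K,P) \geq 1 + \frac{10}{\sqrt{2}}\delta$ satisfies $d_{BM}(K,\B_2^2) < \sqrt{2} - \delta$, and the theorem follows by taking $\delta = \frac{\sqrt{2}}{10}\eps$. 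Without proving or citing such a result, your argument establishes only the qualitative fact that $\alpha_1(K) < \sqrt{2}$ for point-symmetric non-parallelograms, not the stated bound. (A secondary flaw: your stability inequality is non-strict, and the claim that "equality can hold only in the extremal case" is itself unproved, whereas the theorem asserts a strict inequality; the cited stability result is strict, which is how the paper obtains it.)
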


\begin{theorem}\label{thm:parallelogram}
    Let $K\subset\R^2$ be a parallelogram.
    Then the infimum in the definition of $\alpha_1(K)$ is attained at some affine subspace $U$ of $\R^2$ that is necessarily parallel to
 \begin{enumerate}[label={(\roman*)},leftmargin=*,align=left,noitemsep]
    \item{the bisector of an angle formed by consecutive edges of $K$,}
    \item{the bisector of an angle formed by the diagonals of $K$, or}
    \item{a principal axis of the John ellipse $\John(K)$ of $K$.} 
 \end{enumerate}
    Moreover, we have
\begin{equation}\label{eq:parallelogram-range}
   \setcond{\alpha_1(K)}{K\subset\R^2\text{ is a parallelogram}}=\left[1,\sqrt{2}\right],
\end{equation}
with $\alpha_1(K)=1$ precisely for rectangles and rhombuses. Moreover, $\alpha_1(K)=\sqrt{2}$ if and only if the angles between the diagonals coincide with the interior angles and the ratio between the lengths of the longer edges and the shorter edges is at least $\sqrt{2}$.
\end{theorem}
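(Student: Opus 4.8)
The plan is to reduce the computation of $\alpha_1(K)$ to a one-variable optimization and then to classify its minimizers. Since a parallelogram $K$ is centrally symmetric and translations of the mirror copy are free, I would first argue that the reflection axis may be taken through the center of $K$, so that $\alpha_1(K)=\min_\phi \lambda(K,R_\phi K)$, where $R_\phi$ is the linear reflection across the line through the origin at angle $\phi$ and $\lambda(K,L)$ denotes the smallest $\lambda$ with $K\subset t+\lambda L$. The key observation is a containment lemma for parallelograms: writing the parallelogram $R_\phi K$ as an intersection of two slabs with edge normals $n_1,n_2$, the set $t+\lambda R_\phi K$ is the intersection of the two scaled and translated slabs, and because $n_1\not\parallel n_2$ the two slab constraints on $t$ decouple. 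Hence $\lambda(K,R_\phi K)=\max_{i}\, w_K(n_i)/w_{R_\phi K}(n_i)$, where $w_K$ is the width function. Using that $R_\phi$ is an involutive isometry, this simplifies to $\alpha_1(K)=\min_\phi\max_{i=1,2} w_K(R_\phi m_i)/w_K(m_i)$, where $m_1,m_2$ are the edge normals of $K$ itself.

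Next I would make this completely explicit. Normalizing $K$ (by similarity invariance) to have edge vectors $f_1=(a,0)$ and $f_2=(b\cos\theta,b\sin\theta)$, the width formula $w_K(n)=\abs{\langle f_1,n\rangle}+\abs{\langle f_2,n\rangle}$ turns the problem into $\alpha_1(K)=\min_\psi\max(F_1(\psi),F_2(\psi))$ with
\begin{align*}
F_1(\psi)&=\frac{a\abs{\cos\psi}+b\abs{\cos(\psi-\theta)}}{b\sin\theta}, &F_2(\psi)&=\frac{a\abs{\cos(\psi-\theta)}+b\abs{\cos(\psi-2\theta)}}{a\sin\theta},
\end{align*}
where $\psi$ is the angle of the reflected normal $R_\phi m_1$. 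The upper bound $\alpha_1(K)\le\sqrt2$ is already supplied by \cref{thm:dbm} in the point-symmetric case $\alpha_0(K)=1$, so only the lower range and the location of the minimizers remain. I would analyze the upper envelope $\max(F_1,F_2)$: each $F_i$ is piecewise a sum of two $\abs{\cos}$-lobes, with corners where a reflected normal becomes parallel to an edge normal and interior smooth maxima where it aligns with a diagonal. Since the only interior smooth critical points are maxima of the width, a minimizer of the envelope must sit either at a transversal crossing $F_1=F_2$ or at the corner shared by both $F_i$. The crossing condition collapses, after the common $ab\abs{\cos(\psi-\theta)}$ terms cancel, to $a^2\abs{\cos\psi}=b^2\abs{\cos(\psi-2\theta)}$, whose two sign branches I would identify, by a direct $\tan(2\phi)$ computation, with the axis being parallel to a bisector of the diagonals (case (ii)) and to a principal axis of $\John(K)$ (case (iii)), respectively; the shared corner $\psi=\theta+\tfrac{\pi}{2}$ corresponds to $R_\phi m_1=\pm m_2$, i.e.\ to the axis being parallel to a bisector of consecutive edges (case (i)), and there the envelope equals the edge ratio $\max(a,b)/\min(a,b)$.

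With the minimizers pinned to these three families, $\alpha_1(K)$ is the smallest of the corresponding values, and I would finish as follows. The characterization $\alpha_1(K)=1$ is immediate from \cref{thm:dbm}: it holds iff $K$ admits a reflection axis, and among parallelograms this forces a rectangle or rhombus. For the extremal case I would show the envelope minimum equals $\sqrt2$ exactly when the binding crossing value equals $\sqrt2$, which after substituting back is equivalent to the angle between the diagonals equalling an interior angle, while simultaneously the edge-ratio candidate does not undercut it, i.e.\ $\max(a,b)/\min(a,b)\ge\sqrt2$; if instead the edge ratio is below $\sqrt2$ then already the edge-bisector reflection yields $\alpha_1(K)<\sqrt2$. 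Finally, continuity of $\alpha_1$ with respect to the Hausdorff distance, together with the connectedness of the family of parallelograms and the intermediate value theorem applied along a path from a square ($\alpha_1=1$) to an extremal parallelogram ($\alpha_1=\sqrt2$), yields the full range $[1,\sqrt2]$.

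The main obstacle is the envelope analysis in the second step: keeping track of the sign patterns of the four $\abs{\cos}$-terms across the different angular sectors, verifying that each transversal crossing is genuinely a downward-then-upward valley (so that it is a local minimum rather than a spurious coincidence of branches), and checking that the three candidate values are consistently ordered so that the claimed minimizer really is the global one. Equally delicate is the reformulation of the crossing value $\sqrt2$ as the clean geometric condition \enquote{angle between the diagonals equals an interior angle}, which requires the explicit $\cos\gamma$ computation for the diagonal angle and its comparison with $\theta$.
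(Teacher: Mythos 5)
Your proposal is correct, and it takes a genuinely different route from the paper. The paper finds the three candidate axes geometrically (\cref{thm:cool-proof}): it applies the optimal-containment criterion (\cref{thm:brandenberg-koenig}) to classify the incidence patterns of vertices of $K$ on the boundary of $R(K,\Phi_U(K))\Phi_U(K)$, rules out the non-optimal patterns by a rotation perturbation, and settles the remaining pattern by an explicit John-ellipse computation; the three values are then computed axis by axis (\cref{prop:angle-bisector-parallelogram}), the John one via a generalized Marden theorem. You instead produce a single closed formula valid for \emph{every} axis: since a parallelogram is an intersection of two slabs with linearly independent normals, the two slab constraints on the translation decouple, so $R(K,\Phi_U(K))$ is the larger of two width ratios, and the whole problem becomes minimizing the upper envelope of two piecewise-sinusoidal functions of the axis angle. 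Your reduction of the minimizers to the two crossings plus the shared corner is sound (on each smoothness interval $F_i$ is a positive arc of a single sinusoid, so interior critical points are maxima; at the non-shared corners the other function dominates strictly, e.g.\ $1+2b\abs{\cos\theta}/a$ versus $1$ for a \para{} parallelogram), and your branch identification is right: the branch $a^2\cos\psi=-b^2\cos(\psi-2\theta)$ gives $\tan 2\phi = b^2\sin 2\theta/(a^2+b^2\cos 2\theta)$, which is exactly the principal-axis direction of $\John(K)$ (an eigenvector of $f_1f_1^\top+f_2f_2^\top$, so no Marden theorem is needed), the other branch matches the bisector of the diagonals, and the shared corner gives the edge ratio. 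Your method thus delivers the candidates, their values, and essentially the phase diagram of \cref{thm:parallelogram-phase-diagram} in one stroke with elementary trigonometry; what the paper's method buys is the synthetic contact descriptions (\cref{thm:cool-proof2}) and the perturbation technique that it reuses for triangles. One point you must still nail down in the equality case: knowing $\alpha_1(K)\leq\sqrt{2}$ from \cref{thm:dbm} does not by itself identify which candidate is binding, so you need the analogue of the paper's inequality \eqref{eq:john-axis-ineq} --- the John-branch value never exceeds $\sqrt{2}$, with equality precisely under the angle condition --- before the characterization \enquote{$\alpha_1(K)=\sqrt{2}$ iff the diagonal angles coincide with the interior angles and the edge ratio is at least $\sqrt{2}$} follows; your outline flags this computation, and it goes through.
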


Our paper is organized as follows.
We start with definitions, notations, and preliminaries in \cref{chap:preliminaries}.
We proceed with basic properties of $\alpha_j$ such as similarity invariance and continuity with respect to the Hausdorff distance in \cref{chap:basic-properties}.
Afterwards, we show inequalities that compare certain Minkowski chiralities, which lead to detailed proofs of \cref{thm:dbm,thm:upperboundsymmetric} in \cref{chap:main-theorem}.
Lastly, we turn to the planar case and show \cref{thm:parallelogram,thm:triangle} in \cref{chap:planar,chap:triangle}. \cref{chap:planar,chap:triangle} also contain a detailed analysis of the explicit 1st Minkowski chirality of arbitrary parallelograms and triangles depending on various natural parametrizations. 

 \section{Preliminaries}\label{chap:preliminaries}
\subsection{General notation}
For $x=(x_1,\ldots,x_n)^\top \in \R^n$, the \cemph{Euclidean norm} is given by $\gauge{x}\defeq \sqrt{\sum_{i=1}^nx_i^2}$ and the \cemph{Euclidean unit ball} is denoted by $\B_2^n\defeq\setcond{x\in\R^n}{\gauge{x}\leq 1}$.
Let $X,Y \subset \R^n$, $z\in\R^n$, and $\mu >0$.
The \cemph{Minkowski sum} of $X$ and $Y$ is given by $X + Y \defeq \setcond{x + y}{x \in X, y \in Y}$.
Sets of the form $X + z \defeq z + X \defeq \setn{z} + X$ and $\mu X \defeq \setcond{\mu x}{x\in X}$ are called \cemph{translates} and \cemph{dilates} of $X$, respectively.
We abbreviate $(-1) X$ and $X + ((-1) Y)$ by $-X$ and $X - Y$, respectively.
The set $X$ is \cemph{$z$-symmetric} if $X-z = z-X$,
and \cemph{point-symmetric} if it is $z$-symmetric for some $z \in \R^n$.
We write $X \subset_t Y$ if there exists $z \in \R^n$ such that $X \subset Y + z$.
Similarly, if there exists $z\in\R^n$ such that $X=Y+z$, we shall write $X=_t Y$ for short.
If $f:\R^n\to\R^n$, then $f(X)\defeq \setcond{f(x)}{x\in X}$ denotes the image of $X$ under $f$.
We say that $f$ is a \cemph{similarity transform} if it is a map of the form $f(x) = rAx+b$ with $r > 0$, $b \in \R^n$, and $A \in \R^{n\times n}$ orthogonal.

We write $\conv(X)$, $\aff(X)$, $\bd(X)$, and $\inte(X)$ for the \cemph{convex hull}, \cemph{affine hull}, \cemph{boundary}, and \cemph{interior} of $X$, respectively.
(For the sake of readability, we will omit the parentheses if the set $X$ is written with curly brackets.)
For $x,y \in \R^n$, the closed \cemph{line segment} connecting them is given by $[x,y]$ where we replace the brackets by parentheses if we wish to exclude the respective endpoint from the line segment.
For $a \in \R^n \setminus \setn{0}$ and $\beta \in \R$, the \cemph{hyperplane} $H_{(a,\beta)}$ is given by $\setcond{x \in \R^n}{a^\top x = \beta}$.
We denote by $H_{(a,\beta)}^\leq \defeq \setcond{x \in \R^n}{a^\top x \leq \beta} $ and $H_{(a,\beta)}^\geq \defeq \setcond{x \in \R^n }{ a^\top x \geq \beta}$ the \cemph{halfspaces} bounded by $H_{(a,\beta)}$.
A halfspace $H^\leq \subset \R^n$ \cemph{supports} a set $X \subset \R^n$ at $x \in X$ if $X \subset H^\leq$ and $x \in \bd(H^\leq)$,
and a hyperplane $H$ supports $X$ at $x$ if one of the halfspaces bounded by $H$ does.
We write $h_X(a) \defeq \sup\setcond{ a^\top x }{ x \in X}$ for the \cemph{support function} of $X$.
The \cemph{polar set} of $X$ is defined as $X^\circ \defeq \setcond{ a \in \R^n }{ h_X(a) \leq 1 }$.
We write $\vol(X)$ for the $n$-dimensional Lebesgue measure (volume) of $X$ if $X$ is measurable.

By a \cemph{triangle} and a \cemph{parallelogram}, we always refer to non-degenerate convex polygons, i.e., their vertices are not elements of a single straight line.
The \cemph{bisector} of an interior angle of a triangle is always the one that has non-empty intersection with the interior of the polygon.

\subsection{Convex bodies, radii, and optimal containment}
We denote by $\CK^n$ the family of \cemph{convex bodies} in $\R^n$, i.e., compact convex sets with nonempty interior.
For $K,L \in \CK^n$, their \cemph{Hausdorff distance} is given by $d_H(K,L) \defeq \inf \setcond{ \lambda > 0 }{ K \subset L + \lambda \B_2^n\text{ and } L \subset K + \lambda \B_2^n}$.
By \cite[Lemma~1.8.14]{Schneider2014}, we equivalently have
$d_H(K,L) =\max_{\gauge{u}=1}\abs{h_K(u) - h_L(u)}$.
The point-wise inequality $h_K(x) \leq h_L(x) + \max_{\gauge{u}=1}\abs{h_K(u) - h_L(u)} h_{\B_2^n}(x) = h_{L + d_H(K,L) \B_2^n}(x)$ for all $x\in\R^n$ thus implies $K \subset L + d_H(K,L) \B_2^n$, so the Hausdorff distance is always attained as a minimum.

For $K,C \in \CK^n$,
the \cemph{circumradius} and the \cemph{inradius} of $K$ with respect to $C$ are defined as 
\begin{equation*}R(K,C)\defeq \inf\setcond{\lambda>0}{K\subset_t \lambda C} \quad \text{and} \quad r(K,C)\defeq \sup\setcond{\lambda>0}{\lambda C\subset_t K},
\end{equation*}
respectively.
The $j$th Minkowski chirality can be written with the help of the circumradius as 
\begin{equation}\label{eq:chirality}
   \alpha_j(K)\defeq \inf\setcond{R(K,\Phi_U(K))}{U\subset\R^n\text{ affine subspace, }\dim(U)=j}. \end{equation}

If $K=-K$ and $C=-C$, 
translations can be omitted in the definition of circum- and inradius, i.e., 
\begin{equation*}
R(K,C)=\inf\setcond{\lambda>0}{K\subset \lambda C} \quad \text{and} \quad r(K,C)=\inf\setcond{\lambda>0}{\lambda C\subset K}.
\end{equation*}

The following result, taken from \cite[Theorem~2.3]{BrandenbergKonig2013}, helps with checking whether a containment $K\subset C$ is \cemph{optimal}, meaning that additionally $R(K,C)=1$.

\begin{proposition}\label{thm:brandenberg-koenig}
Let $K,C \in \CK^n$. Then $K$ is optimally contained in $C$
if and only if \begin{enumerate}[label={(\roman*)},leftmargin=*,align=left,noitemsep]
\item{$K\subset C$, and}
\item{there exist some $a^1,\ldots,a^m\in\R^n\setminus\setn{0}$ such that $h_K(a^i) = h_C(a^i)$ for $i\in\setn{1,\ldots,m}$ and $0\in\conv\setn{a^1,\ldots,a^m}$.}
\end{enumerate}
\end{proposition}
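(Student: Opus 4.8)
The plan is to prove both implications through the support functions, using that $K\subset C$ is equivalent to the pointwise inequality $h_K\le h_C$ and that the directions in condition~(ii) are precisely the unit vectors $a$ at which this inequality is tight, i.e.\ the contact normals of the inclusion. First I would normalize by translating $K$ and $C$ simultaneously by a common vector, which leaves the inclusion $K\subset C$, the value $R(K,C)$, and every difference $h_C(a)-h_K(a)$ unchanged, hence also the full set of contact normals and condition~(ii) itself. After this reduction I may assume $0\in\inte(C)$, so that $h_C(a)>0$ for all $a\neq 0$ and, by compactness of the unit sphere, $h_C\ge\delta_0>0$ there.

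The key reformulation is a Gordan-type separation statement. Writing $\tilde N\defeq\setcond{a}{\gauge{a}=1,\ h_K(a)=h_C(a)}$ for the (compact) set of unit contact normals, condition~(ii) holds if and only if there is \emph{no} direction $v\in\R^n$ with $a^\top v>0$ for every $a\in\tilde N$. Indeed, by Carathéodory and the positive homogeneity of $h_K,h_C$, condition~(ii) is equivalent to $0\in\conv(\tilde N)$; since $\tilde N$ is compact, $\conv(\tilde N)$ is compact, so if $0\notin\conv(\tilde N)$ a separating hyperplane yields such a $v$, while conversely any such $v$ forces every convex combination of elements of $\tilde N$ to have positive inner product with $v$ and hence to be nonzero.

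For the direction \enquote{optimal $\Rightarrow$ (ii)} I would argue by contraposition using this $v$: translating $K$ slightly in direction $v$ while shrinking $C$ should produce a strictly smaller valid containment. Concretely, for small $\eps>0$ I consider the ratio $F_\eps(a)\defeq(h_K(a)-\eps\, v^\top a)/h_C(a)$ over unit vectors $a$ and aim to show $\max_a F_\eps(a)<1$; then $K\subset(\max_a F_\eps(a))\,C+\eps v$ gives $R(K,C)<1$, contradicting optimality. The main obstacle is exactly this uniformity. Away from $\tilde N$ one has $h_C-h_K\ge\eta>0$, so $F_\eps<1$ once $\eps$ is small relative to $\eta$; near $\tilde N$ one uses that $v^\top a$ is bounded below by a positive constant on $\tilde N$ and, by a compactness argument, on a whole relative neighborhood of $\tilde N$ in the sphere, which again forces $F_\eps<1$. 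Splitting the sphere into these two regions and choosing $\eps$ small enough delivers the uniform bound.

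For the converse \enquote{(i) and (ii) $\Rightarrow$ optimal} the argument is short. Since $K\subset C$ we have $R(K,C)\le 1$, and if $R(K,C)<1$ there would exist $\lambda<1$ and $z\in\R^n$ with $K\subset\lambda C+z$, i.e.\ $h_K(a)\le\lambda h_C(a)+z^\top a$ for all $a$. Evaluating at the contact normals $a^i$ from~(ii), where $h_K(a^i)=h_C(a^i)$, gives $(1-\lambda)h_C(a^i)\le z^\top a^i$; taking the convex combination with weights $\mu_i\ge 0$, $\sum_i\mu_i=1$, $\sum_i\mu_i a^i=0$ supplied by~(ii) yields $(1-\lambda)\sum_i\mu_i h_C(a^i)\le z^\top 0=0$. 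Since $1-\lambda>0$ and each $h_C(a^i)>0$ while the $\mu_i$ are nonnegative and sum to one, the left-hand side is strictly positive, a contradiction. Hence $R(K,C)=1$ and $K$ is optimally contained in $C$.
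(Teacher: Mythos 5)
Your proposal is correct, but note that the paper itself contains no proof of this statement: \cref{thm:brandenberg-koenig} is imported verbatim from the cited reference (Brandenberg--K\"onig, Theorem~2.3), so there is no in-paper argument to compare against. Your argument is a complete, self-contained proof and follows what is essentially the standard route for such optimal-containment criteria. The two pillars both check out: (a) the reduction of condition~(ii) to $0\in\conv(\tilde N)$ for the compact set $\tilde N$ of \emph{unit} contact normals is legitimate, since positive homogeneity of support functions lets you rescale the $a^i$ to unit vectors without destroying the convex combination representing $0$, and Carath\'eodory plus strict separation of the compact convex set $\conv(\tilde N)$ from the origin gives the Gordan-type dichotomy; (b) the quantitative shrinking step is sound, because after the common translation one has $h_C\geq\delta_0>0$ and $h_C\leq M<\infty$ on the sphere, so the bound $F_\eps\leq 1-\eps\delta/(2M)$ on the neighborhood $W=\setcond{a}{\gauge{a}=1,\ v^\top a>\delta/2}$ of $\tilde N$ and the bound $F_\eps\leq 1-(\eta-\eps\gauge{v})/M$ on the compact complement combine into a uniform bound $\max F_\eps<1$, yielding $K\subset(\max F_\eps)C+\eps v$ and hence $R(K,C)<1$. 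The sufficiency direction is the standard convex-combination contradiction and is airtight once $h_C(a^i)>0$ is secured by the normalization $0\in\inte(C)$. One edge case you pass over silently but which causes no harm: if $\tilde N=\emptyset$, your separation criterion is vacuously satisfied by any $v$, and the argument degenerates to the single estimate $h_C-h_K\geq\eta>0$ on the whole sphere, which again gives $R(K,C)<1$; it would be worth one sentence to make this explicit. What the paper's citation buys is brevity; what your proof buys is self-containedness, at the cost of the compactness bookkeeping you correctly identified as the main obstacle.
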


By \cref{thm:brandenberg-koenig}, $K \subset C$ is optimal for triangles if and only if every edge of $C$ contains a vertex of $K$, and for parallelograms if and only if $C$ has two opposite edges each containing a vertex of $K$.

As in the introduction, the \cemph{John ellipsoid} $\John(K)$ of a convex body $K \subset \R^n$ is the unique volume-maximal ellipsoid contained in $K$, see \cite[Theorem~11.1]{Gruber2007}.
The John ellipsoid is affine equivariant,  meaning that for any convex body $K \in \CK^n$, any invertible linear transformation $A \in \gl(\R^n)$, and any translation vector $b \in \R^n$, we have $ \John(A(K) + b) = A(\John(K)) + b$, see \cite[Section~8.4.3]{BoydVandenberghe2004}.
By John's theorem \cite[Theorem~11.2]{Gruber2007}, the Euclidean ball $\B_2^n$ is the John ellipsoid of the cube $[-1,1]^n$.
Since affine transformations preserve midpoints and parallelism of hyperplanes, the John ellipsoid of a parallelotope is the ellipsoid that is tangent to all facets of the parallelotope at their midpoints.

\subsection{Grassmanian, reflections}

For $j\in\setn{0,\ldots,n}$, we denote by $\gr{j}{\R^n}$ the \cemph{Grassmannian}, i.e., the set of all $j$-dimensional linear subspaces of $\R^n$.
This set is topologized as a quotient space of the Stiefel manifold and as such it is homeomorphic to the set
\begin{equation*}
\setcond{M\in\R^{n\times n}}{M=M^\top =M^2,\trace(M)=j}
\end{equation*}
of trace-$j$ symmetric idempotent real $n\times n$-matrices equipped with the subspace topology of $\R^{n\times n}$, see \cite[\textsection~5]{MilnorSt1974}.
These matrices are precisely the \cemph{orthoprojectors} $P_U:\R^n\to\R^n$ onto $j$-dimensional subspaces $U$ of $\R^n$, where $P_U(x)\in \R^n$ is for $x \in \R^n$ uniquely determined by the conditions $P_U(x)\in U$ and $\gauge{P_U(x) - x}\leq\gauge{z-x}$ for all $z\in U$.
Since the map $U\mapsto P_U$ is bijective and the set of trace-$j$ symmetric idempotent real $n\times n$-matrices is a closed subset of the unit sphere of the spectral norm $\gauge{\cdot}$ on $\R^{n\times n}$, we see that $d_G(U,V)\defeq \gauge{P_U-P_V}$ defines a metric on $\gr{j}{\R^n}$, and that $(\gr{j}{\R^n},d_G)$ is a compact metric space.

For $U \subset \R^n$ an affine subspace, the reflection across $U$ is given by $\Phi_U: \R^n \to \R^n$, $\Phi_U(x)=2P_U(x)-x$.
Note that for $x\in\R^n$, we have
\begin{equation}\label{eq:reflection-calculus}
\Phi_U(x)=\Phi_{U-U}(x)+2P_U(0).
\end{equation}

\section{Basic properties}\label{chap:basic-properties}

This section is devoted to verifying some basic properties of the Minkowski chiralities that Grünbaum \cite{Grunbaum1963} demanded of any (a)symmetry measure.
In particular, we show their similarity invariance and Hausdorff continuity.
We also obtain results on ratios of certain Minkowski chiralities, which prepares us for the proofs of the upper bounds on the chiralities in the next section.

\subsection{Similarity invariance}
We begin this subsection with a slight simplification of the definition of the Minkowski chirality.
The translation invariance of the circumradius together with \eqref{eq:reflection-calculus} shows for $K \in \CK^n$ and an affine subspace $U$ of $\R^n$ of dimension $j\in\setn{0,\ldots,n}$ that $R(K,\Phi_U(K)) = R(K,\Phi_{U-U}(K))$.
Taking the infimum over the $j$-dimensional affine subspaces $U$ in \eqref{eq:chirality}, we get 
\begin{equation}\label{eq:chirality-linear}
    \alpha_j(K)= \inf\setcond{R(K,\Phi_U(K))}{U\in\gr{j}{\R^n}}.
\end{equation}

Next, we show that the Minkowski chiralities are similarity invariant. 

\begin{lemma}\label{thm:similarity-invariance}
Let $K \in\CK^n$ and $j \in \setn{0, \ldots, n}$.
Then $\alpha_j(f(K))=\alpha_j(K)$ for any similarity transform $f:\R^n\to\R^n$. \end{lemma}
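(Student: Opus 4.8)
The plan is to use the reformulation \eqref{eq:chirality-linear}, which expresses $\alpha_j$ as an infimum over \emph{linear} subspaces $U \in \gr{j}{\R^n}$, so that the relevant reflections $\Phi_U(x) = 2P_U(x) - x$ are linear maps (being built from the orthoprojector $P_U$). Writing the similarity transform as $f(x) = rAx + b$ with $r > 0$, $b \in \R^n$, and $A$ orthogonal, I would split the argument into a translation--dilation part and an orthogonal part, each reduced to an invariance of the circumradius that is immediate from the definition $R(X, C) = \inf\setcond{\lambda > 0}{X \subset_t \lambda C}$: since $\subset_t$ allows a free translation of either body and a common dilation factor cancels, one has $R(rX + z, rC + w) = R(X, C)$ for all $z,w\in\R^n$; and since $\subset_t$ is preserved when an invertible linear map is applied to both bodies, one has $R(MX, MC) = R(X, C)$ for every $M \in \gl(\R^n)$.

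For the translation--dilation part I would keep the subspace $U$ fixed. Linearity of $\Phi_U$ yields $\Phi_U(rA(K) + b) = r\Phi_U(A(K)) + \Phi_U(b)$, so the first invariance above gives $R(rA(K) + b, \Phi_U(rA(K) + b)) = R(A(K), \Phi_U(A(K)))$ for each $U$; taking the infimum over $U$ in \eqref{eq:chirality-linear} then produces $\alpha_j(f(K)) = \alpha_j(A(K))$.

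The main step is the orthogonal part, where the subspace must be allowed to move. Here I would first verify that $P_{A(U)} = A P_U A^\top$ for orthogonal $A$: the right-hand side maps $\R^n$ into $A(U)$, and using $\langle Aw, Av \rangle = \langle w, v \rangle$ one checks that $y - A P_U A^\top y$ is orthogonal to $A(U)$, so it is indeed the orthoprojector onto $A(U)$. Consequently $\Phi_{A(U)}(Ax) = A\Phi_U(x)$, equivalently $\Phi_U(A(K)) = A\bigl(\Phi_{A^\top(U)}(K)\bigr)$. Combining this with $R(A(K), A(C)) = R(K, C)$ gives $R(A(K), \Phi_U(A(K))) = R(K, \Phi_{A^\top(U)}(K))$. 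Since $A$ is orthogonal, $U \mapsto A^\top(U)$ is a bijection of $\gr{j}{\R^n}$, so reindexing the infimum in \eqref{eq:chirality-linear} yields $\alpha_j(A(K)) = \alpha_j(K)$, and the chain of equalities completes the proof.

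I expect the only genuine obstacle to be this orthogonal case, specifically the commutation identity $P_{A(U)} = A P_U A^\top$ together with the observation that orthogonality is essential: a general $M \in \gl(\R^n)$ does not conjugate reflections to reflections, because $\Phi_U$ depends on the Euclidean metric through $P_U$. Everything else is a bookkeeping reduction to the two elementary invariances of $R$ noted above.
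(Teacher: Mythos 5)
Your proposal is correct and follows essentially the same route as the paper: both reduce to the linear-subspace formulation \eqref{eq:chirality-linear}, handle the translation (and dilation) part with the subspace fixed via elementary invariances of $R$, and handle the orthogonal part by conjugating the reflection ($\Phi_U \circ A = A \circ \Phi_{A^\top(U)}$) and reindexing the infimum over the Grassmannian. The only cosmetic difference is the grouping (the paper splits as translation plus linear similarity, you split as translation--dilation plus orthogonal map) and that you spell out the projector identity $P_{A(U)} = A P_U A^\top$, which the paper uses implicitly.
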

\begin{proof}
The general case is a composition of the cases of translations and linear similarity transforms.

First, assume that $f$ is a translation, i.e., there exists $b\in\R^n$ such that $f(x)=x+b$ for all $x\in\R^n$.
Using the linearity of $\Phi_U$ for $U \in \gr{j}{\R^n}$ and the translation invariance of the circumradius, 
\begin{equation*}
    R(K+b,\Phi_U(K+b))
    = R(K+b,\Phi_U(K)+\Phi_U(b))
    = R(K,\Phi_U(K)).
\end{equation*}

Now take the infimum over $U\in\gr{j}{\R^n}$ to obtain $\alpha_j(K+b)=\alpha_j(K)$ from \eqref{eq:chirality-linear}.

Second, assume that the similarity transform $f:\R^n\to\R^n$ is linear.
Since $f$ is invertible, the containment $f(K)\subset x+\lambda C$ for $x \in \R^n$ and $\lambda > 0$ is equivalent to $K\subset f^{-1}(x)+\lambda f^{-1}(C)$.
It follows that $R(f(K),C)=R(K,f^{-1}(C))$.
Now, for $U \in \gr{j}{\R^n}$, consider
\begin{equation*}
    R(f(K),\Phi_U(f(K)))=R(K,f^{-1}(\Phi_U(f(K))))=R(K,\Phi_{f^{-1}(U)}(K))
\end{equation*}
and take the infimum over $U\in\gr{j}{\R^n}$ to obtain $\alpha_j(f(K))=\alpha_j(K)$ from \eqref{eq:chirality-linear}.
Note that when $U$ traverses $\gr{j}{\R^n}$, then so does $f^{-1}(U)$.
\end{proof}
Let us remark that the Minkowski asymmetry $\alpha_0$ enjoys an even stronger invariance property, namely the invariance under invertible affine transformations, i.e., $\alpha_0(A(K)+b)=\alpha_0(K)$ 
for all $K\in\CK^n$, $b\in\R^n$, and $A\in \gl(\R^n)$.
This is evident from the equivalence of $K\subset_t \lambda (-K)$ and $A(K)\subset_t \lambda (-A(K))$ for $\lambda >0$.
In contrast, the $j$th Minkowski chirality $\alpha_j$ is not affinely invariant when $j\neq 0$.
For instance, a rectangle $K$ satisfies $\alpha_1(K)=1$, but there exist affine images $A(K)$ with $\alpha_j(A(K))>1$, namely when $A(K)$ is a parallelogram that is neither a rectangle nor a rhombus.

\subsection{Existence of optimal subspaces and continuity}
In this subsection, we prove that $\alpha_j$ is Hausdorff continuous and that the infimum in \eqref{eq:chirality-linear} is attained by some subspace $U^\ast\in\gr{j}{\R^n}$.

We start with some preparatory results regarding the continuity of the circumradius $R:\CK^n\times \CK^n\to \R$ and the map $\Phi:\CK^n\times \gr{j}{\R^n}\to \CK^n$, $(K,U)\mapsto\Phi_U(K)$.
These maps are defined on the Cartesian products $\CK^n\times\CK^n$ and $\CK^n\times \gr{j}{\R^n}$, on which we consider the metrics $d_H+d_H$ and $d_H+d_G$ given by
$(d_H+d_H)((K^1,C^1),(K^2,C^2))\defeq d_H(K^1,K^2)+d_H(C^1,C^2)$ and $(d_H+d_G)((K^1,U^1),(K^2,U^2))\defeq d_H(K^1,K^2)+d_G(U^1,U^2)$ for $U^1,U^2\in\gr{j}{\R^n}$ and $K^1,K^2,C^1,C^2\in\CK^n$.

\begin{lemma}\label{thm:circumradius-continuous}
    The map $R:\CK^n\times \CK^n\to \R$ is continuous.
\end{lemma}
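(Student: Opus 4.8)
The plan is to prove a quantitative, one-sided Lipschitz-type estimate for $R$ and then symmetrize it. Throughout I will use the inclusion characterization of the Hausdorff distance already recorded above, namely that $d_H(K^1,K^2)\le\delta$ is equivalent to $K^1\subset K^2+\delta\B_2^n$ and $K^2\subset K^1+\delta\B_2^n$. The key geometric ingredient is an absorption lemma: if $C\in\CK^n$ and $\rho\defeq r(C,\B_2^n)>0$, so that $\rho\B_2^n+c\subset C$ for some $c\in\R^n$, then for every $\mu\ge 0$ one has $C+\mu\B_2^n\subset_t(1+\mu/\rho)\,C$. Indeed, $\B_2^n\subset\frac1\rho(C-c)$ gives $C+\mu\B_2^n\subset C+\frac{\mu}{\rho}(C-c)=(1+\frac{\mu}{\rho})C-\frac{\mu}{\rho}c$, where I used $aC+bC=(a+b)C$ for $a,b\ge 0$ by convexity of $C$; the right-hand side is a translate of $(1+\mu/\rho)C$.

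Next I chain inclusions. Fix $(K,C)\in\CK^n\times\CK^n$ and let $(K',C')$ be nearby, writing $\delta_1\defeq d_H(K,K')$, $\delta_2\defeq d_H(C,C')$, $\lambda\defeq R(K,C)$, and $\rho'\defeq r(C',\B_2^n)$. Since the circumradius is attained, $K\subset z+\lambda C$ for some $z\in\R^n$, and together with $K'\subset K+\delta_1\B_2^n$ and $C\subset C'+\delta_2\B_2^n$ this yields $K'\subset z+\lambda C'+(\lambda\delta_2+\delta_1)\B_2^n$. Applying the absorption lemma to $\lambda C'$, whose inradius equals $\lambda\rho'$, with $\mu=\lambda\delta_2+\delta_1$, I obtain $K'\subset_t\bigl(\lambda+\tfrac{\lambda\delta_2+\delta_1}{\rho'}\bigr)C'$ and hence
\[ R(K',C')\le R(K,C)+\frac{R(K,C)\,d_H(C,C')+d_H(K,K')}{r(C',\B_2^n)}. \]

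To convert this into continuity I need the denominator to stay bounded away from $0$ near $(K,C)$. As $C$ has nonempty interior, $r(C,\B_2^n)>0$, and the inradius is $1$-Lipschitz in $d_H$: from $\rho\B_2^n+c\subset C\subset C'+\delta_2\B_2^n$ and $\rho\B_2^n=(\rho-\delta_2)\B_2^n+\delta_2\B_2^n$, Rådström cancellation gives $(\rho-\delta_2)\B_2^n+c\subset C'$, so $r(C',\B_2^n)\ge r(C,\B_2^n)-\delta_2$. Thus for $(K',C')$ in a small neighborhood of $(K,C)$ the displayed bound forces $R(K',C')\le R(K,C)+1$ and $r(C',\B_2^n)\ge r(C,\B_2^n)/2$, which in particular bounds $R$ locally. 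Swapping the roles of the primed and unprimed bodies yields the reverse estimate with $R(K',C')$ on the right controlled by the local bound, and letting $\delta_1,\delta_2\to 0$ gives $R(K',C')\to R(K,C)$.

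I expect the main obstacle to be the asymmetry of $C$: because $C$ need not be centrally symmetric, the containments defining $R$ genuinely require translations, and a ball can only be absorbed into a \emph{dilate} of $C$ at a cost governed by the inradius $r(C,\B_2^n)$. Controlling this factor uniformly on a neighborhood of $(K,C)$ — equivalently, the lower bound on $r(C',\B_2^n)$ — is the delicate point, and it is exactly what the Lipschitz continuity of the inradius in the last paragraph provides.
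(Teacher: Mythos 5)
Your proof is correct, and while it rests on the same two elementary ingredients as the paper's proof --- absorbing Hausdorff-error balls into dilates of a body with nonempty interior, and R\aa dstr\"om cancellation --- it organizes them in a genuinely different way. The paper first translates so that the origin is interior to both $K$ and $C$ with a common radius $r$ satisfying $r\B_2^n \subset K$ and $r\B_2^n \subset C$, converts Hausdorff proximity into the two-sided inclusions $\left(1-\frac{\delta_K}{r}\right)K \subset K^i \subset \left(1+\frac{\delta_K}{r}\right)K$ (the inner one via cancellation) and likewise for $C^i$, and then sandwiches $R(K^i,C^i)$ multiplicatively between $\frac{1-\delta_K/r}{1+\delta_C/r}R(K,C)$ and $\frac{1+\delta_K/r}{1-\delta_C/r}R(K,C)$ by monotonicity and homogeneity of $R$ in each argument. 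You avoid any such normalization: your absorption lemma is translation-covariant, it yields the one-sided additive estimate $R(K',C')\le R(K,C)+\bigl(R(K,C)\,d_H(C,C')+d_H(K,K')\bigr)/r(C',\B_2^n)$, and you spend cancellation on a different auxiliary fact --- the $1$-Lipschitz continuity of $r(\cdot,\B_2^n)$ --- to keep the modulus bounded below near $(K,C)$, after which symmetrization (using the local boundedness of $R$ that your own first estimate supplies) finishes the proof. The paper's normalization buys brevity and symmetry and dispenses with the inradius-stability lemma; your route buys an explicit local Lipschitz-type modulus for $R$, expressed in terms of $R(K,C)$ and the inradius of the second body, which is slightly stronger than bare continuity and reusable elsewhere. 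One cosmetic point: you invoke attainment of the circumradius ($K\subset z+\lambda C$ with $\lambda=R(K,C)$) and of the inradius without proof; both are standard compactness facts, and your argument would in any case survive with $\lambda=R(K,C)+\eps$ followed by $\eps\to 0$, so this is not a gap.
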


The proof is based on \cite[Proposition~1.2.1]{Gonzalez2013} about the Hausdorff continuity of $R(\cdot,\B_2^n)$ on $\CK^n$.

\begin{proof}
Let $((K^i,C^i))_{i\in\N}$ be a convergent sequence in the metric space $(\CK^n\times \CK^n,d_H+d_H)$ with limit $(K,C)\in\CK^n\times\CK^n$.
Then $(K^i)_{i\in\N}$, $(C^i)_{i\in\N}$ are convergent sequences in $(\CK^n,d_H)$ with limits $K$ and $C$, respectively.
By the translation invariance of the circumradius, we may assume that $0\in\inte(K)$ and $0\in\inte(C)$, i.e., there exists $r>0$ such that $r\B_2^n \subset K$ and $r\B_2^n \subset C$.
Fix $i\in\N$.
Then 
\begin{align*}
    K^i
    & \subset K+d_H(K^i,K)\B_2^n
    \subset K+\frac{d_H(K^i,K)}{r}K
    = \left( 1+\frac{d_H(K^i,K)}{r} \right)K
    \\
\intertext{and, analogously,}
    C^i
    & \subset C+(d_H(C^i,C))\B_2^n
    \subset C+\frac{d_H(C^i,C)}{r}C
    = \left( 1+\frac{d_H(C^i,C)}{r} \right)C.
\end{align*}
For $i$ sufficiently large, we have $d_H(K^i,K)<r$ and $d_H(C^i,C)<r$, so
\begin{align*}
    \left( 1-\frac{d_H(K^i,K)}{r} \right)K + \frac{d_H(K^i,K)}{r}K
    = K
    \subset K^i+d_H(K^i,K)\B_2^n
    & \subset K^i+\frac{d_H(K^i,K)}{r}K
    \\
\intertext{and}
    \left( 1-\frac{d_H(C^i,C)}{r} \right)C + \frac{d_H(C^i,C)}{r}C
    = C
    \subset C^i+d_H(C^i,C)\B_2^n
    & \subset C^i+\frac{d_H(C^i,C)}{r}C.
\end{align*}
Together with the cancellation property, cf.\ \cite[p.~48]{Schneider2014}, we conclude
\begin{equation*}
    \left( 1-\frac{d_H(K^i,K)}{r} \right)K
    \subset K^i
        \quad \text{and} \quad
    \left( 1-\frac{d_H(C^i,C)}{r} \right)C
    \subset C^i.
\end{equation*}
Thus, the monotonicity of the circumradius under set inclusions gives
\begin{equation*}
  \frac{\left( 1-\frac{d_H(K^i,K)}{r} \right)}{\left( 1+\frac{d_H(C^i,C)}{r} \right)}R(K,C)
  \leq R(K^i,C^i)
  \leq \frac{\left( 1+\frac{d_H(K^i,K)}{r} \right)}{\left( 1-\frac{d_H(C^i,C)}{r} \right)} R(K,C).
\end{equation*}
This shows that $\lim_{i\to\infty} R(K^i,C^i)=R(K,C)$.
\end{proof}

Next, we show that the reflection of convex bodies across subspaces is jointly continuous with respect to the two input variables.

\begin{lemma}\label{lem:reflection-continuous}
Let $j\in\setn{0,\ldots,n}$.
The map $\Phi:\CK^n\times \gr{j}{\R^n}\to \CK^n$, $(K,U)\mapsto\Phi_U(K)$ is continuous.
\end{lemma}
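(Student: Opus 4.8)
The plan is to show sequential continuity of $\Phi$ on the product metric space $(\CK^n\times\gr{j}{\R^n},d_H+d_G)$, which suffices since both factors are metric spaces. So I would take a convergent sequence $((K^i,U^i))_{i\in\N}$ with limit $(K,U)$, meaning $d_H(K^i,K)\to 0$ and $d_G(U^i,U)=\gauge{P_{U^i}-P_U}\to 0$ in the spectral norm, and aim to prove $d_H(\Phi_{U^i}(K^i),\Phi_U(K))\to 0$. The natural tool is the support-function description of the Hausdorff distance, $d_H(A,B)=\max_{\gauge{u}=1}\abs{h_A(u)-h_B(u)}$, recalled earlier in the excerpt.

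The key computational step is to understand how the support function transforms under reflection. Since $\Phi_U$ is (for linear $U$) an orthogonal linear map, namely $\Phi_U=2P_U-I$, and orthogonal maps satisfy $h_{A(X)}(u)=h_X(A^\top u)$, I would write
\begin{equation*}
    h_{\Phi_{U^i}(K^i)}(u)=h_{K^i}(\Phi_{U^i}^\top u)=h_{K^i}(\Phi_{U^i}(u)),
\end{equation*}
using that $\Phi_{U^i}$ is symmetric. Then I split the difference via the triangle inequality:
\begin{equation*}
    \abs{h_{K^i}(\Phi_{U^i}(u))-h_K(\Phi_U(u))}\leq \abs{h_{K^i}(\Phi_{U^i}(u))-h_K(\Phi_{U^i}(u))}+\abs{h_K(\Phi_{U^i}(u))-h_K(\Phi_U(u))}.
\end{equation*}
The first term is bounded uniformly in $u$ with $\gauge{u}=1$ by $\max_{\gauge{v}=1}\abs{h_{K^i}(v)-h_K(v)}=d_H(K^i,K)$, because $\Phi_{U^i}(u)$ is again a unit vector ($\Phi_{U^i}$ being orthogonal); this term vanishes by the $d_H$-convergence of $K^i$. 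For the second term I would use that $h_K$ is Lipschitz on the unit sphere with constant $R(K,\B_2^n)$ (or simply that $h_K$ is Lipschitz with constant $\max_{x\in K}\gauge{x}$), so it is bounded by a constant times $\gauge{\Phi_{U^i}(u)-\Phi_U(u)}=\gauge{(\Phi_{U^i}-\Phi_U)(u)}\leq\gauge{\Phi_{U^i}-\Phi_U}=2\gauge{P_{U^i}-P_U}=2\,d_G(U^i,U)$, which vanishes by the $d_G$-convergence. Taking the maximum over $\gauge{u}=1$ and then the limit gives $d_H(\Phi_{U^i}(K^i),\Phi_U(K))\to 0$.

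The main obstacle I anticipate is organizing the uniform-in-$u$ estimates cleanly: one must be careful that the Lipschitz constant for $h_K$ is a fixed finite number independent of $i$ (it depends only on the limit $K$, which is fine), and that the substitution $v=\Phi_{U^i}(u)$ genuinely preserves the unit sphere so that the first term is controlled by $d_H(K^i,K)$ rather than by an unbounded supremum. A minor point is the passage from linear $U\in\gr{j}{\R^n}$ back to affine reflections, but the reduction in \eqref{eq:reflection-calculus} and the fact that the relevant maps here are the linear reflections $\Phi_U=2P_U-I$ make this harmless. I would also note that $\Phi_U(K)\in\CK^n$ (it is the image of a convex body under an invertible linear map), so the codomain is correct. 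Everything else is a routine combination of the two convergences.
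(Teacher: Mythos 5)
Your proof is correct, but it handles the key technical step by a genuinely different route than the paper. Both arguments use the same two-term decomposition — a term in which only the body changes and a term in which only the subspace changes: your split $\abs{h_{K^i}(\Phi_{U^i}(u))-h_K(\Phi_{U^i}(u))}+\abs{h_K(\Phi_{U^i}(u))-h_K(\Phi_U(u))}$ mirrors the paper's $d_H(\Phi_{U^i}(K^i),\Phi_{U^i}(K))+d_H(\Phi_{U^i}(K),\Phi_U(K))$, and in both cases the first term equals (or is bounded by) $d_H(K^i,K)$ because $\Phi_{U^i}$ is orthogonal. The difference lies in the second term: the paper proves continuity of $U\mapsto\Phi_U(K)$ for fixed $K$ via the sequential (Kuratowski-type) characterization of Hausdorff convergence from Schneider's Theorem~1.8.8, tracking points $x^i=\Phi_{U^i}(y)$ and limits of convergent subsequences, whereas you stay entirely within the support-function description of $d_H$ and use the Lipschitz bound $\abs{h_K(v)-h_K(w)}\leq \max_{x\in K}\gauge{x}\,\gauge{v-w}$ together with $\gauge{\Phi_{U^i}-\Phi_U}=2d_G(U^i,U)$. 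Your route buys an explicit quantitative estimate, $d_H(\Phi_{U^i}(K^i),\Phi_U(K))\leq d_H(K^i,K)+2\max_{x\in K}\gauge{x}\,d_G(U^i,U)$, i.e., local Lipschitz continuity of $\Phi$ rather than mere continuity, and avoids invoking the external sequential characterization; the paper's route needs no support-function calculus and works directly with points of the sets. One small correction: your parenthetical Lipschitz constant $R(K,\B_2^n)$ is not adequate, since the circumradius is translation-invariant while $h_K$ is not (for $K$ far from the origin the correct constant must grow); your alternative constant $\max_{x\in K}\gauge{x}$ is the right one and is what your estimate actually uses, so the proof stands as written.
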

\begin{proof}

We first show for $K \in \CK^n$ that the map $\Phi_\bullet(K):\gr{j}{\R^n}\to \CK^n$, $U\mapsto\Phi_U(K)$ is continuous.

Let $(U^i)_{i\in\N}$ be a convergent sequence in the metric space $(\gr{j}{\R^n},d_G)$ with limit $U$.
We show 
\begin{equation*}
    \lim_{i\to\infty}d_H(\Phi_{U^i}(K),\Phi_U(K))=0
\end{equation*}
using \cite[Theorem~1.8.8]{Schneider2014}.

First, let $x\in\Phi_U(K)$.
Then there exists $y\in K$ such that $x=\Phi_U(y)$.
For $x^i=\Phi_{U^i}(y)\in \Phi_{U^i}(K)$, 
\begin{equation*}
    \gauge{x^i-x}=\gauge{\Phi_{U^i}(y)-\Phi_U(y)}\leq \gauge{\Phi_{U^i}-\Phi_U}\gauge{y}=2d_G(U^i,U)\gauge{y}\xrightarrow{i\to \infty}0, \end{equation*}
i.e., $x=\lim_{i\to\infty}x^i$.
Second, let $(i_j)_{j\in\N}$ be an increasing sequence in $\N$, and suppose that $x^{i_j}\in \Phi_{U^{i_j}}(K)$ for all $j\in\N$.
Further suppose that $(x^{i_j})_{j\in\N}$ is a convergent sequence whose limit shall be denoted by $x^\ast\in\R^n$.
For all $j\in\N$, there exists $y^{i_j}\in K$ such that $x^{i_j}=\Phi_{U^{i_j}}(y^{i_j})$.
We have
\begin{align*}
    \gauge{y^{i_j}-\Phi_U(x^\ast)}&=\gauge{\Phi_{U^{i_j}}(x^{i_j})-\Phi_U(x^\ast)}\\
    &=\gauge{\Phi_{U^{i_j}}(x^{i_j})-\Phi_{U^{i_j}}(x^\ast)+\Phi_{U^{i_j}}(x^\ast)-\Phi_U(x^\ast)}\\
    &\leq \gauge{\Phi_{U^{i_j}}}\gauge{x^{i_j}-x^\ast}+\gauge{\Phi_{U^{i_j}}-\Phi_U}\gauge{x^\ast}\\
    &= \gauge{x^{i_j}-x^\ast}+2d_G(U^{i_j},U)\gauge{x^\ast}\xrightarrow{j\to \infty}0.
\end{align*}
Thus, the sequence $(y^{i_j})_{j\in\N}$ is a convergent sequence with limit $\Phi_U(x^\ast)$.
Since this is a sequence in the closed set $K$, the limit is an element of $K$, too.
But $\Phi_U(x^\ast)\in K$ is equivalent to $x^\ast\in \Phi_U(K)$.
This completes the proof of the continuity of $\Phi_\bullet(K)$.

Now, let $((K^i,U^i))_{i\in\N}$ be a convergent sequence in the metric space $(\CK^n\times \gr{j}{\R^n},d_H+d_G)$ with limit $(K,U)\in\CK^n\times \gr{j}{\R^n}$.
Then $(K^i)_{i\in\N}$ is a convergent sequence in $(\CK^n,d_H)$ with limit $K$, and $(U^i)_{i\in\N}$ is a convergent sequence in $(\gr{j}{\R^n},d_G)$ with limit $U$.
We see
\begin{align*}
    d_H(\Phi_{U^i}(K^i),\Phi_U(K))&\leq d_H(\Phi_{U^i}(K^i),\Phi_{U^i}(K))+d_H(\Phi_{U^i}(K),\Phi_U(K))\\
    &=d_H(K^i,K)+d_H(\Phi_{U^i}(K),\Phi_U(K))\xrightarrow{i\to \infty}0,
\end{align*}
i.e., we have $\lim_{i\to\infty}\Phi_{U^i}(K^i)=\Phi_U(K)$ in $(\CK^n,d_H)$.
\end{proof}

Lastly, we establish the continuity of $\alpha_j$ and show that the infimum in \eqref{eq:chirality-linear} is attained.
In particular, we verify that any convex body $K$ contains at least one point ($x^\ast$ below) that acts for $\alpha_j$ like a \emph{Minkowski center} would for $\alpha_0$ (see \cite[Definition~$3.1$]{BrandenbergKonig2015}).
This is important for some constructions in the next subsection.

\begin{proposition} \label{prop:chirality-attained}
    Let $K\in\CK^n$ and $j\in\setn{0,\ldots,n}$.
    There exist $U^\ast \in \gr{j}{\R^n}$ and $x^\ast \in K$ such that $K-x^\ast \subset \alpha_j(K) \Phi_{U^\ast}(K-x^\ast)$.
    Moreover, the map $\alpha_j:\CK^n\to\R$ is Hausdorff continuous.
\end{proposition}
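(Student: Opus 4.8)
The plan is to derive everything from the two continuity lemmas already in hand together with the compactness of the Grassmannian, and to pin down the center $x^\ast$ by a contraction–mapping argument. First I would observe that the map $g\colon\CK^n\times\gr{j}{\R^n}\to\R$, $g(K,U)\defeq R(K,\Phi_U(K))$, is continuous: the assignment $(K,U)\mapsto(K,\Phi_U(K))$ is continuous by \cref{lem:reflection-continuous} (the first coordinate being trivially continuous), and postcomposing with the continuous circumradius from \cref{thm:circumradius-continuous} gives $g$. Since $(\gr{j}{\R^n},d_G)$ is compact, for each fixed $K$ the continuous function $U\mapsto g(K,U)$ attains its infimum, so by \eqref{eq:chirality-linear} there is some $U^\ast\in\gr{j}{\R^n}$ with $R(K,\Phi_{U^\ast}(K))=\alpha_j(K)$.

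Next I would produce the point $x^\ast\in K$. Write $\lambda\defeq\alpha_j(K)$; as $\Phi_{U^\ast}$ is a linear isometry, $\Phi_{U^\ast}(K)$ has the same volume as $K$, so $\lambda\geq1$, and the optimal circumradius yields a translate $z_0\in\R^n$ with $K\subset z_0+\lambda\Phi_{U^\ast}(K)\eqdef C$. If $\lambda>1$, I would consider the affine map $F(y)\defeq\tfrac1\lambda\Phi_{U^\ast}(y-z_0)$, whose linear part $\tfrac1\lambda\Phi_{U^\ast}$ has operator norm $\tfrac1\lambda<1$. A short computation using $\Phi_{U^\ast}^2=\mathrm{id}$ gives $F(C)=K$, hence $F(K)\subset F(C)=K$; since $F$ is a contraction of the complete space $\R^n$ that maps the closed set $K$ into itself, its unique fixed point $x^\ast$ lies in $K$. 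The fixed-point equation rearranges to $z_0=(\mathrm{id}-\lambda\Phi_{U^\ast})(x^\ast)$, and substituting this into $K\subset z_0+\lambda\Phi_{U^\ast}(K)$ yields exactly $K-x^\ast\subset\lambda\Phi_{U^\ast}(K-x^\ast)$. If $\lambda=1$, then $K\subset C$ with $\vol(K)=\vol(C)$ forces $K=C=z_0+\Phi_{U^\ast}(K)$; taking $x^\ast$ to be the centroid of $K$, equivariance of the centroid under the affine map $y\mapsto z_0+\Phi_{U^\ast}(y)$ preserving $K$ gives $(\mathrm{id}-\Phi_{U^\ast})(x^\ast)=z_0$, whence $K-x^\ast=\Phi_{U^\ast}(K-x^\ast)$.

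For the continuity of $\alpha_j=\min_U g(\cdot,U)$ I would run the standard argument for parametric minima over a compact index set, splitting into two semicontinuity estimates for $K^i\to K$ in $d_H$. For upper semicontinuity, fix a minimizer $U^\ast$ for $K$ and use $\alpha_j(K^i)\leq g(K^i,U^\ast)\to g(K,U^\ast)=\alpha_j(K)$, so $\limsup_i\alpha_j(K^i)\leq\alpha_j(K)$. For lower semicontinuity, choose a subsequence along which $\alpha_j(K^i)$ converges to $\liminf_i\alpha_j(K^i)$, with minimizers $U^i$, extract a further convergent subsequence $U^{i_k}\to U^\infty$ by compactness of $\gr{j}{\R^n}$, and pass to the limit in $\alpha_j(K^{i_k})=g(K^{i_k},U^{i_k})\to g(K,U^\infty)\geq\alpha_j(K)$. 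Combining the two estimates gives $\lim_i\alpha_j(K^i)=\alpha_j(K)$.

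The main obstacle is ensuring that the center lies in $K$ rather than merely in $\R^n$: the linear system $(\mathrm{id}-\lambda\Phi_{U^\ast})(x^\ast)=z_0$ is uniquely solvable whenever $\lambda\neq1$, but its solution need not a priori belong to $K$. The contraction viewpoint resolves this cleanly, since the inclusion $F(K)\subset K$ confines the fixed point to the closed set $K$. The degenerate case $\lambda=1$, in which the linear part is only an isometry and the contraction argument breaks down, must be handled separately; this is exactly where the volume equality $\vol(K)=\vol(C)$ and the centroid come into play.
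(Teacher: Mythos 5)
Your proposal is correct, and while your treatment of the existence of $U^\ast$ (Weierstrass on the compact Grassmannian via the joint continuity of $(K,U)\mapsto R(K,\Phi_U(K))$) and of the Hausdorff continuity of $\alpha_j$ (two-sided semicontinuity of a parametric minimum over a compact index set) essentially matches the paper's argument, your construction of the center $x^\ast\in K$ takes a genuinely different route. The paper decomposes the translation vector $x=y+z$ into its $U^\ast$- and $(U^\ast)^\perp$-components, solves $(1-\alpha_j(K))y'=y$ and $(1+\alpha_j(K))z'=z$ (with a cancellation argument to show $y=0$ in the degenerate case $\alpha_j(K)=1$), and then needs two hyperplane-separation arguments to move the resulting center into $K$: one showing $\inte(K-x')\cap U^\ast\neq\emptyset$, and one showing for $\alpha_j(K)>1$ that $x'\in K$ by separating $0$ from $(K-x')\cap U^\ast$ inside $U^\ast$. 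Your contraction argument bypasses all of this: since $F(y)=\tfrac{1}{\lambda}\Phi_{U^\ast}(y-z_0)$ satisfies $F(C)=K$ and hence $F(K)\subset K$, the Banach fixed point theorem hands you the center inside the closed set $K$ for free, and the fixed-point identity $z_0=(\mathrm{id}-\lambda\Phi_{U^\ast})(x^\ast)$ is exactly the recentering needed; your handling of $\lambda=1$ (volume forces $K=C$, then equivariance of the centroid under the affine symmetry $y\mapsto z_0+\Phi_{U^\ast}(y)$ of $K$) is likewise self-contained and even yields the stronger conclusion $K-x^\ast=\Phi_{U^\ast}(K-x^\ast)$, an exact reflection symmetry. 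What the paper's route buys is finer information about the set of admissible centers (when $\alpha_j(K)=1$, any shift of the center along $U^\ast$ remains admissible, which is what its proof exploits to land in $K$); what yours buys is brevity and the avoidance of separation theorems entirely. One point worth making explicit in your write-up: the existence of the optimal translate $z_0$ with $K\subset z_0+\lambda\Phi_{U^\ast}(K)$ (attainment of the circumradius as a minimum, not just an infimum) is itself a small compactness fact, which the paper covers by citing a lemma of Brandenberg and König; your phrase \enquote{the optimal circumradius yields a translate} glosses over this, but it is standard and easily justified.
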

\begin{proof}
By \cite[Lemma 2.2]{BrandenbergKonig2015}, we obtain $R(K,\Phi_U(K))<\infty$ for all $U\in\gr{j}{\R^n}$, so $\alpha_j(K)<\infty$.
By \cref{lem:reflection-continuous,thm:circumradius-continuous}, the map $\gr{j}{\R^n}\to\R$, $U\mapsto R(K,\Phi_U(K))$ is a continuous map over a compact domain.
The existence of $U^\ast \in \gr{j}{\R^n}$ with $R(K,\Phi_{U^\ast}(K)) = \alpha_j(K)$ is now a consequence of Weierstrass's theorem, and the continuity of $\alpha_j:\CK^n\to\R$ follows from \cref{lem:reflection-continuous} and the compactness of the metric space $(\gr{j}{\R^n},d_G)$.

Again by \cite[Lemma 2.2]{BrandenbergKonig2015}, there exists some $x \in \R^n$ with $K \subset x + \alpha_j(K) \Phi_{U^\ast}(K)$.
There also exist $y \in U^\ast$ and $z \in (U^\ast)^\perp$ such that $x = y+z$.
Next, we want to choose $y' \in U^\ast$ and $z' \in (U^\ast)^\perp$ with $(1-\alpha_j(K)) y' = y$ and $(1+\alpha_j(K)) z' = z$.
There are clearly no obstructions in doing so when $\alpha_j(K)>1$.
If, on the other hand, we have $\alpha_j(K) = 1$, then
\begin{equation*}
    \Phi_{U^\ast}(K)
    \subset \Phi_{U^\ast}(y + z + \Phi_{U^\ast}(K))
    = y - z + K
\end{equation*}
shows that $K \subset y + z + \Phi_{U^\ast}(K) \subset 2 y + K$.
From the cancellation property, cf.\ \cite[p.~48]{Schneider2014}, we conclude $y=0$.
In other words, also in the case when $\alpha_j(K)=1$, the choice of $y'\in U^\ast$ with $(1-\alpha_j(K)) y' = y$ is possible (and, moreover, any $y'\in U^\ast$ is admissible).
Define $x' \defeq y' + z'$.
Then
\begin{align}
    \alpha_j(K) \Phi_{U^\ast}(K - x')
    & = \alpha_j(K) \Phi_{U^\ast}(K) - \alpha_j(K) y' + \alpha_j(K) z'\nonumber
    \\
    & \supset K - y - z + (1-\alpha_j(K)) y' + (1+\alpha_j(K)) z' - y' - z'\label{eq:existence}
    = K - x'.
\end{align}

It remains to show that $x'$ can be chosen from $K$.
To this end, we first claim that $\inte(K-x') \cap U^\ast \neq \emptyset$.
Towards a contradiction, let us assume that $\inte(K-x') \cap U^\ast =\emptyset$.
Then \cite[Theorem~1.3.8]{Schneider2014} yields the existence of $a \in (U^\ast)^\perp \setminus \setn{0}$ with $h_{K-x'}(a) \leq 0$ such that there exists $v \in K-x'$ with $a^\top v < 0$.
However, we then have
\begin{equation*}
    h_{\alpha_j(K) \Phi_{U^\ast}(K-x')}(-a)
    = \alpha_j(K) h_{K-x'}(\Phi_{U^\ast}(-a))
    = \alpha_j(K) h_{K-x'}(a)
    \leq 0,
\end{equation*}
which contradicts $v \in K-x' \subset \alpha_j(K) \Phi_{U^\ast}(K-x')$.
Therefore, there must exist $u' \in \inte(K-x') \cap U^\ast$.
If now $\alpha_j(K) = 1$, then $x^\ast \defeq x' + u'$ is an element of $K$ and, from the discussion for \eqref{eq:existence}, we see that $x^\ast=(y'+u')+z'\in U^\ast+(U^\ast)^\perp$ satisfies $K-x^\ast\subset \alpha_0(K)\Phi_{U^\ast}(K-x^\ast)$.
If instead $\alpha_j(K) > 1$, then we argue that $x^\ast \defeq x'\in K$.
Towards a contradiction, let us assume that $x'\notin K$.
Then we have $0 \notin K' \defeq (K-x') \cap U^\ast$.
From \cite[Theorem~1.3.4]{Schneider2014}, we conclude that there exists $a \in U^\ast$ such that $h_{K'}(a) < 0$.
Since $K' \subset (\alpha_j(K) \Phi_{U^\ast}(K-x')) \cap U^\ast = \alpha_j(K) K'$, we obtain
\begin{equation*}
    h_{K'}(a)
    \leq h_{\alpha_j(K) K'}(a)
    = \alpha_j(K) h_{K'}(a).
\end{equation*}
However, this contradicts $h_{K'}(a) < 0$ and $\alpha_j(K) > 1$.
\end{proof}

\subsection{Relating chiralities to each other}

The next theorem relates the $j$th Minkowski chiralities of different convex bodies to each other.
It generalizes \cite[Theorem~6.1]{BrandenbergKonig2015}, where the same inequality is shown, but restricted to the Minkowski asymmetry $\alpha_0$.
The ratio between circumradius and inradius, which appears on the right-hand side of our estimate, has been studied in \cite{Toth2013} as a distance measure in the context of the Banach--Mazur distance and the Minkowski asymmetry.
This connection allows us to obtain the general estimates on the Minkowski chiralities in the next section, extending the importance of the following theorem beyond the context of comparing Minkowski chiralities.

\begin{theorem} \label{thm:fejestoth}
Let $K,L \in \CK^n$ and $j \in  \setn{0,\ldots,n}$.
Then
\begin{equation}\label{eq:ft}
    \max \setn{ \frac{\alpha_j(K)}{\alpha_j(L)}, \frac{\alpha_j(L)}{\alpha_j(K)} }   \leq \frac{R(K,L)}{r(K,L)}=\frac{R(L,K)}{r(L,K)}.
\end{equation}
For every $K\in \CK^n$ and $\beta \in [1,\alpha_j(K)]$, there exists $L\in\CK^n$ with $\alpha_j(L)=\beta$ and
equality in \eqref{eq:ft}. \end{theorem}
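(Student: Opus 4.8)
The plan is to treat the inequality \eqref{eq:ft} (together with the symmetry of its right-hand side) and the attainability of equality separately, the latter via an explicit one-parameter construction.

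For the identity $R(K,L)/r(K,L)=R(L,K)/r(L,K)$ I would first record the scaling duality $R(K,L)\,r(L,K)=1$: since $K\subset_t\lambda L$ is equivalent to $\tfrac1\lambda K\subset_t L$, taking the infimum over admissible $\lambda$ on one side and the supremum on the other gives $R(K,L)=1/r(L,K)$, and likewise $R(L,K)=1/r(K,L)$; both quotients then equal $R(K,L)\,R(L,K)$, so the identity follows. For the inequality proper it suffices, by this symmetry, to bound $\alpha_j(K)/\alpha_j(L)$. Writing $\rho=R(K,L)$ and $\iota=r(K,L)$, I would fix a subspace $U^\ast\in\gr{j}{\R^n}$ that is optimal for $L$ — which exists by \cref{prop:chirality-attained}, so that $L\subset_t\alpha_j(L)\,\Phi_{U^\ast}(L)$ — and chain containments. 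Applying the linearity of $\Phi_{U^\ast}$ to $\iota L\subset_t K$ yields $\Phi_{U^\ast}(L)\subset_t\tfrac1\iota\Phi_{U^\ast}(K)$, and combining with $K\subset_t\rho L$ gives
\[
 K\subset_t\rho L\subset_t\rho\,\alpha_j(L)\,\Phi_{U^\ast}(L)\subset_t\tfrac{\rho\,\alpha_j(L)}{\iota}\,\Phi_{U^\ast}(K).
\]
Hence $\alpha_j(K)\le R(K,\Phi_{U^\ast}(K))\le\tfrac{R(K,L)}{r(K,L)}\,\alpha_j(L)$, and exchanging the roles of $K$ and $L$ controls the reciprocal ratio.

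For the equality part the key point is that this bound collapses for a family built directly from the optimal reflection of $K$. After translating so that $0\in K$ and $K\subset\alpha\,\Phi_{U^\ast}(K)$ optimally, with $\alpha=\alpha_j(K)$ and $U^\ast$ from \cref{prop:chirality-attained} (whence also $\Phi_{U^\ast}(K)\subset\alpha K$, as $\Phi_{U^\ast}$ is an involution), I would set, for the prescribed $\beta\in[1,\alpha]$,
\[
 L\defeq K\cap\beta\,\Phi_{U^\ast}(K).
\]
Using that dilations commute with intersections and that $K\subset cK$ for every $c\ge1$, one verifies $\tfrac\beta\alpha K\subset L$ (so $L\in\CK^n$), that $\Phi_{U^\ast}(L)=\Phi_{U^\ast}(K)\cap\beta K$ and hence $L\subset\beta\,\Phi_{U^\ast}(L)$, giving $\alpha_j(L)\le\beta$, and simultaneously $L\subset K\subset\tfrac\alpha\beta L$, giving $r(K,L)\ge1$ and $R(K,L)\le\tfrac\alpha\beta$. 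Substituting the last two bounds into the already-proved inequality \eqref{eq:ft} forces $\tfrac\alpha{\alpha_j(L)}\le\tfrac{R(K,L)}{r(K,L)}\le\tfrac\alpha\beta$, so $\alpha_j(L)\ge\beta$. The two estimates squeeze to $\alpha_j(L)=\beta$, and then every inequality in the chain is an equality; in particular $\tfrac{R(K,L)}{r(K,L)}=\tfrac\alpha\beta=\tfrac{\alpha_j(K)}{\alpha_j(L)}$, which is exactly equality in \eqref{eq:ft}.

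I expect the main obstacle to be locating this construction rather than the verifications, which are short once the involution identity $\Phi_{U^\ast}^2=\mathrm{id}$ and the centering $0\in K$ are used. The pleasant feature is that no continuity or intermediate-value argument is needed: the inequality proved in the first part already supplies the matching lower bound $\alpha_j(L)\ge\beta$, so the same squeeze delivers both the value $\alpha_j(L)=\beta$ and the equality case at once. I would double-check only the degenerate endpoints $\beta=\alpha$ (where $L=K$) and $\beta=1$ (where $L=K\cap\Phi_{U^\ast}(K)$ is $U^\ast$-symmetric) to confirm the construction behaves as intended across the whole range.
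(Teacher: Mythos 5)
Your proof is correct, and it splits into two halves of different character relative to the paper. The inequality half is essentially the paper's argument: the paper also fixes an optimal subspace $U$ for $L$ via \cref{prop:chirality-attained} and chains $K \subset_t R(K,L)\,L \subset_t R(K,L)\,\alpha_j(L)\,\Phi_{U}(L) \subset_t \frac{R(K,L)\,\alpha_j(L)}{r(K,L)}\,\Phi_{U}(K)$, then swaps the roles of $K$ and $L$; your derivation of $R(K,L) = 1/r(L,K)$ merely fleshes out what the paper dismisses as \enquote{direct from the definitions}. The equality half is where you genuinely diverge. The paper translates so that $0 \in K$ and $K \subset \alpha\,\Phi_U(K)$ with $\alpha = \alpha_j(K)$, and then takes the \emph{convex hull}
\begin{equation*}
    L \defeq \conv\big((\beta K) \cup \Phi_U(K)\big),
\end{equation*}
proving $r(L,K) \geq \beta$, $R(L,K) \leq \alpha$, and $\Phi_U(L) \subset \beta L$; you instead take the \emph{intersection} $L \defeq K \cap \beta\,\Phi_{U^\ast}(K)$, proving $r(K,L) \geq 1$, $R(K,L) \leq \alpha/\beta$, and $L \subset \beta\,\Phi_{U^\ast}(L)$. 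These are dual constructions (hull of the two dilates versus intersection of the two dilates), and note that your containment $L \subset \beta\,\Phi_{U^\ast}(L)$ and the paper's $\Phi_U(L) \subset \beta L$ are equivalent statements under the involution. Both arguments rest on the same three pillars: the interior center $x^\ast \in K$ from \cref{prop:chirality-attained} (which makes all the dilation monotonicity arguments $\mu K \subset K \subset \nu K$ for $\mu \leq 1 \leq \nu$ legitimate), the fact that $\Phi_{U^\ast}$ is a linear involution (so it commutes with dilation and, being bijective, with intersections in your case, with unions and hulls in the paper's), and the identical final squeeze in which the already-proved inequality \eqref{eq:ft} forces $\alpha_j(L) \geq \beta$ while the explicit containment forces $\alpha_j(L) \leq \beta$, collapsing the whole chain to equalities. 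Your individual verifications ($\tfrac{\beta}{\alpha}K \subset L$, hence $L \in \CK^n$; $\Phi_{U^\ast}(L) = \Phi_{U^\ast}(K) \cap \beta K$; $L \subset K \subset \tfrac{\alpha}{\beta}L$) are all sound. Neither route buys more strength than the other; the bookkeeping just points in opposite directions, with your $L$ sitting inside $K$ (data $L \subset K \subset \tfrac{\alpha}{\beta}L$) and the paper's $L$ containing $\beta K$ (data $\beta K \subset L \subset \alpha K$).
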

\begin{proof}
The identity $\frac{R(K,L)}{r(K,L)}=\frac{R(L,K)}{r(L,K)}$ is direct from the definitions of $R$ and $r$.
Let $U \in \gr{j}{\R^n}$ with $R(L,\Phi_U(L)) = \alpha_j(L)$ be obtained from \cref{prop:chirality-attained}.
Then $L \subset_t \alpha_j(L) \Phi_U(L)$, and, hence,
\begin{align*}
    K
    & \subset_t R(K,L) L
    \subset_t R(K,L) \alpha_j(L) \Phi_U(L)
    \\
    & \subset_t R(K,L) \alpha_j(L) \Phi_U(r(K,L)^{-1} K)
    = \alpha_j(L) \frac{R(K,L)}{r(K,L)} \Phi_U(K).
\end{align*}
Consequently, we have $R(K,\Phi_U(K))\leq \alpha_j(L) \frac{R(K,L)}{r(K,L)}$.
Now, use \eqref{eq:chirality-linear}
to conclude
\begin{equation*}\alpha_j(K)  \leq \alpha_j(L)\frac{R(K,L)}{r(K,L)}.
\end{equation*}
Reversing the roles of $K$ and $L$ gives 
\begin{equation*}\alpha_j(L)   \leq \alpha_j(K) \frac{R(L,K)}{r(L,K)} = \alpha_j(K)\frac{R(K,L)}{r(K,L)}.
\end{equation*}
Combining those two results yields the claimed inequality.

Let us now turn to the equality case.
According to \cref{prop:chirality-attained}, there exist $U \in \gr{j}{\R^n}$ and $x \in K$ with $K-x \subset \alpha_j(K) \Phi_U(K-x)$.
We define for $\beta \in [1,\alpha_j(K)]$ the convex body
\begin{equation*}
    L \defeq \conv((\beta (K-x)) \cup \Phi_U(K-x)).
\end{equation*}
Then $r(L,K) \geq \beta$ since $\beta (K-x) \subset L$.
Moreover, $0 \in K-x$ and $\alpha_j(K) \geq \beta$ show
\begin{equation*}
    \beta (K-x)
    \subset \alpha_j(K) (K-x),
\end{equation*}
so $L \subset \alpha_j(K) (K-x)$.
We conclude $R(L,K) \leq \alpha_j(K) $ and $\frac{R(K,L)}{r(K,L)}=\frac{R(L,K)}{r(L,K)} \leq \frac{\alpha_j(K) }{\beta}$.
The already proven inequality \eqref{eq:ft} implies $\alpha_j(L) \geq \beta$.
It remains to show $\alpha_j(L) \leq \beta$.
This is immediate from
\begin{equation*}
    \Phi_U(L)
    = \conv( (\beta \Phi_U(K-x)) \cup (K-x) )
    \subset \conv( (\beta^2 (K-x)) \cup (\beta \Phi_U(K-x)) )
    = \beta L,
\end{equation*}
where we used $\beta \geq 1$ and $0 \in K-x$.
\end{proof}

\begin{remark}\label{rem:fejestoth}
It follows from \cref{thm:fejestoth} that $\alpha_j(K)\leq \frac{R(K,L)}{r(K,L)}$ for all $j\in\setn{0,\ldots,n}$ and $K,L\in\CK^n$ with $\alpha_j(L)=1$.
Moreover, for every $j\in\setn{0,\ldots,n}$ and $K\in \CK^n$, there exists $L\in\CK^n$ with $\alpha_j(L)=1$ such that $\alpha_j(K)=\frac{R(K,L)}{r(K,L)}$.
Writing $d_D(K,L) \defeq \frac{R(K,L)}{r(K,L)}$, this shows
\begin{equation}\label{eq:fejestoth}
\alpha_j(K)=\min\setcond{d_D(K,L)}{L\in\CK^n,\,\alpha_j(L)=1}.
\end{equation}
The quantity $d_D$ has been studied as a close relative of the Banach--Mazur distance \cite{Toth2013}.
This gives another interpretation of $\alpha_j(K)$ as quantifying the distance from the family of convex bodies that are symmetric with respect to reflection at an appropriate $j$-dimensional subspace.
Since $\alpha_0$ is affinely invariant, \eqref{eq:fejestoth} generalizes the identity $\alpha_0(K)=\min\setcond{d_{BM}(K,L)}{L\in\CK^n,\,\alpha_0(L)=1}$ from \cite[Proposition~3.1]{BrandenbergGonzalez2017}.
\end{remark}

While \cref{thm:fejestoth} relates the values of the same Minkowski chirality of different convex bodies to each other, we can also compare different Minkowski chiralities associated with a single convex body.
This requires the following simple observations.

\begin{proposition}\label{thm:orthogonal-axis-central-symmetry}
Let $K \in \CK^n$ and $U \subset \R^n$ be a linear subspace.
Then
\begin{enumerate}[label={(\roman*)},leftmargin=*,align=left,noitemsep]
\item{$R(K,\Phi_U(K))=R(K,\Phi_{U^\perp}(-K))$, and \label{central-symmetry-perp}}
\item{$R(K,\Phi_U(K)) = R(K^\circ,\Phi_U(K^\circ))$ if $K$ is $0$-symmetric.\label{central-symmetry-circ}}
\end{enumerate}
\end{proposition}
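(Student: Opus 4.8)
The two parts both reduce to elementary algebra of the reflection $\Phi_U$, so the plan is to first isolate its structural features: it is linear, self-adjoint ($\Phi_U^\top=\Phi_U$, since $P_U=P_U^\top$), an involution ($\Phi_U^2=\mathrm{id}$, since $P_U^2=P_U$), and orthogonal.

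For \ref{central-symmetry-perp} I would start from the projector relation $P_U+P_{U^\perp}=\mathrm{id}$. Substituting it into the definition $\Phi_{U^\perp}(x)=2P_{U^\perp}(x)-x=2(x-P_U(x))-x$ gives $\Phi_{U^\perp}(x)=x-2P_U(x)=-\Phi_U(x)$, that is, $\Phi_{U^\perp}=-\Phi_U$ as linear maps. Because $\Phi_{U^\perp}$ is linear, this yields $\Phi_{U^\perp}(-K)=\setcond{\Phi_{U^\perp}(-x)}{x\in K}=\setcond{\Phi_U(x)}{x\in K}=\Phi_U(K)$. Thus the two sets $\Phi_U(K)$ and $\Phi_{U^\perp}(-K)$ are literally equal, and the asserted identity of circumradii is immediate. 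No symmetry hypothesis on $K$ enters here.

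For \ref{central-symmetry-circ} I would combine polar duality with the involution structure, in three ingredients. First, since $K$ is $0$-symmetric it has $0$ in its interior, so $K^\circ$ is again a $0$-symmetric convex body and the bipolar relation $(K^\circ)^\circ=K$ holds; the same applies to $\Phi_U(K)$, which is $0$-symmetric as a linear image of a $0$-symmetric body. Second, I would record the polar duality of the circumradius for $0$-symmetric bodies $A,B$: from $A\subset\lambda B\iff B^\circ\subset\lambda A^\circ$ (inclusion reversal together with $(\lambda B)^\circ=\lambda^{-1}B^\circ$ and bipolarity) one obtains $R(A,B)=R(B^\circ,A^\circ)$. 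Third, using the transformation rule $h_{\Phi_U(K)}(a)=h_K(\Phi_U^\top a)=h_K(\Phi_U a)$ together with $\Phi_U^2=\mathrm{id}$, I would verify that polarity commutes with the reflection, $(\Phi_U(K))^\circ=\Phi_U(K^\circ)$ (substitute $b=\Phi_U a$ in the description of the polar).

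Assembling these, the duality applied to $A=K$ and $B=\Phi_U(K)$ gives $R(K,\Phi_U(K))=R((\Phi_U(K))^\circ,K^\circ)=R(\Phi_U(K^\circ),K^\circ)$. Finally, invoking the identity $R(f(K),C)=R(K,f^{-1}(C))$ for invertible linear $f$ established in the proof of \cref{thm:similarity-invariance}, with $f=\Phi_U$ and $\Phi_U^{-1}=\Phi_U$, turns $R(\Phi_U(K^\circ),K^\circ)$ into $R(K^\circ,\Phi_U(K^\circ))$, which is exactly the claim. The main point to get right is the bookkeeping in the third ingredient — checking $(\Phi_U(K))^\circ=\Phi_U(K^\circ)$ cleanly and confirming that every polar manipulation is licensed by the $0$-symmetry hypothesis — but none of the steps is deep; the proposition is in essence an exercise in the algebra of $\Phi_U$ and polarity.
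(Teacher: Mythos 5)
Your proposal is correct and takes essentially the same approach as the paper: for \ref{central-symmetry-perp} both proofs show that $\Phi_{U^\perp}(-K)$ and $\Phi_U(K)$ are literally the same set, and for \ref{central-symmetry-circ} both rest on polarity reversing inclusions combined with the self-adjointness and involutivity of $\Phi_U$, using the translation-free form of $R$ for $0$-symmetric bodies. The only difference is organizational: the paper writes a single chain of equivalences $K\subset\lambda A(K)\Leftrightarrow\cdots\Leftrightarrow \lambda\Phi_U(K^\circ)\supset K^\circ$, whereas you factor the same content into the duality $R(A,B)=R(B^\circ,A^\circ)$, the identity $(\Phi_U(K))^\circ=\Phi_U(K^\circ)$, and the pullback rule from \cref{thm:similarity-invariance}.
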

\begin{proof}
For \ref{central-symmetry-perp}, recall that $\Phi_U(\Phi_U(x)) = x$ and $\Phi_U(\Phi_{U^\perp}(x))=-x$ for all $x \in \R^n$.
Thus,
\begin{equation*}
    \Phi_U(K)
    = \Phi_U(\Phi_U(\Phi_{U^\perp}(-K)))
    = \Phi_{U^\perp}(-K).
\end{equation*}
For \ref{central-symmetry-circ}, let $A\in\R^{n\times n}$ be the matrix representation of $\Phi_U$ with respect to the standard basis of $\R^n$.
Then the $0$-symmetry of $K$, $\Phi_U(K)$, $K^\circ$, and $\Phi_U(K^\circ)$ shows for $\lambda > 0$ that
\begin{align*}
\lambda \geq R(K,\Phi_U(K))&\Longleftrightarrow K\subset \lambda A(K)\Longleftrightarrow K^\circ \supset \frac{1}{\lambda}A^{-\top}(K^\circ)
\Longleftrightarrow \lambda A^\top (K^\circ)\supset K^\circ \\
&\Longleftrightarrow \lambda \Phi_U (K^\circ)\supset K^\circ \Longleftrightarrow \lambda \geq R(K^\circ,\Phi_U(K^\circ)).\qedhere
\end{align*}
\end{proof}

We are now ready to verify the final result of this section, which highlights the special role of point-symmetry and the Minkowski asymmetry $\alpha_0$ in the context of the general Minkowski chiralities.

\begin{theorem}\label{thm:alpha-symmetry}
Let $K \in \CK^n$ and $j \in \setn{0,\ldots,n}$.
Then
\begin{equation}\label{eq:alpha-symmetry}
    \max \setn{ \frac{\alpha_j(K)}{\alpha_{n-j}(K)}, \frac{\alpha_{n-j}(K)}{\alpha_j(K)} }    \leq \alpha_0(K).
\end{equation}
If $K$ is point-symmetric, then $\alpha_j(K)=\alpha_{n-j}(K)$.
If $K$ is $0$-symmetric, then $\alpha_j(K) = \alpha_j(K^\circ)$.
\end{theorem}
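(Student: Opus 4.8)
The plan is to reduce everything to \cref{thm:orthogonal-axis-central-symmetry} together with the submultiplicativity of the circumradius, $R(K,M)\le R(K,L)\,R(L,M)$, and the affine invariance of the Minkowski asymmetry $\alpha_0$. The submultiplicativity is immediate from the definition, since a dilate of a translate-containment is again a translate-containment, and the identity $R(C,-C)=\alpha_0(C)$ is simply the definition of the Minkowski asymmetry of $C$.

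For the inequality \eqref{eq:alpha-symmetry}, I would fix $U\in\gr{j}{\R^n}$ and abbreviate $C\defeq\Phi_U(K)$. Applying part \ref{central-symmetry-perp} of \cref{thm:orthogonal-axis-central-symmetry} to the subspace $U^\perp\in\gr{n-j}{\R^n}$ gives $R(K,\Phi_{U^\perp}(K))=R(K,\Phi_U(-K))$, and the linearity of $\Phi_U$ yields $\Phi_U(-K)=-C$. Submultiplicativity of the circumradius then shows
\begin{equation*}
    R(K,\Phi_{U^\perp}(K))=R(K,-C)\le R(K,C)\,R(C,-C)=R(K,\Phi_U(K))\,\alpha_0(C).
\end{equation*}
Since $\Phi_U$ is a linear isometry and $\alpha_0$ is invariant under similarity transforms, $\alpha_0(C)=\alpha_0(\Phi_U(K))=\alpha_0(K)$, so that $R(K,\Phi_{U^\perp}(K))\le \alpha_0(K)\,R(K,\Phi_U(K))$ for every $U\in\gr{j}{\R^n}$. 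Taking the infimum over $U\in\gr{j}{\R^n}$ on both sides and noting that $U^\perp$ ranges over all of $\gr{n-j}{\R^n}$ as $U$ ranges over $\gr{j}{\R^n}$, \eqref{eq:chirality-linear} turns this into $\alpha_{n-j}(K)\le\alpha_0(K)\,\alpha_j(K)$. Exchanging the roles of $j$ and $n-j$ gives the reverse bound, and the two together are exactly \eqref{eq:alpha-symmetry}.

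The two special cases then follow quickly. If $K$ is point-symmetric, then $\alpha_0(K)=1$, so \eqref{eq:alpha-symmetry} forces both $\alpha_j(K)/\alpha_{n-j}(K)$ and its reciprocal to be at most $1$, whence $\alpha_j(K)=\alpha_{n-j}(K)$. If $K$ is $0$-symmetric, then part \ref{central-symmetry-circ} of \cref{thm:orthogonal-axis-central-symmetry} gives $R(K,\Phi_U(K))=R(K^\circ,\Phi_U(K^\circ))$ for every $U\in\gr{j}{\R^n}$; taking the infimum over $U$ and invoking \eqref{eq:chirality-linear} immediately yields $\alpha_j(K)=\alpha_j(K^\circ)$.

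The only genuinely load-bearing step is the observation that the reflection across $U^\perp$ can be traded for the reflection across $U$ followed by the central reflection $-\mathrm{Id}$, at the multiplicative cost of $\alpha_0(K)$. This is precisely where part \ref{central-symmetry-perp} of \cref{thm:orthogonal-axis-central-symmetry} and the affine invariance of $\alpha_0$ enter, and it is what singles out $\alpha_0$ as the natural comparison constant between $\alpha_j$ and $\alpha_{n-j}$. Everything else is bookkeeping with infima over the Grassmannian.
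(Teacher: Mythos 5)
Your proof is correct and takes essentially the same route as the paper: both arguments hinge on \cref{thm:orthogonal-axis-central-symmetry}\ref{central-symmetry-perp} to trade the reflection across $U^\perp$ for a point reflection composed with the reflection across $U$ at the cost of a factor $\alpha_0(K)$, and the two special cases are handled identically. The only difference is bookkeeping: you keep $U$ arbitrary and take infima at the end (inserting $\alpha_0$ via submultiplicativity of $R$ and similarity invariance of $\alpha_0$), whereas the paper fixes an optimal $U$ obtained from \cref{prop:chirality-attained} and argues via monotonicity of the circumradius, so your variant does not even require that attainment result.
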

\begin{proof}
Let $U \in \gr{j}{\R^n}$ with $R(K,\Phi_{U^\perp}(K)) = \alpha_{n-j}(K)$ be obtained from \cref{prop:chirality-attained}.
Since $-K\subset_t \alpha_0(K)K$, the monotonicity and translation invariance of the circumradius, together with \cref{thm:orthogonal-axis-central-symmetry}, show
\begin{equation*}
    \alpha_j(K)
    \leq R(K,\Phi_U(K))
    = R(K,\Phi_{U^\perp}(-K))
    \leq R(K,\Phi_{U^\perp}(\alpha_0(K)K))
    = \alpha_0(K) \alpha_{n-j}(K).
\end{equation*}
Reversing the roles of $j$ and $n-j$ proves \eqref{eq:alpha-symmetry}.

Since a point-symmetric convex body $K$ satisfies $\alpha_0(K) = 1$, \eqref{eq:alpha-symmetry} shows $\alpha_j(K) \leq \alpha_{n-j}(K) \leq \alpha_{n-(n-j)}(K)  = \alpha_j(K)$ and consequently $\alpha_j(K) = \alpha_{n-j}(K)$ in this case.

Lastly, if $K$ is $0$-symmetric, \cref{thm:orthogonal-axis-central-symmetry} and \eqref{eq:chirality-linear} show 
\begin{equation*}
    \alpha_j(K^\circ)
    = \inf \setcond{R(K^\circ,\Phi_U(K^\circ))}{U \in \gr{j}{\R^n}}
    = \inf \setcond{R(K,\Phi_U(K))}{U \in \gr{j}{\R^n}}
    = \alpha_j(K).\qedhere
\end{equation*}
\end{proof}

Let us point out that the Minkowski asymmetry $\alpha_0$ on the right-hand side in \eqref{eq:alpha-symmetry} cannot be replaced with the other Minkowski chiralities in general.
For instance, any triangle $T \subset \R^2$ satisfies $\frac{\alpha_0(T)}{\alpha_2(T)} = 2$, yet \cref{thm:triangle} implies $\alpha_1(T) < \sqrt{2}$.
We also have $\alpha_0(T) \neq \alpha_2(T)$, so the point-symmetry in the second part of the theorem cannot be omitted.
Lastly, it is easy to see that $\alpha_j(K) = \alpha_j(K^\circ)$ may fail for general $K \in \CK^n$ with $0 \in \inte(K)$, even if $K$ is point-symmetric (with its center outside the origin).
For example, the square $K \subset \R^2$ with vertices $(2,1)$, $(-2,1)$, $(-2,-3)$, and $(2,-3)$ satisfies $\alpha_0(K) = 1$, but $K^\circ$ is a non-point-symmetric kite with $\alpha_0(K^\circ) > 1$.

\section{General upper bounds on the Minkowski chirality}\label{chap:main-theorem}

We verify \cref{thm:dbm,thm:upperboundsymmetric} in this section.
Their proofs require the following auxiliary result, which states that any ellipsoid is invariant under reflection at a subspace spanned by some of its principal axes.

\begin{proposition}\label{prop:ellipsoid_invar_reflection}
Let $v^1, \ldots, v^n \in \R^n$ form an orthonormal basis and let $\alpha_1, \ldots, \alpha_n > 0$.
Define the ellipsoid $E = \setcond{ x \in \R^n }{ \sum_{i =1}^n \frac{(x^\top v^i)^2}{\alpha_i^2} \leq 1 }$
and the subspace $U \defeq\setcond{\sum_{i=1}^j \mu_i v^i}{\mu_1,\ldots,\mu_j\in\R}$ for some $j \in \setn{0,\ldots,n}$.
Then $\Phi_U(E)= E$ and $\alpha_j(E) = 1$.
\end{proposition}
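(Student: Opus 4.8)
The plan is to compute the reflection $\Phi_U$ explicitly in the orthonormal coordinate system $v^1,\ldots,v^n$ and then observe that the quadratic form defining $E$ is invariant under it. Since $U$ is a \emph{linear} subspace with orthonormal basis $v^1,\ldots,v^j$, the orthoprojector is $P_U(x)=\sum_{i=1}^j (x^\top v^i)v^i$, and therefore
\[
\Phi_U(x)=2P_U(x)-x=\sum_{i=1}^j (x^\top v^i)\,v^i-\sum_{i=j+1}^n (x^\top v^i)\,v^i .
\]
In words, $\Phi_U$ fixes the first $j$ coordinates of $x$ with respect to the basis $v^1,\ldots,v^n$ and flips the sign of the remaining $n-j$ coordinates.

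Next I would verify $\Phi_U(E)=E$ directly. Writing $y=\Phi_U(x)$, we have $y^\top v^i=x^\top v^i$ for $i\le j$ and $y^\top v^i=-x^\top v^i$ for $i>j$, so that $(y^\top v^i)^2=(x^\top v^i)^2$ for every $i\in\setn{1,\ldots,n}$. Consequently
\[
\sum_{i=1}^n \frac{(y^\top v^i)^2}{\alpha_i^2}=\sum_{i=1}^n \frac{(x^\top v^i)^2}{\alpha_i^2},
\]
which shows $x\in E\iff \Phi_U(x)\in E$. Since $\Phi_U$ is a bijection, this yields $\Phi_U(E)=E$.

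Finally, from $\Phi_U(E)=E$ I obtain $R(E,\Phi_U(E))=R(E,E)=1$, whence $\alpha_j(E)\le 1$ by \eqref{eq:chirality-linear}. The reverse inequality $\alpha_j(E)\ge 1$ holds for every convex body: as a reflection, $\Phi_U$ is an isometry, so $\vol(\Phi_U(K))=\vol(K)$, and hence a containment $K\subset_t\lambda\,\Phi_U(K)$ forces $\lambda\ge 1$; taking the infimum over $U\in\gr{j}{\R^n}$ gives $\alpha_j(E)\ge 1$. Combining the two bounds yields $\alpha_j(E)=1$.

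I do not anticipate a genuine obstacle here, as the argument is a routine verification. The only points requiring care are to keep the semi-axis labels $\alpha_i$ of the ellipsoid $E$ notationally distinct from the chirality $\alpha_j(E)$, and to justify the lower bound $\alpha_j(E)\ge 1$ by the volume argument above rather than by invoking \cref{thm:dbm}, whose proof relies on the present proposition.
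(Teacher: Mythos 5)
Your proof is correct and takes essentially the same approach as the paper: both compute $\Phi_U$ in the orthonormal basis $v^1,\ldots,v^n$ via $\Phi_U(x)=2P_U(x)-x$, observe that each coordinate $x^\top v^i$ is preserved up to sign so that the defining quadratic form of $E$ is invariant, and conclude $\Phi_U(E)=E$ using $\Phi_U=\Phi_U^{-1}$. The paper leaves the step from $\Phi_U(E)=E$ to $\alpha_j(E)=1$ implicit, whereas you spell it out; your volumetric lower bound $\alpha_j(E)\geq 1$ is exactly the argument the paper uses later in the proof of \cref{thm:dbm}, so your caution about avoiding a circular appeal to that theorem is well placed.
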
 
\begin{proof}
Since $\Phi_U$ is invertible with $\Phi_U = \Phi_U^{-1}$,
we have $\Phi_U(E) = \Phi^{-1}_U(E)= \setcond{ y \in \R^n }{ \Phi_U(y) \in E }.$
It is therefore enough to show for any $y \in \R^n$ and $i \in \setn{1,\ldots,n}$ that
$(\Phi_U(y)^\top v^i)^2 = (y^\top v^i)^2$.

To this end, recall that $\Phi_U(y)=2 P_U(y) - y$,
where $P_U(y) = \sum_{i=1}^j (y^\top v^i) v^i$ is the orthogonal projection of $y$ onto $U$.
Now, if $i \in \setn{1,\ldots,j}$, we obtain
\begin{equation*}
    \Phi_U(y)^\top v^i
    = 2 \sum_{\ell=1}^j (y^\top v^\ell) ((v^\ell)^\top v^i) - y^\top v^i
    = 2 (y^\top v^i) - y^\top v^i
    = y^\top v^i.
\end{equation*}
If instead $i \in \setn{j+1,\ldots,n}$, we have
\begin{equation*}
    \Phi_U(y)^\top v^i
    = 2 \sum_{\ell =1}^j (y^\top v^\ell) ((v^\ell)^\top v^i) - y^\top v^i
    = 0 - y^\top v^i
    = - y^\top v^i.\qedhere
\end{equation*}
\end{proof}

The following result establishes a direct link between the $j$th Minkowski chirality and the Banach--Mazur distance to the Euclidean ball.
It serves as our main tool for the proofs of \cref{thm:dbm,thm:upperboundsymmetric}.
The inequality below can also be read as a lower bound to the Banach-Mazur distance to the Euclidean ball.
Lower bounds to Banach-Mazur distances are typically difficult to verify, which gives the inequality some additional value as another tool to obtain such lower bounds.
\begin{theorem} \label{thm:dbm_asymm} 
Let $K \in \CK^n$ and $j \in \setn{0,\ldots,n}$.
Then
\begin{equation*}
\alpha_j(K) \leq d_{BM}(K,\B_2^n).
\end{equation*}
Moreover, for every $\beta\in [1,n]$, there exists $K\in\CK^n$ such that $\alpha_0(K)=d_{BM}(K,\B_2^n)=\beta$.
\end{theorem}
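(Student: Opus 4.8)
The plan is to prove the two assertions separately, reducing the inequality to the comparison results already established and settling tightness with a single explicit family.

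For the inequality, the idea is to exploit that ellipsoids are fixed by a suitable reflection. I would fix $\lambda > d_{BM}(K,\B_2^n)$; by definition there are $A \in \gl(\R^n)$ and $t^1, t^2 \in \R^n$ with $t^1 + K \subset E \subset t^2 + \lambda K$ for the ellipsoid $E \defeq A(\B_2^n)$. The left inclusion gives $R(K,E) \leq 1$ and the right one gives $r(K,E) \geq 1/\lambda$, hence $d_D(K,E) = R(K,E)/r(K,E) \leq \lambda$. By \cref{prop:ellipsoid_invar_reflection} we have $\alpha_j(E) = 1$, so \cref{rem:fejestoth} yields $\alpha_j(K) \leq d_D(K,E) \leq \lambda$; letting $\lambda \downarrow d_{BM}(K,\B_2^n)$ finishes this part. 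The only point needing care is that the ellipsoid in the definition of $d_{BM}$ really is of the form covered by \cref{prop:ellipsoid_invar_reflection}, which follows from the singular value decomposition of $A$.

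For tightness, I would exhibit one parametrized family of rounded simplices. Let $T = \conv\setn{v^1,\ldots,v^{n+1}}$ be the regular simplex centered at the origin, with inradius $\rho$ and circumradius $R = n\rho$, so that $h_T(v^i) = R^2$ and $h_T(-v^i) = R\rho$ for each $i$ and $\sum_i v^i = 0$. For $s \in [\rho, R]$ set $K_s \defeq \conv(T \cup s\B_2^n)$. The endpoints are transparent: $K_R = R\B_2^n$ is a ball ($\beta = 1$) and $K_\rho = T$ is a simplex ($\beta = n$). For the interior range the goal is $\alpha_0(K_s) = d_{BM}(K_s,\B_2^n) = R/s$, which sweeps all of $[1,n]$ as $s$ runs through $[\rho,R]$, so choosing $s = R/\beta$ realizes any prescribed $\beta$. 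Since the circumball and inball of $K_s$ are the concentric balls $R\B_2^n$ and $s\B_2^n$, the sandwiching $K_s \subset R\B_2^n \subset (R/s)K_s$ gives $d_{BM}(K_s,\B_2^n) \leq R/s$ at once.

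The crux, and the step I expect to be the main obstacle, is the matching lower bound $\alpha_0(K_s) \geq R/s$. I would first record the containment $K_s \subset (R/s)(-K_s)$, which holds because $(R/s)(-K_s) \supset R\B_2^n \supset T \cup s\B_2^n$. To see that this containment is optimal, so that $\alpha_0(K_s) = R(K_s,-K_s) = R/s$ rather than something smaller, I would apply \cref{thm:brandenberg-koenig} with the vertex directions $v^1,\ldots,v^{n+1}$ as candidate contact normals: a short support-function computation gives $h_{K_s}(v^i) = \max\setn{R^2, sR} = R^2$ and $h_{(R/s)(-K_s)}(v^i) = (R/s)\,h_{K_s}(-v^i) = (R/s)\max\setn{R\rho, sR} = R^2$, so equality holds at each $v^i$, while $0 = \tfrac{1}{n+1}\sum_i v^i \in \conv\setn{v^1,\ldots,v^{n+1}}$ supplies the required origin-in-hull condition. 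With $\alpha_0(K_s) = R/s$ established, the already proven inequality $\alpha_0 \leq d_{BM}(\cdot,\B_2^n)$ traps $d_{BM}(K_s,\B_2^n)$ between $R/s$ and the upper bound $R/s$, yielding equality and completing the construction.
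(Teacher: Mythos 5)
Your proposal is correct, and it splits into two halves of different character relative to the paper. The inequality half is essentially the paper's own argument: both combine \cref{rem:fejestoth} (that $\alpha_j(K) \leq d_D(K,L)$ whenever $\alpha_j(L)=1$) with \cref{prop:ellipsoid_invar_reflection}; you merely make explicit, via the singular value decomposition and the approximation $\lambda \downarrow d_{BM}(K,\B_2^n)$, the identification of $\inf\setcond{d_D(K,E)}{E \text{ ellipsoid}}$ with $d_{BM}(K,\B_2^n)$ that the paper compresses into a single line. For the tightness claim you use the same one-parameter family of bodies as the paper up to similarity (your $K_s=\conv(T\cup s\B_2^n)$ is a dilate of the paper's $\conv(\B_2^n\cup(\beta T^n))$ with $\beta = R/s$), and the upper bound $d_{BM}(K_s,\B_2^n)\leq R/s$ is obtained by the same sandwiching; but the crucial lower bound is established by a genuinely different route. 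The paper argues indirectly: it sandwiches $d_{BM}(K,T^n)$ between $n/\beta$ and $n/\alpha_0(K)$, the latter coming from \cref{thm:fejestoth}, the affine invariance of $\alpha_0$, and the known extremal value $\alpha_0(T^n)=n$, and then concludes $\alpha_0(K)\geq\beta$. You instead compute $\alpha_0(K_s)=R(K_s,-K_s)=R/s$ directly, certifying optimality of the containment $K_s\subset (R/s)(-K_s)$ through \cref{thm:brandenberg-koenig}: the support-function equalities $h_{K_s}(v^i)=\max\setn{R^2,sR}=R^2$ and $(R/s)h_{K_s}(-v^i)=(R/s)\max\setn{R\rho,sR}=R^2$ hold exactly because $\rho\leq s\leq R$, and $0=\frac{1}{n+1}\sum_i v^i$ lies in $\conv\setn{v^1,\ldots,v^{n+1}}$; this computation is correct (including $h_T(v^i)=R^2$ and $h_T(-v^i)=R^2/n=R\rho$ for the regular simplex). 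The paper's route buys brevity and avoids all computation, at the price of invoking the characterization $\alpha_0(T^n)=n$ and affine invariance; your route is more self-contained and constructive, exhibiting the exact optimal containment, and would still work even if the extremal value of the Minkowski asymmetry for simplices were not known in advance.
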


\begin{proof}
From \eqref{eq:fejestoth} and \cref{prop:ellipsoid_invar_reflection}, we obtain
\begin{align*}
    \alpha_j(K)
    & = \min \setcond{d_D(K,L)}{L \in \CK^n, \alpha_j(L) = 1}
     \leq \inf \setcond{d_D(K,E)}{E \in \CK^n \text{ ellipsoid}}
    = d_{BM}(K,\B_2^n).
\end{align*}

Towards the second claim, let $T^n \in \CK^n$ be a regular simplex with $\frac{1}{n} \B_2^n \subset T^n \subset \B_2^n$.
For $K \defeq \conv( \B_2^n \cup (\beta T^n) )$, $\beta \in [1,n]$, we have 
\begin{equation*}
    \beta T^n
    \subset K
    \subset n T^n \quad \text{and} \quad \B_2^n
    \subset K
    \subset \beta \B_2^n,
\end{equation*}
which implies $d_{BM}(K,T^n) \leq \frac{n}{\beta}$ and
$\alpha_0(K) \leq d_{BM}(K,\B_2^n) \leq \beta$.
Now, \cref{thm:fejestoth} together with $\alpha_0(T^n) = n$ and the affine invariance of $\alpha_0$ additionally shows
\begin{equation*}
    d_{BM}(K,T^n)
    = \inf \setcond{\frac{R(K,A(T^n))}{r(K,A(T^n))}}{A \in \gl(\R^n)}
    \geq \frac{\alpha_0(T^n)}{\alpha_0(K)}
    \geq \frac{n}{\beta}.
\end{equation*}
Therefore, we must have $\alpha_0(K) = \beta$ and altogether $\beta= \alpha_0(K)\leq d_{BM}(K,\B_2^n)\leq \beta$.
\end{proof}

We now prove the upper bound on the Minkowski chirality for general $n$-dimensional convex bodies.

\begin{proof}[Proof of \cref{thm:dbm}]
The inequality $\alpha_j(K)\geq 1$ becomes clear from a volumetric argument:
If $U\in\gr{j}{\R^n}$, $x\in\R^n$, $\mu>0$, and $K\subset x+\mu\Phi_U(K)$, then $\vol(K)\leq \vol(x+\mu \Phi_U(K))=\mu^n \vol(K)$, so $\mu\geq 1$.
If $\alpha_j(K)=1$, then by \cref{prop:chirality-attained} there exist $U\in \gr{j}{\R^n}$, $x\in K$ such that $K-x\subset \Phi_U(K-x)$, i.e., $K\subset \Phi_{U+x}(K)$.
Since $K$ and $\Phi_{U+x}(K)$ have equal volumes, they coincide.

Towards the upper bound, we obtain from \cite[(10.113)]{Schneider2014} and \cref{thm:dbm_asymm} that 
\begin{equation*}
\alpha_j(K)\leq d_{BM}(K,\B_2^n)\leq n.
\end{equation*}
Moreover, \cite[Corollary~5.3]{BrandenbergGrundbacher2024} and \cite[Corollary~4.3]{BrandenbergKonig2015} together with the affine invariance of $\alpha_0$ imply
\begin{align*}
    \alpha_j(K)
    & \leq d_{BM}(K,\B_2^n)
    = \inf \setcond{\frac{w(A(K),\B_2^n)}{2 r(A(K),\B_2^n)}\frac{2 R(A(K),\B_2^n)}{w(A(K),\B_2^n)} }{A \in \gl(\R^n)}
    \\
    & \leq \frac{\alpha_0(K)+1}{2} \inf \setcond{\frac{2 R(A(K),\B_2^n)}{w(A(K),\B_2^n)}}{A \in \gl(\R^n)}
    \leq \frac{\alpha_0(K)+1}{2}\sqrt{n} ,
\end{align*}
where $w(K,C)\defeq 2r(K-K,C-C)$ is the minimal width of $K \in \CK^n$ with respect to $C \in \CK^n$.
\end{proof}

In a similar fashion, we can verify \cref{thm:upperboundsymmetric}.

\begin{proof}[Proof of \cref{thm:upperboundsymmetric}]
It is shown in \cite[Theorem~1.3]{GrundbacherKobos2024} that a point-symmetric convex body $K \in \CK^2$ with $d_{BM}(K,P) \geq 1 + \frac{10}{\sqrt{2}} \delta$ for a parallelogram $P \subset \R^2$ and some $\delta > 0$ satisfies $d_{BM}(K,\B_2^2) < \sqrt{2} - \delta$.
By \cref{thm:dbm_asymm}, the latter in particular means $\alpha_1(K) < \sqrt{2} - \delta$.
Therefore, $d_{BM}(K,P) \geq 1 + \varepsilon$ for some $\eps > 0$ implies with $\delta \defeq \frac{\sqrt{2}}{10} \eps$ that $\alpha_1(K) < \sqrt{2} - \delta = \sqrt{2} \left( 1 - \frac{\eps}{10} \right)$ as claimed.
\end{proof}

The final result of this section is another consequence of \cref{thm:fejestoth}.
It provides a stability improvement of the upper bound on the Minkowski chirality from \cref{thm:dbm} whenever the $j$th Minkowski chirality of simplices can be bounded away from $n$.
In this case, we also obtain an explicit improvement of the absolute upper bound $n$ on the $j$th Minkowski chirality as outlined below.

\begin{theorem}\label{thm:stability_near_simplex}
Let $K \in \CK^n$ with $\alpha_0(K) > n - \eps$ for some $\eps \in [0,\frac{1}{n})$, $j \in \{0, \ldots, n\}$, and $s(n,j)\defeq\sup \setcond{\alpha_j(T)}{T \in \CK^n \text{ simplex}}$.
Then
\begin{equation*}
    \alpha_j(K) < s(n,j) \left( 1 + \frac{(n+1)\eps}{1-n\eps} \right).
\end{equation*}
\end{theorem}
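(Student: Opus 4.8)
The plan is to compare $K$ with a single circumscribed simplex and invoke \cref{thm:fejestoth}. By \eqref{eq:ft}, for every simplex $T \in \CK^n$ we have $\alpha_j(K) \le \alpha_j(T)\, d_D(K,T) \le s(n,j)\, d_D(K,T)$, where $d_D(K,T) = R(K,T)/r(K,T)$ and $\alpha_j(T) \le s(n,j)$ holds by the very definition of $s(n,j)$. Writing $\delta \defeq n - \alpha_0(K) \in [0,\eps)$ and using the identity $1 + \frac{(n+1)\eps}{1-n\eps} = \frac{1+\eps}{1-n\eps}$ together with the fact that $x \mapsto \frac{1+x}{1-nx}$ is strictly increasing on $[0,\tfrac1n)$, it suffices to exhibit one simplex $T$ with $d_D(K,T) \le \frac{1+\delta}{1-n\delta}$; the strict inequality claimed in the theorem then follows from $\delta < \eps$.

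To build $T$, I would place $K$ in a position where the origin is its Minkowski center, so that $-K \subset \alpha_0(K)\,K$ is an optimal containment. \cref{thm:brandenberg-koenig} then yields a nondegenerate family of contact normals $u^0,\dots,u^n$ with $0$ in the interior of $\conv\setn{u^0,\dots,u^n}$ and $h_K(-u^i) = \alpha_0(K)\,h_K(u^i)$; set $a_i \defeq h_K(u^i)$. The supporting halfspaces $\setcond{x}{(u^i)^\top x \le a_i}$ intersect in a simplex $T$ with $K \subset T$ whose every facet touches $K$. A short argument using a positive relation $\sum_i \lambda_i u^i = 0$ (with $\lambda_i>0$ and $\sum_i\lambda_i=1$) shows that shrinking $T$ is impossible, so $R(K,T) = 1$ and therefore $d_D(K,T) = 1/r(K,T)$.

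The key quantitative input is that near-maximality of $\alpha_0(K)$ forces the contact points of $K$ to sit near the vertices of $T$. Working in barycentric coordinates on $T$, let $w^i$ be the vertex opposite the facet of normal $u^i$ and $m_i \defeq (u^i)^\top w^i$. The relation $\sum_i \lambda_i u^i = 0$ gives $a_i - m_i = A/\lambda_i$ with $A \defeq \sum_k \lambda_k a_k$, while $K \subset T$ yields $m_i \le -\alpha_0(K)\,a_i$ and hence the classical estimate $\lambda_i a_i \le A/(\alpha_0(K)+1)$; summing over the remaining indices forces $\lambda_i a_i \ge A\,\frac{1-\delta}{n+1-\delta}$. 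Consequently the opposite contact point $q^i \in K$ determined by $(u^i)^\top q^i = -\alpha_0(K)\,a_i$ has $i$-th barycentric coordinate $\gamma_i(q^i) = (\alpha_0(K)+1)\,\lambda_i a_i/A \ge 1-\delta$. Thus $\conv\setn{q^0,\dots,q^n} \subset K$ is the image of $T$ under a map that is $\delta$-close to the identity in barycentric coordinates, and a direct computation shows that it contains a homothet of $T$ of ratio at least $\frac{1-n\delta}{1+\delta}$. This gives $r(K,T) \ge \frac{1-n\delta}{1+\delta}$, so $d_D(K,T) \le \frac{1+\delta}{1-n\delta}$, which combined with the first paragraph proves the theorem.

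The main obstacle is this final inradius estimate: converting the bound $\gamma_i(q^i)\ge 1-\delta$ (contact points near vertices) into the sharp homothety ratio $\frac{1-n\delta}{1+\delta}$ requires controlling the off-diagonal barycentric coordinates of the $q^i$ and inverting the resulting near-identity row-stochastic matrix, and it is exactly here that the hypothesis $\eps < \tfrac1n$, equivalently $1-n\delta>0$, is needed to keep the inscribed simplex nondegenerate. A secondary technical point is the selection of a genuinely $(n+1)$-element family of contact normals with $0$ in the interior of their convex hull, which I would justify using that $\alpha_0(K)$ exceeds $n-\tfrac1n$ (a smaller positively spanning set is incompatible with $\alpha_0(K)$ being so close to $n$). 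Alternatively, the whole stability estimate could be replaced by invoking the Banach--Mazur stability of the simplex from \cite{Schneider2009}, since \cref{thm:fejestoth} and the affine equivalence of all simplices already give $\alpha_j(K) \le s(n,j)\, d_{BM}(K,T)$ for any fixed simplex $T$.
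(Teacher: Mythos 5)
Your opening reduction is exactly the paper's: by \cref{thm:fejestoth}, $\alpha_j(K) \le \alpha_j(T)\,d_D(K,T) \le s(n,j)\,d_D(K,T)$ for every simplex $T$, and your identity $1+\frac{(n+1)\eps}{1-n\eps}=\frac{1+\eps}{1-n\eps}$ together with the strict monotonicity of $x\mapsto\frac{1+x}{1-nx}$ on $[0,\frac1n)$ correctly reduces the theorem to exhibiting one simplex with $d_D(K,T)\le\frac{1+\delta}{1-n\delta}$, where $\delta=n-\alpha_0(K)<\eps$. The paper gets precisely this by citing \cite[Theorem~2.1]{Schneider2009}, after noting that $\inf_T d_D(K,T)=d_{BM}(K,T_0)$ for any fixed simplex $T_0$ because all simplices are affinely equivalent; that is literally the ``alternative'' you mention in your last sentence. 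Your secondary worry (selecting $n+1$ contact normals with $0$ in the interior of their convex hull) is fixable by your own computation: the relation $\lambda_i a_i(1+\alpha_0(K))\le A$, summed over any positively-combining family of $m$ contact normals, gives $\alpha_0(K)\le m-1$, so $\alpha_0(K)>n-\eps>n-1$ forces $m=n+1$ and $0\in\inte(\conv\setn{u^0,\ldots,u^n})$.

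The genuine gap is the step you yourself flag as ``the main obstacle'': the assertion that $\conv\setn{q^0,\ldots,q^n}$ contains a homothet of $T$ of ratio $\frac{1-n\delta}{1+\delta}$ is not a direct computation --- modulo your first paragraph it \emph{is} the stability theorem you would otherwise cite. Worse, the data your reduction retains (the barycentric matrix of $(q^0,\ldots,q^n)$ is row-stochastic with diagonal entries $\ge 1-\delta$) does not imply the claimed constant: for the $2\times 2$ matrix with rows $(1-\delta,\delta)$ and $(\delta,1-\delta)$, the largest homothet of the segment $T$ inside $\conv\setn{q^0,q^1}$ has ratio $1-2\delta$, which is strictly below $\frac{1-\delta}{1+\delta}$ for $\delta>0$. (This matrix cannot come from an actual body, since $\delta=0$ in dimension one, but it shows the inference is invalid at the level of generality at which you make it; for $n\ge2$ determining the worst admissible matrix is a nontrivial optimization you do not carry out.) Nor is there slack to absorb a weaker estimate: a bound such as $r(K,T)\ge 1-(n+1)\delta$ only gives $d_D(K,T)\le\frac{1}{1-(n+1)\delta}$, and since $(1+\eps)\bigl(1-(n+1)\eps\bigr)=1-n\eps-(n+1)\eps^2<1-n\eps$, this exceeds $\frac{1+\eps}{1-n\eps}$ once $\delta$ is close to $\eps$, so the final inequality would fail. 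Hence the sharp constant is indispensable, your main route is incomplete exactly at its quantitative heart, and the complete proof is the short route you relegate to a remark --- which coincides with the paper's.
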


Note that by \cref{thm:triangle}, the above supremum $s(n,j)$ is not always attained as a maximum.

\begin{proof}
\cref{thm:fejestoth} yields for any simplex $T \in \CK^n$ the inequality
\begin{equation*}
    \alpha_j(K)
    \leq \alpha_j(T) \frac{R(K,T)}{r(K,T)}
    \leq s(n,j) \frac{R(K,T)}{r(K,T)}.
\end{equation*}
Minimizing the right-hand term over all choices of $T$ thus yields $\alpha_j(K) \leq s(n,j) d_{BM}(K,T)$.
Lastly, we apply \cite[Theorem~2.1]{Schneider2009}, which states that
\begin{equation*}
    d_{BM}(K,T)
    < 1 + \frac{(n+1)\eps}{1-n\eps}
\end{equation*}
under the assumptions in the theorem.
\end{proof}

Finally, let us discuss the improvement of the absolute upper bound $n$ on the $j$th Minkowski chirality.
Whenever $s(n,j) < n$, there exists a unique number $\eps(n,j) \in (0,\frac{1}{n})$ that satisfies
\begin{equation*}
    \sqrt{n (n - \eps(n,j))}
    = s(n,j) \left( 1 + \frac{(n+1) \eps(n,j)}{1-n\eps(n,j)} \right).
\end{equation*}
By \eqref{eq:improved_upperbound} and \cref{thm:stability_near_simplex}, any $K \in \CK^n$ satisfies $\alpha_j(K) \leq c(n,j) \defeq \sqrt{n (n - \eps(n,j))}$ since either $\alpha_0(K) \leq n - \eps(n,j)$ and thus $\alpha_j(K) \leq \sqrt{ \alpha_0(K)n} \leq c(n,j)$, or $\alpha_j(K) > n - \eps(n,j)$ and thus 
\begin{equation*}\alpha_j(K) < s(n,j) \left( 1 + \frac{(n+1) \eps(n,j)}{1-n\eps(n,j)} \right) = c(n,j).
\end{equation*}

For $(n,j) = (2,1)$, \cref{thm:triangle} shows $s(2,1) = \sqrt{2}$.
We may therefore compute $c(2,1)$ explicitly, which leads to 
\begin{equation}\label{eq:c21}
    \alpha_1(K) \leq \sqrt{ \frac{1}{6} \left( 13 - \frac{11}{\left( 631 + 54 \sqrt{137} \right)^{\nicefrac{1}{3}}} + \left( 631 + 54 \sqrt{137} \right)^{\nicefrac{1}{3}} \right)} < 1.95
\end{equation}
for $K\in\CK^2$.

\section{Parallelograms}\label{chap:planar}

In this section, we prove \cref{thm:parallelogram}.
Let us describe the roadmap of our proof.

First, note that \cref{thm:dbm} already verifies that $\alpha_1(K)\geq 1$, with equality precisely if $K$ is a rectangle or a rhombus.
Therefore, it remains to prove \cref{thm:parallelogram} for parallelograms that are neither rectangles nor rhombuses.
Such parallelograms shall be referred to as \emph{\para{}} in the sequel. In \cref{thm:cool-proof}, we show that a reflection axis at which $\alpha_1(K)$ is attained is necessarily parallel or perpendicular to the bisector of an angle formed by consecutive edges or by the diagonals of $K$, or to a principal axis of the John ellipse $\John(K)$.

Second, we derive the values of $R(K,\Phi_U(K))$ for the candidate subspaces $U$ in \cref{prop:angle-bisector-parallelogram}.
To do so, note that by \cref{thm:similarity-invariance}, we may parameterize parallelograms by their \emph{side length ratio} $r\geq 1$, i.e., the ratio of the length of the longer edge divided by the length of the shorter one, and the larger of the two interior angles $\theta\in [\frac{\uppi}{2},\uppi)$.
In particular, this means that $r>1$ if $K$ is not a rhombus, and $\theta>\frac{\uppi}{2}$ if $K$ is not a rectangle.

Third, we prove \eqref{eq:parallelogram-range} and the characterization of its boundary cases in \cref{thm:range-parallelogram}.
(Note that the bound $\alpha_1(K)\leq \sqrt{2}$ for parallelograms $K$ already follows from \cref{thm:dbm}.)

Last, we describe a \enquote{phase diagram} for the optimal reflection axes in \cref{thm:parallelogram-phase-diagram}, i.e., we establish explicit conditions on $r$ and $\theta$ that tell us which of the candidates for the optimal reflection axis is the one that attains $\alpha_1(K)$.

The following result is based on the fact that for any set $K\subset\R^2$ and any two subspaces $U,U'\in\gr{1}{\R^2}$, the set $\Phi_{U'}(K)$ is the image of $\Phi_U(K)$ under a rotation at the coordinate origin.
This idea is used again to determine the optimal reflection-axes for triangle in the next section. 

\begin{proposition}\label{thm:cool-proof}
Let $K \in \CK^2$ be a \para{} $0$-symmetric parallelogram and $U\in\gr{1}{\R^2}$ such that $R(K,\Phi_U(K)) = \alpha_1(K)$.
Then $U$ is parallel or perpendicular to either
\begin{enumerate}[label={(\roman*)},leftmargin=*,align=left,noitemsep]
\item{the bisector of an angle formed by consecutive edges of $K$,\label{optimal-edge-bisector}}
\item{the bisector of an angle formed by the diagonals of $K$,\label{optimal-diagonal-bisector} or}
\item{a principal axis of the John ellipsoid of $K$.\label{optimal-john}}
\end{enumerate}
\end{proposition}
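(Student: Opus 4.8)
The plan is to reduce the minimization over all axes to a one-parameter problem and then locate its minima through a contact analysis. Write $K=\conv\setn{\pm a,\pm b}$, let $\nu_1,\nu_2$ be the two edge normals of $K$ (so $\nu_i$ is orthogonal to one of the edge directions $a\pm b$), and for $\theta\in[0,\uppi)$ let $U_\theta\in\gr{1}{\R^2}$ be the line through the origin at angle $\theta$. Since $K$ and $\Phi_{U_\theta}(K)$ are $0$-symmetric parallelograms, translations may be dropped, and the support function identity $h_{\Phi_{U_\theta}(K)}(\Phi_{U_\theta}\nu_i)=h_K(\nu_i)$ yields the clean formula
\begin{equation*}
    g(\theta)\defeq R(K,\Phi_{U_\theta}(K))=\max_{i\in\setn{1,2}}\frac{h_K(\Phi_{U_\theta}\nu_i)}{h_K(\nu_i)}.
\end{equation*}
This is the analytic form of the rotation principle preceding the statement: as $\theta$ varies, $\Phi_{U_\theta}\nu_i$ runs (at double speed) around the circle, so each branch $f_i(\theta)\defeq \max\setn{\abs{a^\top\Phi_{U_\theta}\nu_i},\abs{b^\top\Phi_{U_\theta}\nu_i}}/h_K(\nu_i)$ is a piecewise sinusoid: on each arc a single vertex of $K$ attains $h_K$, and consecutive arcs meet exactly where $\Phi_{U_\theta}\nu_i$ passes through an edge normal of $K$.

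First I would show that the minima of $g=\max\setn{f_1,f_2}$ occur only at nonsmooth points. On a smooth arc the active branch equals $\abs{v^\top\Phi_{U_\theta}\nu_i}/h_K(\nu_i)$ for a fixed vertex $v$, a rectified cosine whose interior critical points are maxima; hence a local minimum of $g$ is either (T1) a kink of a single active branch $f_i$, that is, a $\theta$ with $\Phi_{U_\theta}\nu_i\in\setn{\pm\nu_1,\pm\nu_2}$, or (T2) a crossing $f_1(\theta)=f_2(\theta)$. Throughout I would use that $K$ is \para{}, so $\alpha_1(K)>1$ by \cref{thm:dbm}; thus $g(\theta)>1$ for every $\theta$, which lets me discard all degenerate configurations whose value is $1$.

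In case (T1) the kink gives $\Phi_{U_\theta}\nu_i=\pm\nu_k$. If $k=i$ the value is $h_K(\nu_i)/h_K(\nu_i)=1$, impossible since $g>1$; hence $k\neq i$, so the reflection across $U_\theta$ carries $\nu_i$ onto $\pm\nu_k$ and $U_\theta$ bisects the angle between $\setn{\nu_i,\pm\nu_k}$. As $\nu_1,\nu_2$ are the $90^\circ$-rotations of the edge directions, this bisector is orthogonal to a bisector of the angle between consecutive edges, which gives \ref{optimal-edge-bisector}. In case (T2) the crossing means both pairs of opposite edges of $g(\theta)\Phi_{U_\theta}(K)$ touch $K$; by the parallelogram contact criterion following \cref{thm:brandenberg-koenig}, either the \emph{same} pair of vertices of $K$ sits in a corner of $g(\theta)\Phi_{U_\theta}(K)$, or $\pm a$ lies on one edge pair and $\pm b$ on the other. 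In the first situation a vertex of $K$, say $a$, equals $g(\theta)\Phi_{U_\theta}(w)$ for a vertex $w\in\setn{a,b}$; comparing lengths, $w=a$ forces $g(\theta)=1$ (excluded), so $w=b$ and $\Phi_{U_\theta}$ maps the diagonal direction $b$ onto the diagonal direction $a$, whence $U_\theta$ bisects the angle between the diagonals, giving \ref{optimal-diagonal-bisector}.

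The remaining and hardest case is (T2) with \emph{distinct} contact vertices, where $g(\theta)\Phi_{U_\theta}(K)$ is a reflected, scaled copy of $K$ circumscribed about $K$ with each edge through one vertex of $K$; I must show this forces $U_\theta$ to be parallel to a principal axis of $\John(K)$, i.e. \ref{optimal-john}. The plan is to encode $\Phi_{U_\theta}$ by its symmetric involution matrix $S$ and $K$ by $A=[a\mid b]$, so that $\John(K)=\frac{1}{\sqrt2}A\B_2^2$ and the principal axes of $\John(K)$ are the eigenvectors of $M\defeq AA^\top=aa^\top+bb^\top$, the axis condition being $SM=MS$ (the spectrum of $M$ being simple for \para{} $K$). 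Writing the balanced-contact equations $g(\theta)\,h_K(\nu_1)=\abs{\nu_1^\top S a}$ and $g(\theta)\,h_K(\nu_2)=\abs{\nu_2^\top S b}$ together with the activity inequalities that select $a$ over $b$ in the first normal direction and $b$ over $a$ in the second, I would convert the crossing condition into a relation between $S$ and $A$ and show, using $\John(\Phi_{U_\theta}K)=\Phi_{U_\theta}\John(K)$ and \cref{prop:ellipsoid_invar_reflection}, that a local minimum of this type can only occur when $S$ commutes with $M$. Carrying out this reduction cleanly — ideally after rotating coordinates so that $M$ is diagonal, which is a similarity and hence harmless by \cref{thm:similarity-invariance} — is the main obstacle, since one must rule out the spurious solutions of the contact equations that are crossings but not minima. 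Finally, because $\Phi_{U}$ and $\Phi_{U^\perp}$ agree on $0$-symmetric bodies and the bisector of an angle is perpendicular to that of its supplement, each of the conclusions \ref{optimal-edge-bisector}--\ref{optimal-john} holds up to replacing $U_\theta$ by $U_\theta^\perp$, i.e. \enquote{parallel or perpendicular}.
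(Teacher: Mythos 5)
Your reduction to the one-parameter function $g(\theta)=\max_{i}\,h_K(\Phi_{U_\theta}\nu_i)/h_K(\nu_i)$ is correct (containment in a $0$-symmetric parallelogram only needs to be tested against its two facet normals, and $h_{\Phi_{U_\theta}(K)}(\Phi_{U_\theta}\nu_i)=h_K(\nu_i)$), and your classification of local minima into kinks (T1) and crossings (T2) faithfully reproduces the paper's four-case analysis: the observation that rectified-cosine arcs have no interior local minima replaces the paper's rotation-perturbation argument (its Case 1), your (T1) is exactly its Case 3 (an edge of $K$ inside an edge of $R(K,\Phi_U(K))\Phi_U(K)$, giving case (i)), and your (T2) with a shared contact-vertex pair is its Case 2 (shared diagonal, giving case (ii)). Those parts of your argument are complete, and the exclusion of the degenerate subcases via $g>1$ (using \cref{thm:dbm}) is handled correctly.

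The genuine gap is precisely the case you flag as \enquote{the main obstacle}: (T2) with distinct contact vertices, where every edge of $g(\theta)\Phi_{U_\theta}(K)$ contains a vertex of $K$ in its relative interior. For this case you only state a plan (encode the reflection by an involution $S$ and the body by $M=aa^\top+bb^\top$, then derive $SM=MS$ from the contact equations) without carrying it out, so the implication \enquote{distinct-vertex crossing $\Rightarrow$ $U_\theta$ is a principal axis of $\John(K)$} --- the heart of the proposition and the only place the John ellipse enters --- remains unproven. Moreover, your stated source of difficulty, that one must \enquote{rule out spurious solutions of the contact equations that are crossings but not minima}, is misdirected: no minimality is needed here at all. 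The paper settles this case by a direct computation from the contact configuration alone: choose $A\in\gl(\R^2)$ with $A(\John(K))=\B_2^2$, so that by John's theorem \cite{John1948} the parallelogram $A(K)$ is a square with all vertices on $\bd(\sqrt{2}\B_2^2)$; parameterizing the vertices $v,w$ of the inscribed reflected copy by $\lambda,\mu\in(0,1)$ along two edges of this square, the requirement that each edge of the circumscribed copy meet a vertex of $A(K)$ forces $\lambda=\mu$, hence the reflected copy is itself a square, hence $\John(\Phi_U(K))=\John(K)=\Phi_U(\John(K))$ and $U$ is a principal axis. Until you supply an argument of this kind (your $SM=MS$ route would presumably work, but the computation must actually be done and must in particular use that all four contacts are interior to the edges), the proposal does not establish case (iii).
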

\begin{proof}
Let $K=-K$ and $C=-C$ be parallelograms such that $K\subset C$ and $R(K,C)=1$.
By \cref{thm:brandenberg-koenig}, either two vertices of $K$ that are the endpoints of one of its diagonals lie on the boundary of $C$, or all four vertices of $K$ do.
Likewise, by $0$-symmetry, there are either two or four edges of $C$ that have nonempty intersection with $K$.

We do a case distinction for these two numbers.
In each case, we determine the implications of the additional assumption that $C=\lambda \Phi_U(K)$ for some $\lambda>1$ and some $U\in\gr{1}{\R^2}$, and specifically that $U$ is such that $\lambda=\alpha_1(K)$.

\emph{Case 1: There are precisely two vertices of $K$ on the boundary of $C$ and precisely two edges of $C$ have nonempty intersection with $K$.}
This means that there are two vertices $v_1$ and $-v_1$ of $K$ in the relative interior of edges $F$ and $-F$ of $C$, respectively, while the remaining two vertices $v_2$ and $-v_2$ of $K$ are elements of $\inte(C)$.
Then there is a rotation $\Psi:\R^2\to\R^2$ around the coordinate origin (with a small rotation angle in the appropriate direction) such that $\Psi(\pm v_1),\Psi(\pm v_2)\in \inte(C)$, i.e., $K\subset \inte(\Psi^{-1}(C))$ and hence $R(K,\Psi^{-1}(C))<1$.
The direction of the rotation angle corresponds to where the smaller angle formed by $\aff\setn{-v_1, v_1}$ and a perpendicular to $F$ is, and the existence of a rotation angle sufficiently close to $0$ follows by continuity of rotations.
If $C=\lambda \Phi_U(K)$ for some $\lambda$ and $U$, then there exists $U'\in\gr{1}{\R^2}$ such that $\Psi^{-1}(C)=\lambda\Phi_{U'}(K)$, which shows that our assumption $\lambda=R(K,\Phi_U(K))=\alpha_1(K)$ is violated.

\emph{Case 2: There are precisely two vertices of $K$ on the boundary of $C$ and all four edges of $C$ have nonempty intersection with $K$.}
This means that $K$ and $C$ share precisely one diagonal.
If $C=\lambda \Phi_U(K)$ for some $\lambda$ and $U$ and if the shared diagonal was the long or the short diagonal in both $K$ and $C$, then the shared diagonal would be a subset of $U$ (or of its orthogonal complement) and the interior angles at the shared vertices would be the same for $K$ and $C$.
By $K\subset C$, this would already imply that $K=C$, but since $K$ is assumed not to be a rhombus, this contradicts the assumption that $C$ is a dilated mirror image of $K$.
Thus, for $K$ and $C=\lambda \Phi_U(K)$ to be \para{} parallelograms that share precisely one diagonal, the reflection axis $U$ must be the bisector of one of the angles formed by the diagonals, see \cref{fig:P1}.

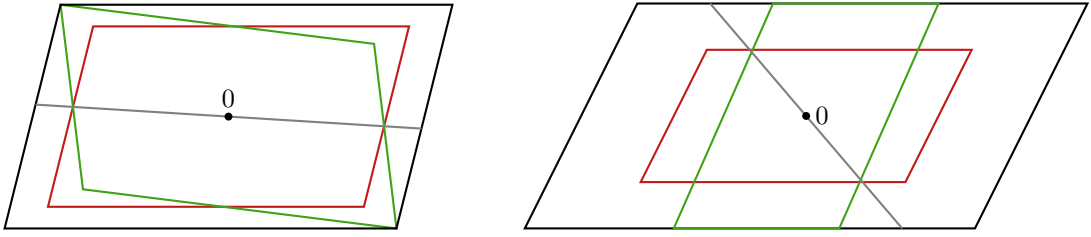
\begin{figure}[ht]
\newcommand\scale{0.6}
\newcommand\ds{1.5/\scale pt}
\centering
	\begin{tikzpicture}[scale=\scale]
    \draw [thick, dred] (4,2) -- (-3,2) -- (-4,-2) -- (3,-2) -- cycle;
    \draw [thick, dgreen] (-3.721,2.4807) -- (3.2249,1.6125) -- (3.721,-2.4807) -- (-3.2249,-1.6125) -- cycle;
    \draw [thick] (4*1.2403,2*1.2403) -- (-3*1.2403,2*1.2403) -- (-4*1.2403,-2*1.2403) -- (3*1.2403,-2*1.2403) -- cycle;
    \draw[thick, gray] (-4.274682681, 0.2661321184)-- (4.274682681,-0.2661321184);
    \fill (0,0) circle (\ds) node[above]{$0$};
\end{tikzpicture}
\qquad
\renewcommand\scale{0.44}
\begin{tikzpicture}[scale=\scale]
    \draw [thick, dred] (5,2) -- (-3,2) -- (-5,-2) -- (3,-2) -- cycle;
    \draw [thick] (5*1.7,2*1.7) -- (-3*1.7,2*1.7) -- (-5*1.7,-2*1.7) -- (3*1.7,-2*1.7) -- cycle;
    \draw [thick, dgreen] (4,2*1.7) -- (-1,2*1.7) -- (-4,-2*1.7) -- (1,-2*1.7) -- cycle;
    \draw[thick, gray] (-2.9,2*1.7) -- (2.9,-2*1.7); 
    
    \fill (0,0) circle (\ds) node[right] {$0$}; 
\end{tikzpicture}

  \caption{Case 2 (left) and Case 3 (right) in the proof of \cref{thm:cool-proof}: Parallelogram $K$ (green), reflection axis $U$ (grey), $\Phi_U(K)$ (red), and $C=R(K,\Phi_U(K))\Phi_U(K)$ (black).}
\label{fig:P1}
\end{figure}

\emph{Case 3: All four vertices of $K$ lie on the boundary of $C$ but only two edges of $C$ have nonempty intersection with $K$.}
This means that there are edges $F$ of $K$ and $F'$ of $C$ such that $F$ is a subset of the relative interior of $F'$.
If $C=\lambda \Phi_U(K)$ for some $\lambda$ and $U$ and if $F$ and $F'$ are both long or both short edges of $K$ and $C$, respectively, then $U$ is parallel or perpendicular to the common direction of $F$ and $F'$.
But then $F\subset F'$ would imply that $K$ was a rectangle.
Therefore, the reflection $\Phi_U$ necessarily maps the directions of the edges of $K$ onto each other, i.e., $U$ is parallel or perpendicular to the bisector of the angle formed by a pair of consecutive edges of $K$, see \cref{fig:P1} for an illustration.

\emph{Case 4: All four vertices of $K$ lie on the boundary of $C$ and all four edges of $C$ have nonempty intersection with $K$.}
If $K$ and $C$ share one diagonal and if $C$ is a dilated mirror image of $K$, then $U$ is parallel or perpendicular to the shared diagonal, or to the bisector of the angle formed by the diagonals of $K$.
Either way, all four vertices of $K$ lying on the boundary of $C$ then contradicts the assumption of $K$ being \para{}.
Hence $K$ and $C$ do not share any vertices, i.e., every edge of $C$ contains precisely one vertex of $K$ in its relative interior.
It remains to show that then $U$ is one of the principal axes of $\John(K)$.

Let $A\in\gl(\R^2)$ such that $A(\John(K)) = \B_2^2$.
By \cite[p.~203]{John1948}, $A(K)$ is a square with all vertices on $\bd(\sqrt{2}\B_2^2)$.
Let $u^1, u^2 \in \R^2$ be the standard Euclidean unit vectors and assume without loss of generality that $A(K)=\sqrt{2}\conv\setn{u^1,u^2,-u^1,-u^2}$.
There exist vertices $v,w$ of $\frac{1}{\alpha_1(K)} A(\Phi_U(K))$ with $v\in \sqrt{2}(u^1,u^2)$ and $w\in \sqrt{2}(-u^1,u^2)$.
Denote by $q^1$ the intersection point of $[0,\sqrt{2}u^1]$ and $[v,-w]$, and by $q^2$ the intersection point of $[0,\sqrt{2}u^2]$ and $[v,w]$, see \cref{fig:case3parallelotope}.
For having a vertex of $A(K)$ in the relative interior of each edge of $\alpha_1(K) A(\Phi_U(K))$, we need $\gauge{q^1}=\gauge{q^2}$.
Let $\lambda, \mu \in (0,1)$ with $v=\lambda \sqrt{2}u^1 +(1-\lambda)\sqrt{2}u^2 $ and $w=(1-\mu)(-\sqrt{2}u^1)+\mu \sqrt{2}u^2$.

The line through $v$ and $w$ is defined by the equation
\begin{equation*}
    x_2=\frac{1-\lambda -\mu}{1-\mu+\lambda} x_1+\sqrt{2}\left(\mu +(1-\mu)\frac{1-\lambda-\mu}{1-\mu+\lambda} \right),
\end{equation*}
which can be easily verified by checking that it is satisfied by $v$ and $w$.
This immediately implies 
\begin{equation*}
    \gauge{q^2}=\sqrt{2}\left(\mu +(1-\mu)\frac{1-\lambda-\mu}{1-\mu+\lambda} \right).
\end{equation*}
Analogously, we can compute
\begin{align*}
    \gauge{q^1}=\sqrt{2}\left(\lambda +(1-\lambda)\frac{1-\lambda-\mu}{1+\mu-\lambda} \right).
\end{align*}
The condition $\gauge{q^1} = \gauge{q^2}$ can be rearranged to
\begin{equation*}
    2 \sqrt{2} \frac{((1 - \lambda)(1 - \mu) + \lambda \mu) (\lambda - \mu)}{(1-\lambda+\mu)(1-\mu+\lambda)} = 0.
\end{equation*}
Since $\lambda, \mu \in (0,1)$, this implies $\lambda = \mu$.
Consequently $\frac{1}{\alpha_1(K)}A(\Phi_U(K))$ is a square.
Therefore, $\B_2^2$ is also the John ellipsoid of $A(\Phi_U(K))$, and the affine equivariance of John ellipsoids shows that $\John(K)=\John(\Phi_U(K))=\Phi_U(\John(K))$.
This means that $U$ is a principal axis of $\John(K)$.
\begin{figure}

    \noindent \begin{minipage}{0.45\textwidth}
        \centering
       \begin{tikzpicture}[scale=1.2]
\tkzDefPoints{0/0/O,0/1/P,1.41421/0/A,-1.41421/0/C,0/1.41421/B,0/-1.41421/D}
\tkzDefPointOnLine[pos=0.2](A,B)
\tkzGetPoint{A1}
\tkzDefPointOnLine[pos=0.4](B,C)
\tkzGetPoint{B1}
\tkzDefPointOnLine[pos=0.2](C,D)
\tkzGetPoint{C1}
\tkzDefPointOnLine[pos=0.4](D,A)
\tkzGetPoint{D1}
             \tkzDrawCircle(O,A)
             \tkzDrawCircle(O,P)
     \tkzInterLL(O,B)(B1,A1)
     \tkzGetPoint{L}
     \tkzInterLL(O,A)(A1,D1)
     \tkzGetPoint{M}
     \tkzGetVectxy(O,L){V}
      \tkzDefPointBy[homothety=center O ratio 1.41421/\Vy](A1)
      \tkzGetPoint{A2}
      \tkzDefPointBy[homothety=center O ratio 1.41421/\Vy](B1)
      \tkzGetPoint{B2}
      \tkzDefPointBy[homothety=center O ratio 1.41421/\Vy](C1)
      \tkzGetPoint{C2}
      \tkzDefPointBy[homothety=center O ratio 1.41421/\Vy](D1)
      \tkzGetPoint{D2}
      \tkzDrawPolygon[thick, dashed, dred](A2,B2,C2,D2)
              \tkzDrawPolygon[dgreen, thick](A,B,C,D)
              \tkzDrawPolygon[dred, dotted, thick](A1,B1,C1,D1)
              \tkzDrawPoints(A,B,C,D,A1,B1,C1,D1,O,L,M)
\tkzLabelPoint[below](O){$0$}
\tkzLabelPoint[above](A1){$v$}
\tkzLabelPoint[above](B1){$w$}
\tkzLabelPoint[left](M){$q^1$}
\tkzLabelPoint[below](L){$q^2$}
              
        \end{tikzpicture}
    \end{minipage}
    \begin{minipage}{0.4\textwidth}
    \centering
        \begin{tikzpicture}[scale=1.2]
\tkzDefPoints{0/0/O,0/1/P,1.41421/0/A,-1.41421/0/C,0/1.41421/B,0/-1.41421/D}
\tkzDefPointBy[rotation= center O angle 30 ](A)
\tkzGetPoint{A1}
\tkzDefPointBy[rotation= center O angle 30 ](B)
\tkzGetPoint{B1}
\tkzDefPointBy[rotation= center O angle 30 ](C)
\tkzGetPoint{C1}
\tkzDefPointBy[rotation= center O angle 30 ](D)
\tkzGetPoint{D1}
             \tkzDrawCircle(O,A)
             \tkzDrawCircle(O,P)
     \tkzInterLL(O,A)(A1,D1)
     \tkzGetPoint{L}
     \tkzGetVectxy(O,L){V}
      \tkzDefPointBy[homothety=center O ratio 1.41421/\Vx](A1)
      \tkzGetPoint{A2}
      \tkzDefPointBy[homothety=center O ratio 1.41421/\Vx](B1)
      \tkzGetPoint{B2}
      \tkzDefPointBy[homothety=center O ratio 1.41421/\Vx](C1)
      \tkzGetPoint{C2}
      \tkzDefPointBy[homothety=center O ratio 1.41421/\Vx](D1)
      \tkzGetPoint{D2}
      \tkzDrawPolygon[thick, dashed, dred](A2,B2,C2,D2)
              \tkzDrawPolygon[dgreen, thick](A,B,C,D)
              \tkzDrawPolygon[dred, thick](A1,B1,C1,D1)
              \tkzDrawPoints(A,B,C,D,A1,B1,C1,D1,O)
\tkzLabelPoint[below](O){$0$}
              
        \end{tikzpicture}
  
    \end{minipage}
    \caption{Case 4 in the proof of \cref{thm:cool-proof}: 
    Let all edges of $A(K)$  (green) contain a vertex of a parallelogram $P$ (red, dotted) that is not a square.
    Then $P$ cannot be a dilatation of $A(\Phi_U(K))$ since not all edges of $R(A(K),P)P$ (red, dashed) contain a vertex of $A(K)$ in their relative interiors (right panel). If $U$ is a principal axis of the John ellipsoid, $A(K)$ (green) and $A(\Phi_U(K))$ (red) are squares, both with John ellipsoid $\B_2^2$ and vertices on $\bd(\sqrt{2}\B_2^2)$. In the relative interior of each
    edge of $R(K,\Phi_U(K)) A(\Phi_U(K))$, there exists a vertex of  $A(K)$ (left panel). \label{fig:case3parallelotope}}
\end{figure}
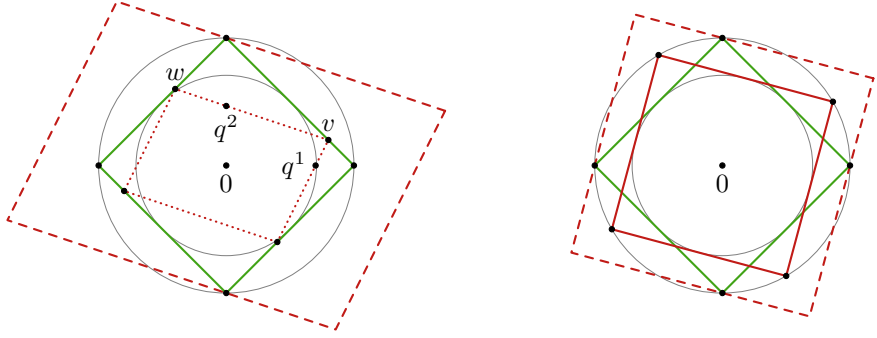
\end{proof}

When $U$ is one of the  candidate subspaces from \cref{thm:cool-proof}, the optimal containment situation $K\subset R(K,\Phi_U(K))\Phi_U(K)$ can be characterized in terms of the location of the vertices of $K$ relative to the boundary of $R(K,\Phi_U(K))\Phi_U(K)$.
In case of $U$ being a principal axis of the John ellipse, this characterization crucially simplifies the proof of \cref{prop:angle-bisector-parallelogram}\ref{dilation-john} below.

\begin{proposition}\label{thm:cool-proof2}
Let $K \in \CK^2$ be a \para{} 
$0$-symmetric parallelogram and $U\in\gr{1}{\R^2}$.
Then the following statements are true:
\begin{enumerate}[label={(\roman*)},leftmargin=*,align=left,noitemsep]
\item{$U$ is parallel to the bisector of an angle formed by consecutive edges of $K$ if and only if an edge of $K$ is a subset of an edge of $R(K,\Phi_U(K))\Phi_U(K)$,\label{touch-edge-bisector}}
\item{$U$ is parallel to the bisector of an angle formed by 
the diagonals of $K$ if and only if $K$ and $R(K,\Phi_U(K))\Phi_U(K)$ share a vertex, and\label{touch-diagonal-bisector}}
\item{$U$ is parallel to a principal axis of $\John(K)$ if and only if every edge of $R(K,\Phi_U(K))\Phi_U(K)$ contains a vertex of $K$ in its relative interior.\label{touch-john}}
\end{enumerate}
\end{proposition}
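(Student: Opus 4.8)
The plan is to prove the three biconditionals separately, in each case translating the geometric condition on $U$ into a statement about how the isometry $\Phi_U$ permutes the edge directions, the diagonal directions, or the John ellipse of $K$, and then reading off the resulting touching configuration. Throughout I would use two ingredients: the elementary fact that reflection across the bisector of the angle between two directions interchanges those directions, and the remark following \cref{thm:brandenberg-koenig}, by which the optimal container $C \defeq R(K,\Phi_U(K))\Phi_U(K)$ of a parallelogram always has two opposite edges each meeting a vertex of $K$. Since $\Phi_U$ is a linear isometry, $\Phi_U(K)$ is a $0$-symmetric parallelogram congruent to $K$, so $C$ is a $0$-symmetric parallelogram and all three statements concern the mutual position of two such parallelograms. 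I would also record the key nontriviality tool: $R(K,\Phi_U(K)) = 1$ forces $\Phi_U(K)=K$ (equal volume and $0$-symmetry preclude a proper translation), i.e.\ $K$ is symmetric across $U$, which a \para{} parallelogram never is; hence $R(K,\Phi_U(K)) > 1$ always.

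For \ref{touch-edge-bisector} and the analogous \ref{touch-diagonal-bisector} I would argue in parallel. The axis $U$ is parallel to a bisector of the angle between consecutive edges exactly when $\Phi_U$ maps the unordered pair of edge directions of $K$ to itself, i.e.\ when $\Phi_U(K)$ has the same edge directions as $K$ with the two side lengths interchanged (lengths being preserved). In the skew coordinates aligned with these directions, $K$ and $C$ are then boxes with swapped and scaled side lengths, and optimal containment of a box in a dilate of another is coordinatewise, so exactly one pair of opposite edges of $K$ lies flush inside the corresponding (strictly longer, as $K$ is \para{}) edges of $C$; this is the edge-in-edge configuration. Conversely, an edge of $K$ inside an edge of $C$ forces a common edge direction, hence $\Phi_U$ either swaps or fixes the edge directions; the latter would make the shared-direction edges of $K$ and $\Phi_U(K)$ equidistant from the origin, so collinearity after scaling would force $R(K,\Phi_U(K))=1$, which is excluded. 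Case \ref{touch-diagonal-bisector} is identical with diagonals in place of edges: $U$ bisects the diagonal angle exactly when $\Phi_U$ swaps the two diagonal directions, and since the vertices lie on the diagonals, scaling by $R(K,\Phi_U(K))$ makes one antipodal vertex pair of $K$ coincide with a vertex pair of $C$. For the converse, a shared vertex $p=R(K,\Phi_U(K))\,\Phi_U(s)$ with $s\in\{p,q\}$ gives $\gauge{p}=R(K,\Phi_U(K))\gauge{s}$; the choice $s=p$ is impossible since $R(K,\Phi_U(K))>1$, so $s=q$ and $\Phi_U$ swaps the diagonal directions, placing $U$ on the diagonal bisector.

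For \ref{touch-john} I would normalize by choosing $A\in\gl(\R^2)$ with $A(\John(K))=\B_2^2$; by \cite{John1948} the image $S\defeq A(K)$ is a square with vertices on $\bd(\sqrt{2}\,\B_2^2)$. As a linear map preserves circumradius ratios and face incidences, the assertion becomes that every edge of $R(S,P)\,P$ carries a vertex of $S$ in its relative interior, where $P\defeq A(\Phi_U(K))$. For the converse, if $U$ is a principal axis of $\John(K)$ then $\Phi_U(\John(K))=\John(K)$, and affine equivariance of the John ellipse under the isometry $\Phi_U$ yields $\John(\Phi_U(K))=\John(K)$; hence $P$ is again a square with incircle $\B_2^2$, i.e.\ a rotate of $S$. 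Both $S$ and $P$ are invariant under the quarter-turn, so $R(S,P)\,P$ is too; since at least one edge touches $S$ and since $P\neq S$ (a \para{} parallelogram has no reflection symmetry, so $\Phi_U(K)\neq K$), the four-fold symmetry propagates the touching to all four edges, each at a relative-interior vertex. The forward direction is precisely Case~4 in the proof of \cref{thm:cool-proof}: the requirement that all four edges of $R(S,P)\,P$ carry an interior vertex of $S$ forces, via the computation there ending in $\lambda=\mu$, the parallelogram $P$ to be a square, so $\B_2^2=\John(P)=A(\John(\Phi_U(K)))$ and therefore $\Phi_U(\John(K))=\John(K)$, i.e.\ $U$ is a principal axis.

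The main obstacle is this forward implication in \ref{touch-john}, namely that an ``all four edges touched'' configuration can only arise from a square $P$; this is exactly the nontrivial computation already carried out in Case~4 of \cref{thm:cool-proof}, which I would cite rather than repeat. The only other delicate points are the exclusions in the converses of \ref{touch-edge-bisector} and \ref{touch-diagonal-bisector}, where \para{}-ness of $K$ (equivalently $R(K,\Phi_U(K))>1$) is exactly what eliminates the degenerate axes parallel or perpendicular to an edge or a diagonal.
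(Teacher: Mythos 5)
Your proposal is correct and takes essentially the same route as the paper: the hard forward implication of \ref{touch-john} is delegated to the Case~4 computation in the proof of \cref{thm:cool-proof} (exactly as the paper does), and the backward implication of \ref{touch-john} is proved by normalizing $\John(K)$ to $\B_2^2$ so that $A(K)$ and $A(\Phi_U(K))$ become concentric congruent squares, with your quarter-turn symmetry argument playing the role of the paper's observation that the ratio $\gauge{w}/\gauge{p_w}$ is independent of the vertex $w$. The only difference is that you spell out the direction-swapping and shared-vertex arguments for \ref{touch-edge-bisector} and \ref{touch-diagonal-bisector}, which the paper dismisses as clear or covers via Cases~2--3 of \cref{thm:cool-proof}.
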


\begin{proof}
The sufficiency of the conditions on the location of the vertices of $K$ relative to the boundary of $R(K,\Phi_U(K))\Phi_U(K)$ has been addressed in the proof of \cref{thm:cool-proof}.
Their necessity is clear for \labelcref{touch-edge-bisector,touch-diagonal-bisector}.
It remains to show the necessity for \labelcref{touch-john}.

Let $U$ be a principal axis of $\John(K)$.
\cref{prop:ellipsoid_invar_reflection} and the equivariance of the John ellipse show $\John(K)=\Phi_U(\John(K))=\John(\Phi_U(K))$.
By \cite[p.~203]{John1948}, all vertices of $K$ and $\Phi_U(K)$ lie on $\bd(\sqrt{2}\John(K))$.
Let $A\in\gl(\R^n)$ be such that $ A(\John(K)) = \B_2^2$.
Then $\B_2^2$ is the John ellipse of the parallelograms $A(K)$ and $A(\Phi_U(K))$, and $A(K)$ and $A(\Phi_U(K))$ are $0$-symmetric squares with all vertices lying on $\bd(\sqrt{2}\B_2^2)$.
Since $\alpha_1(K) > 1$, the squares $A(K)$ and $A(\Phi_U(K))$ are different from each other.

We note for a vertex $w$ of $A(K)$ and $p_w \in [0, w] \cap \bd(A(\Phi_U(K)))$ that the ratio $\frac{\gauge{w}}{\gauge{p_w}} > 1$ does not depend on the choice of $w$.
See the right panel of \cref{fig:case3parallelotope} again for an illustration.
Thus, we have $A(K) \subset \frac{\gauge{w}}{\gauge{p_w}} A(\Phi_U(K))$ and there is a vertex of $A(K)$ in the relative interior of each edge of $\frac{\gauge{w}}{\gauge{p_w}}A(\Phi_U(K))$.
We have $\frac{\gauge{w}}{\gauge{p_w}} = R(A(K),A(\Phi_U(K))) = R(K,\Phi_U(K))$, so applying $A^{-1}$ to return to $K$ and $R(K,\Phi_U(K))\Phi_U(K)$ completes the proof.
\end{proof}

Next, we determine for a parallelogram $K \in \CK^2$ the minimal dilation factors in an inclusion $K \subset x + \lambda \Phi_U(K)$, where $U$ is the bisector of an angle formed by consecutive edges of $K$ or the diagonals of $K$, or a principal axis of the John ellipse of $K$.

\begin{proposition}\label{prop:angle-bisector-parallelogram}
Let $K\in\CK^2$ be a parallelogram with side length ratio $r> 1$ and larger interior angle $\theta \in (\frac{\uppi}{2},\uppi)$.
Then the following statements are true: \begin{enumerate}[label={(\roman*)},leftmargin=*,align=left,noitemsep]
    \item{If $U$ is the bisector of an interior angle of $K$, then
    \begin{equation*}
    R(K,\Phi_U(K))=r.
    \end{equation*}\label{dilation-bisector}}
    \item{If $U$ is an angle bisector of the diagonals of $K$, then
    \begin{equation*}
    R(K,\Phi_U(K))=\frac{r^2-2r\cos(\theta)+1}{\sqrt{(r^2+1)^2-4r^2\cos(\theta)^2}}.
    \end{equation*}\label{dilation-diagonal}}
    \item{If $U$ is a principal axis of the John ellipse of $K$, then
    \begin{equation*}
    R(K,\Phi_U(K))=\frac{r^2-2r\cos(\theta)-1}{\sqrt{(r^2-1)^2+4r^2\cos(\theta)^2}}.
    \end{equation*}\label{dilation-john}}
\end{enumerate}
\end{proposition}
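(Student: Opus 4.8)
The plan is to use \cref{thm:similarity-invariance} to fix a convenient representative: place $K$ as the $0$-symmetric parallelogram with edge vectors $e^1 = (r,0)^\top$ and $e^2 = (\cos\theta,\sin\theta)^\top$, so that $K = \setcond{s e^1 + t e^2}{|s|,|t| \le \tfrac12}$ has vertices $\pm\tfrac12(e^1 \pm e^2)$. Since $K$ and $\Phi_U(K)$ are both $0$-symmetric, translations are unnecessary and $R(K,\Phi_U(K)) = \inf\setcond{\lambda>0}{K \subset \lambda\Phi_U(K)}$. For each of the three axes I would compute this infimum from an explicit description of $\Phi_U(K)$ and certify that it is attained (hence equals $R$) using the contact characterization \cref{thm:cool-proof2} together with \cref{thm:brandenberg-koenig}. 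Throughout I will repeatedly use that $R$ is invariant under applying a common invertible linear map to both arguments, exactly as in the proof of \cref{thm:similarity-invariance}.

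The cases \ref{dilation-bisector} and \ref{dilation-diagonal} are the easy ones, since the relevant reflection merely swaps two natural directions while preserving lengths. For \ref{dilation-bisector}, reflection across the bisector of the interior angle swaps the unit edge directions $\widehat{e^1} \leftrightarrow \widehat{e^2}$, so $\Phi_U(K)$ has edges of length $r$ along $\widehat{e^2}$ and length $1$ along $\widehat{e^1}$; as $K$ and $\Phi_U(K)$ share both edge directions, containment is coordinatewise in the oblique basis $(\widehat{e^1},\widehat{e^2})$ and one reads off $R(K,\Phi_U(K)) = \max\setn{r,1/r} = r$. For \ref{dilation-diagonal}, the diagonals point along $e^1 \pm e^2$ with lengths $d_\pm \defeq \sqrt{r^2 + 1 \pm 2r\cos\theta}$, where $d_- > d_+$ because $\cos\theta < 0$. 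Writing $K = \conv\setn{\pm g^1, \pm g^2}$ via its half-diagonals, i.e.\ $K = \setcond{\alpha g^1 + \beta g^2}{|\alpha|+|\beta|\le1}$, the reflection swaps $\widehat{g^1}\leftrightarrow\widehat{g^2}$ and rescales; a short computation in these cross-polytope coordinates gives $R(K,\Phi_U(K)) = d_-/d_+$, with the endpoints of the long diagonal as the shared vertices predicted by \cref{thm:cool-proof2}\ref{touch-diagonal-bisector}. Finally $\frac{d_-}{d_+} = \frac{d_-^2}{d_- d_+} = \frac{r^2 - 2r\cos\theta + 1}{\sqrt{(r^2+1)^2 - 4r^2\cos^2\theta}}$, using $(r^2+1)^2 - 4r^2\cos^2\theta = d_-^2 d_+^2$, which is the claimed value.

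The case \ref{dilation-john} is the main obstacle, and I would handle it by normalizing the John ellipse to a circle. Since $U$ is a principal axis of $\John(K)$, \cref{prop:ellipsoid_invar_reflection} gives $\John(\Phi_U(K)) = \Phi_U(\John(K)) = \John(K)$. Choosing coordinates so that $\John(K)$ is axis-parallel makes $U$ a coordinate axis and $\Phi_U = F \defeq \mathrm{diag}(1,-1)$. Letting $D \defeq \mathrm{diag}(1/a,1/b)$ send $\John(K)$ (with semi-axes $a > b$) to $\B_2^2$, affine equivariance shows $D(K)$ is a parallelogram with John ellipse $\B_2^2$, hence, by the midpoint-tangency description of the John ellipse, a square, so $D(K) = \rho_\gamma(Q)$ for a rotation $\rho_\gamma$ and $Q = [-1,1]^2$. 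Because $F$ commutes with $D$ and fixes $Q$, one gets $\Phi_U(K) = D^{-1} \rho_{-\gamma}(Q)$, and the invariance of the circumradius under a common linear map yields
\begin{equation*}
    R(K,\Phi_U(K)) = R(\rho_\gamma(Q), \rho_{-\gamma}(Q)) = R(Q, \rho_{-2\gamma}(Q)) = \cos(2\gamma) + \sin(2\gamma).
\end{equation*}
It then remains to express $\gamma$ and the ratio $a/b$ through $r$ and $\theta$: reading the edge vectors of $K = D^{-1}\rho_\gamma(Q)$ gives $r^2 = \frac{a^2\cos^2\gamma + b^2\sin^2\gamma}{a^2\sin^2\gamma + b^2\cos^2\gamma}$ together with a companion expression for $\cos\theta$, after which a trigonometric identity converts $\cos(2\gamma)+\sin(2\gamma)$ into $\frac{r^2 - 2r\cos\theta - 1}{\sqrt{(r^2-1)^2 + 4r^2\cos^2\theta}}$. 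I expect this elimination of $a,b,\gamma$ to be the only genuinely laborious step; the checks at $\theta = \frac{\uppi}{2}$ (where the formula collapses to $1$, consistent with the axis-symmetry of a rectangle) and at $2\gamma = \frac{\uppi}{4}$ (where it attains $\sqrt{2}$) serve to guide and verify the bookkeeping.
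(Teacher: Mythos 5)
Your proposal is correct. For parts \ref{dilation-bisector} and \ref{dilation-diagonal} it is essentially the paper's own argument: the reflection swaps the two edge directions (resp.\ the two diagonal directions) while preserving lengths, so $R(K,\Phi_U(K))$ is the edge-length ratio $r$ (resp.\ the diagonal-length ratio, which rationalizes to the stated expression); the paper certifies optimality via \cref{thm:brandenberg-koenig}, whereas you get it from $0$-symmetry (justified by \eqref{eq:reflection-calculus} when the bisector does not pass through the origin) --- an equivalent bookkeeping choice.

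Part \ref{dilation-john} is where you take a genuinely different route. The paper finds the axis direction analytically: a Marden-type theorem locates the foci of $\John(K)$, hence the angle $2\varphi$ with $\Phi_U(a)=(\sin(2\varphi),\cos(2\varphi))^\top$, and then the factor $\lambda$ with $\frac{1}{\lambda}\Phi_U(a)\in\bd(K)$ is computed (using \cref{thm:cool-proof2} to know that this single contact determines $R$), with computer-algebra simplification into the $(r,\theta)$ parameters. You instead normalize $\John(K)$ to $\B_2^2$ via $D=\mathrm{diag}(1/a,1/b)$, so that $D(K)$ is a rotated square $\rho_\gamma(Q)$ with $Q=[-1,1]^2$, use that $\Phi_U=\mathrm{diag}(1,-1)$ commutes with $D$ and fixes $Q$, and reduce to $R(Q,\rho_{-2\gamma}(Q))=\cos(2\gamma)+\sin(2\gamma)$. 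The elimination you deferred does work, and more cleanly than you anticipate: with $N_2\defeq a^2\sin(\gamma)^2+b^2\cos(\gamma)^2$, the edge vectors $(a\cos\gamma,b\sin\gamma)^\top$ and $(-a\sin\gamma,b\cos\gamma)^\top$ of $K$ give
\begin{equation*}
r^2-1=\frac{(a^2-b^2)\cos(2\gamma)}{N_2},
\qquad
-2r\cos(\theta)=\frac{(a^2-b^2)\sin(2\gamma)}{N_2},
\end{equation*}
hence $(r^2-1)^2+4r^2\cos(\theta)^2=(a^2-b^2)^2/N_2^2$ and
\begin{equation*}
\frac{r^2-2r\cos(\theta)-1}{\sqrt{(r^2-1)^2+4r^2\cos(\theta)^2}}=\cos(2\gamma)+\sin(2\gamma),
\end{equation*}
so no computer algebra is needed at all. (Note that $r>1$ and $\cos(\theta)<0$ force $a>b$ and $\gamma\in(0,\frac{\uppi}{4})$, so all signs and the choice of branch for $\gamma$ are consistent.) Your normalization explains conceptually why the answer is a \enquote{rotated square} quantity and makes the extremal value $\sqrt{2}$ at $2\gamma=\frac{\uppi}{4}$ transparent; the paper's computation has the advantage of producing the axis direction explicitly in the original coordinates, which feeds into its later phase-diagram analysis.
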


\begin{proof}
By \cref{thm:similarity-invariance}, we may assume $K=\conv\setn{a,b,-a,-b}$, where $a=(1,0)^\top$ and $b=(z_1,z_2)^\top\in\R^2$ with $z_1,z_2>0$ and $z_1^2+z_2^2<1$.

For \ref{dilation-bisector}, if $U$ is the angle bisector of $K$ at $a$, then $\Phi_U$ maps $\aff\setn{a,b}$ onto $\aff\setn{a,-b}$ and vice versa.
In this case,
$K\subset a+\max\setn{\frac{\gauge{a-b}}{\gauge{a+b}},\frac{\gauge{a+b}}{\gauge{a-b}}} (\Phi_U(K)-a)$ is optimal by \cref{thm:brandenberg-koenig}, see left panel of \cref{fig:parallelogram-bisector-reflection}, i.e., $R(K,\Phi_U(K))=\max\setn{\frac{\gauge{a-b}}{\gauge{a+b}},\frac{\gauge{a+b}}{\gauge{a-b}}}=r$.

For \ref{dilation-diagonal}, note that $\Phi_U$ maps $\aff\setn{a,-a}$ onto $\aff\setn{b,-b}$ and vice versa.
For the ratio of the diagonal lengths $\lambda=\max\setn{\frac{2\gauge{a}}{2\gauge{b}},\frac{2\gauge{b}}{2\gauge{a}}}$, the containment $K\subset \lambda \Phi_U(K)$ is optimal by \cref{thm:brandenberg-koenig}, see right panel of \cref{fig:parallelogram-bisector-reflection}.
Thus, $R(K,\Phi_U(K))$ equals the ratio of the diagonal lengths.
In a parallelogram with side lengths $1$ and $r$ and larger interior angle $\theta$, we may use the law of cosines to determine the lengths of the diagonals.
These turn out to be $\sqrt{r^2+1-2r\cos(\theta)}$ and $\sqrt{r^2+1+2r\cos(\theta)}$, where the former is the larger one since $\cos(\theta) < 0$.

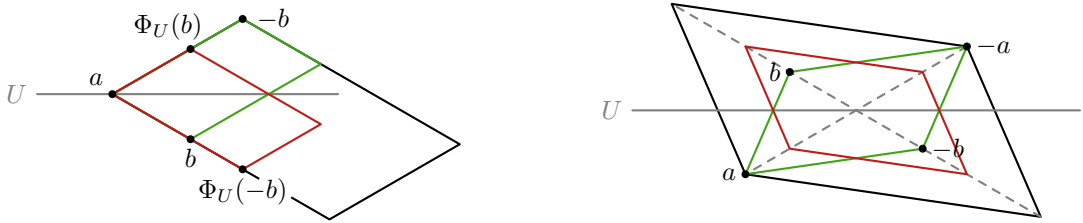
\begin{figure}[h]
 \begin{minipage}{0.47\textwidth}
    \centering
\begin{tikzpicture}[line cap=round,line join=round,>=stealth,x=1.0cm,y=1.0cm]
\pgfmathsetmacro{\x}{1.5}
\coordinate (c) at (30:2);
\coordinate (b) at (-30:1.2);
\coordinate (a) at (0,0);
\coordinate (cprime) at (-30:2);
\coordinate (bprime) at (30:1.2);
\coordinate (d) at (-30:3.3333);
\draw[thick] (a)--(c)--($(c)+(d)$)--(d)--cycle;
\draw[gray,thick] (3,0)--(-1,0) node[left]{$U$};
\draw[dgreen,thick] (c)--($(c)+(b)$)--(b)--(a)--cycle;
\draw[dred,thick] (cprime)--($(cprime)+(bprime)$)--(bprime)--(a)--cycle;
\fill[white] (-30:2.5) circle(7pt);
\fill (c) circle (1.5pt) node[right=2pt]{$-b$};
\fill (b) circle (1.5pt) node[below]{$b$};
\fill (a) circle (1.5pt) node[above left]{$a$};
\fill (cprime) circle (1.5pt) node[below]{$\Phi_U(-b)$};
\fill (bprime) circle (1.5pt);
\draw ($(bprime)+(-0.3,0.3)$) node{$\Phi_U(b)$};
\end{tikzpicture}
  \end{minipage} \hfill
  \begin{minipage}{0.47\textwidth}
    \centering
\newcommand\scale{0.85}
\newcommand\ds{1.5/\scale pt}
\begin{tikzpicture}[line cap=round,line join=round,>=stealth,x=1.0cm,y=1.0cm, scale=\scale]
\pgfmathsetmacro{\x}{1.5}
\coordinate (c) at (30:2);
\coordinate (c2) at (30:-2);
\coordinate (b) at (-30:1.2);
\coordinate (b2) at (-30:-1.2);
\coordinate (a) at (0,0);
\coordinate (cprime) at (-30:2);
\coordinate (bprime) at (30:1.2);
\coordinate (cprime2) at (-30:-2);
\coordinate (bprime2) at (30:-1.2);

\coordinate (d) at (-30:3.3333);
\coordinate (d2) at (-30:-3.3333);
\draw[thick] (c)--(d)--(c2)--(d2)--cycle;
\draw[gray,thick,dashed] (c)--(c2);
\draw[gray,thick,dashed] (d)--(d2);
\draw[gray,thick] (3.5,0)--(-3.5,0) node[left]{$U$};
\draw[dgreen,thick] (c)--(b2)--(c2)--(b)--cycle;
\draw[dred,thick] (cprime)--(bprime2)--(cprime2)--(bprime)--cycle;
\fill (c) circle (\ds) node[right]{$-a$};
\fill (b) circle (\ds) node[right]{$-b$};
\fill (b2) circle (\ds) node[left]{$b$};
\fill (c2) circle (\ds) node[left]{$a$};
\end{tikzpicture}
  \end{minipage}
  \caption{Optimal containment of a parallelogram $K$ (green) in the dilated mirror image after reflection across the reflection axis $U$ from the proof of \cref{prop:angle-bisector-parallelogram} for the case of a bisector of an interior angle (left panel) and a bisector of the diagonals (right panel): reflection axis $U$ (grey), $\Phi_U(K)$ (red), and an appropriate translate of $R(K,\Phi_U(K))\Phi_U(K)$ (black).}
  \label{fig:parallelogram-bisector-reflection}
\end{figure}

For \ref{dilation-john}, we invoke a generalization of Marden's theorem from \cite[Proposition~5]{Parish2006}, which implies that upon identification of $\R^2$ with $\C$, the focal points of $\John(K)$ are solutions $w\in\C$ of the equation $w^2=\frac{1+z_1^2-z_2^2}{2}+\ii z_1z_2$.
By assumption, the real and the imaginary parts of $\frac{1+z_1^2-z_2^2}{2}+\ii z_1z_2$ are both positive, so there is one solution $w$ with argument in $(0,\frac{\uppi}{4})$.
Therefore, the major axis of $\John(K)$, i.e., the straight line containing both focal points, is given by $U=\setcond{\mu (\cos(\varphi),\sin(\varphi))^\top
}{\mu\in\R}$, where $\varphi \in (0,\frac{\uppi}{4})$ is determined by $\cot(2\varphi)=\frac{1+z_1^2-z_2^2}{2z_1z_2}$.
Setting $t\defeq \sin(2\varphi)$, we obtain 
\begin{equation*}
    \frac{\sqrt{1-t^2}}{t}=\frac{1+z_1^2-z_2^2}{2z_1z_2}.
\end{equation*}
Squaring and solving for $t$ leads to
\begin{equation*}
    \sin(2\varphi)=\frac{2z_1z_2}{\sqrt{(1+z_1^2-z_2^2)^2+(2z_1z_2)^2}} \qquad\text{and}\qquad     \cos(2\varphi)=\frac{1+z_1^2-z_2^2}{\sqrt{(1+z_1^2-z_2^2)^2+(2z_1z_2)^2}}.
\end{equation*}
Note that $\Phi_U(a) = (\sin(2\varphi),\cos(2\varphi))^\top$.
By \cref{thm:cool-proof2}, all vertices of $\frac{1}{R(K,\Phi_U(K)}\Phi_U(K)$ are boundary points of $K$.
Let us thus compute the unique number $\lambda>0$ for which $\frac{1}{\lambda}\Phi_U(a)\in\bd(K)$.

With
\begin{equation*}
    \mu
    \defeq \frac{2 z_1}{1 - z_1^2 - z_2^2 + 2 z_1}
    = 1 - \frac{1 - z_1^2 - z_2^2}{1 - z_1^2 - z_2^2 + 2 z_1} \in (0,1),
\end{equation*}
we compute
\begin{align*}
    [a,b]
    & \ni (1-\mu) a + \mu b
    = \left( (1-\mu) + \mu z_1, \mu z_2 \right)^\top
    = \left( \frac{1 + z_1^2 - z_2^2}{1 - z_1^2 - z_2^2 + 2 z_1}, \frac{2 z_1 z_2}{1 - z_1^2 - z_2^2 + 2 z_1} \right)^\top
    \\
    & = \frac{\sqrt{(1 + z_1^2 - z_2^2)^2 + (2 z_1 z_2)^2}}{1 - z_1^2 - z_2^2 + 2 z_1} \left( \sin(2\varphi), \cos(2 \varphi) \right)^\top.
\end{align*}
Thus, we have for
\begin{equation} \label{eq:john-lambda}
    \lambda
    \defeq \frac{1 - z_1^2 - z_2^2 + 2 z_1}{\sqrt{(1 + z_1^2 - z_2^2)^2 + (2 z_1 z_2)^2}}
    > 0
\end{equation}
that $\frac{1}{\lambda} \Phi_U(a) \in [a,b] \subset \bd(K)$.
Since there exists only one such $\lambda > 0$, this yields $R(K,\Phi_U(K)) = \lambda$.

It remains to express $\lambda=R(K,\Phi_U(K))$ in terms of the side length ratio $r>1$ and the interior angle $\theta\in (\frac{\uppi}{2},\uppi)$.
Since $z_1,z_2>0$ and $z_1^2+z_2^2<1$, the line segment $[a,b]$ is one of the short edges of $K$ and the interior angle of $K$ at $b$ is obtuse.
If we abbreviate the length of this segment by $s\defeq \gauge{b-a}$, then $\gauge{b-(-a)}=rs$ is the other side length of $K$.
The law of cosines for the triangle $\conv\setn{a,b,-a}$ implies $r^2s^2+s^2-2rs^2\cos(\theta)=4$ or, equivalently,
\begin{equation*}
s^2=\frac{4}{1+r^2-2r\cos(\theta)}.
\end{equation*}
Note that $1+r^2-2r\cos(\theta)>0$ since $\cos(\theta)<0$.
Now, let $c=(z_1,0)^\top \in [-a,a]$ be the foot of the perpendicular from $b$ to $[-a,a]$.
Applying Pythagoras's theorem in the triangles $\conv\setn{a,b,c}$ and $\conv\setn{-a,b,c}$ gives
$(1-z_1)^2+z_2^2=s^2$ and $(1+z_1)^2+z_2^2=r^2s^2$.
It follows that
\begin{equation}\label{eq:first-entry}
 z_1=\frac{1}{4}(r^2-1)s^2=\frac{r^2-1}{1+r^2-2r\cos(\theta)}.
\end{equation}
This implies $1-z_1=\frac{2(1-r\cos(\theta))}{1+r^2-2r\cos(\theta)}$.
We obtain \begin{equation*}
    z_2^2
    =s^2-(1-z_1)^2
    =\frac{4}{1+r^2-2r\cos(\theta)}-\frac{4(1-r\cos(\theta))^2}{(1+r^2-2r\cos(\theta))^2}
    =\frac{4r^2\sin(\theta)^2}{(1+r^2-2r\cos(\theta))^2}.
\end{equation*}
Since $z_2>0$ and $\sin(\theta)>0$, we further have
\begin{equation}\label{eq:second-entry}
    z_2=\frac{2r\sin(\theta)}{1+r^2-2r\cos(\theta)}.
\end{equation}
Plugging \labelcref{eq:first-entry,eq:second-entry} into the numerator of \eqref{eq:john-lambda} and using the \texttt{subs} and \texttt{simplify} methods in SymPy \cite{sympy}, we obtain 
\begin{equation*}
    1-z_1^2-z_2^2+2z_1 = \frac{2(r^2-2r\cos(\theta)-1)}{1+r^2-2r\cos(\theta)}.
\end{equation*}

For the expression under the square root in the denominator of \eqref{eq:john-lambda}, a similar approach gives
\begin{equation*}
(1+z_1^2-z_2^2)^2+(2z_1z_2)^2=\frac{4(r^4+2r^2\cos(2\theta)+1)}{(r^2-2r\cos(\theta)+1)^2}.
\end{equation*}
Hence
\begin{equation*}
R(K,\Phi_U(K))=\lambda=\frac{r^2-2r \cos(\theta)-1}{\sqrt{r^4+2r^2\cos(2\theta)+1}}=\frac{r^2-2r\cos(\theta)-1}{\sqrt{(r^2-1)^2+4r^2\cos(\theta)^2}}. \qedhere
\end{equation*}
\end{proof}

\begin{corollary}
For a \para{} parallelogram with side length ratio $r>1$ and larger interior angle $\theta\in (\frac{\uppi}{2},\uppi)$, we have 
\begin{equation}\label{eq:alpha1-parallelogram}
\alpha_1(K)=\min\setn{r,\frac{r^2-2r\cos(\theta)+1}{\sqrt{(r^2+1)^2-4r^2\cos(\theta)^2}},\frac{r^2-2r\cos(\theta)-1}{\sqrt{(r^2-1)^2+4r^2\cos(\theta)^2}}}.
\end{equation}
\end{corollary}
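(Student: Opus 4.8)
The plan is to assemble the corollary from the two preceding results: \cref{thm:cool-proof} pins down the finitely many candidate directions for an optimal reflection axis, while \cref{prop:angle-bisector-parallelogram} supplies the explicit value of $R(K,\Phi_U(K))$ for each candidate. First I would invoke the translation invariance built into \cref{thm:similarity-invariance} to move the center of $K$ to the origin, so that $K$ becomes a $0$-symmetric parallelogram; this alters neither $\alpha_1(K)$ nor the shape parameters $r$ and $\theta$. Since $K$ is \para{}, we have $r>1$ and $\theta\in(\frac{\uppi}{2},\uppi)$, which is exactly the regime covered by \cref{prop:angle-bisector-parallelogram}.

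Next I would apply \cref{prop:chirality-attained} to obtain an axis $U^\ast\in\gr{1}{\R^2}$ realizing the infimum, i.e.\ $R(K,\Phi_{U^\ast}(K))=\alpha_1(K)$. Feeding this optimal $U^\ast$ into \cref{thm:cool-proof} shows that $U^\ast$ is parallel or perpendicular to one of three canonical directions: the bisector of an angle formed by consecutive edges, the bisector of an angle formed by the diagonals, or a principal axis of $\John(K)$. The \enquote{perpendicular} alternative is harmless here because of $0$-symmetry: by \cref{thm:orthogonal-axis-central-symmetry} together with $K=-K$, we have $R(K,\Phi_U(K))=R(K,\Phi_{U^\perp}(-K))=R(K,\Phi_{U^\perp}(K))$ for every $U\in\gr{1}{\R^2}$. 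Hence the value $\alpha_1(K)=R(K,\Phi_{U^\ast}(K))$ is already realized by taking $U^\ast$ to be one of the three bisector/axis directions themselves, and the two perpendicular bisectors (resp.\ the two principal axes) of each type collapse to a single circumradius value.

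Finally, \cref{prop:angle-bisector-parallelogram} identifies these three values as precisely $r$, $\frac{r^2-2r\cos(\theta)+1}{\sqrt{(r^2+1)^2-4r^2\cos(\theta)^2}}$, and $\frac{r^2-2r\cos(\theta)-1}{\sqrt{(r^2-1)^2+4r^2\cos(\theta)^2}}$. On the one hand, each of these is an upper bound for $\alpha_1(K)$, since $\alpha_1(K)$ is the infimum in \eqref{eq:chirality-linear} over all axes and each candidate axis is an admissible competitor. On the other hand, the optimal value $\alpha_1(K)$ coincides with exactly one of the three (namely the one to whose direction $U^\ast$ belongs). Combining these two observations yields $\alpha_1(K)=\min\{\ldots\}$, which is the asserted formula \eqref{eq:alpha1-parallelogram}.

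The heavy lifting is carried entirely by \cref{thm:cool-proof,prop:angle-bisector-parallelogram}, so the remaining argument is essentially bookkeeping. The one genuinely delicate point is the elimination of the \enquote{parallel or perpendicular} dichotomy coming out of \cref{thm:cool-proof}; this is resolved precisely by the $0$-symmetry identity $R(K,\Phi_U(K))=R(K,\Phi_{U^\perp}(K))$, and it is the reason the initial reduction to a $0$-symmetric representative is not merely cosmetic but load-bearing.
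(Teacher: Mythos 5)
Your proposal is correct and takes essentially the same approach as the paper, whose entire proof is the one-line statement that the corollary follows from \cref{thm:cool-proof} together with \cref{prop:angle-bisector-parallelogram}. The additional bookkeeping you supply (the reduction to a $0$-symmetric representative, the attainment of the infimum via \cref{prop:chirality-attained}, and the resolution of the parallel/perpendicular dichotomy through \cref{thm:orthogonal-axis-central-symmetry}) just makes explicit what the paper leaves implicit, and is sound.
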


\begin{proof}
This follows from \cref{thm:cool-proof} together with \cref{prop:angle-bisector-parallelogram}.
\end{proof}

\newpage
\begin{remark}\label{rem:polarity}\hfill
\begin{enumerate}[label={(\roman*)},leftmargin=*,align=left,noitemsep]
\item{The affine equivariance of John ellipses offers a method of computing their principal axes for parallelograms $K\defeq \conv\setn{a,b,-a,-b}$, where $a=(a_1,a_2)^\top$ and $b=(b_1,b_2)^\top\in\R^2$, other than the one used in the proof of \cref{prop:angle-bisector-parallelogram}\ref{dilation-john}.
Let $f:\R^2\to\R^2$, $f(x)=Mx$ be the linear map given by left multiplication with the matrix
    \begin{equation*}
      M\defeq\begin{pmatrix}
            a_1&b_1\\a_2&b_2
        \end{pmatrix}\begin{pmatrix}
            -1&1\\\phantom{-}1&1
        \end{pmatrix}^{-1}.
    \end{equation*}
Then we have $f([-1,1]^2)=K$, so $\John([-1,1]^2)=\B_2^2$ yields $\John(K)=f(\B_2^2)$.
In other words, we have $\John(K)=\setcond{x\in \R^2}{x^\top M^{-\top}M^{-1}x\leq 1}$, which means that the principal axes of $\John(K)$ are given by the eigenvectors of the symmetric matrix
\begin{equation*}
    M^{-\top}M^{-1}=2\begin{pmatrix}a_1&a_2\\b_1&b_2\end{pmatrix}^{-1}\begin{pmatrix}a_1&b_1\\a_2&b_2\end{pmatrix}^{-1}.
\end{equation*}}
\item{\cref{thm:orthogonal-axis-central-symmetry} sheds some additional light on the optimal axes in \cref{thm:parallelogram}.
If $K \in \CK^2$ is a $0$-symmetric parallelogram, then so is $K^\circ$,
and we can canonically associate all candidate reflection axes for $\alpha_1(K)$ with those for $\alpha_1(K^\circ)$.
First, the directions of the principal axes of $\John(K^\circ)$ coincide with those of $\John(K)$.
Indeed, if $A \in \gl(\R^n)$ such that $K = A([-1,1]^2)$, then $K^\circ = A^{-\top}(C)$, where $C \defeq ([-1,1]^2)^\circ$ is a rotation of $[-1,1]^2$ by $45^\circ$ dilated by the factor $\frac{1}{\sqrt{2}}$.
Therefore, the equivariance of the John ellipsoid shows
\begin{align*}
    \John(K^\circ) &= \John(A^{-\top}(C)) = A^{-\top}(\John(C)) = \frac{1}{\sqrt{2}} A^{-\top}(\B_2^2)\\
    & = \frac{1}{\sqrt{2}} (A(\John([-1,1]^2))^\circ= \frac{1}{\sqrt{2}} (\John(K))^\circ.
\end{align*}
Since the principal axes of a $0$-symmetric ellipsoid and its polar coincide (see, e.g., \cite[Lemma~2.1]{GrundbacherKobos2024}), this verifies that the principal axes of $\John(K^\circ)$ and $\John(K)$ really are the same.
Second, note that the edges of $K^\circ$ are perpendicular to the diagonals of $K$ and vice versa.
With this, it is easy to see that the interior angles of $K^\circ$ can be paired up with the angles formed by the diagonals of $K$ such that the angle pairs each add up to $180^\circ$.
In particular, the interior angles of $K^\circ$ coincide with the angles formed by the diagonals of $K$, and so do the bisectors of these respective angles (up to translation).
Last, we also see that the condition that the interior angles of $K$ coincide with the angles formed by the diagonals of $K$ can be equivalently stated as $K$ and $K^\circ$ being similar to each other.
(Note that the interior angles and the angles formed by the diagonals determine a parallelogram up to similarity.)
In this case, the bisectors $U^1$ and $U^2$ of an interior angle of $K$ and an angle formed by the diagonals of $K$, respectively, must satisfy $R(K,\Phi_{U^1}(K)) = R(K,\Phi_{U^2}(K))$.
}
\end{enumerate}
\end{remark}

We are ready to prove \eqref{eq:parallelogram-range} and determine the parallelograms with the maximum Minkowski chirality.
\begin{proposition}\label{thm:range-parallelogram}
We have $\setcond{\alpha_1(K)}{K\in\CK^2\text{ is a \para{} parallelogram}}=(1,\sqrt{2}]$.
Moreover, for a \para{} parallelogram $K\in\CK^2$, the condition $\alpha_1(K)=\sqrt{2}$ is equivalent to the condition that the angles formed by consecutive edges of $K$ coincide with the angles formed by its diagonals and the ratio between the length of a long edge and that of a short edge being at least $\sqrt{2}$.
\end{proposition}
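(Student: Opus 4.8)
The plan is to work directly with the explicit formula \eqref{eq:alpha1-parallelogram}, abbreviating the three terms in the minimum as $f_1 = r$, $f_2 = \frac{r^2 - 2r\cos(\theta) + 1}{\sqrt{(r^2+1)^2 - 4r^2\cos(\theta)^2}}$, and $f_3 = \frac{r^2 - 2r\cos(\theta) - 1}{\sqrt{(r^2-1)^2 + 4r^2\cos(\theta)^2}}$, regarded as functions of $r > 1$ and $c \defeq \cos(\theta) \in (-1,0)$. Since $\alpha_1(K) = \min\setn{f_1,f_2,f_3}$, the central step is to understand $f_3$. Substituting $u \defeq -2rc > 0$ and $A \defeq r^2 - 1 > 0$ rewrites $f_3^2 = \frac{(A+u)^2}{A^2+u^2}$, and a one-variable derivative computation shows this expression increases on $(0,A)$ and decreases on $(A,\infty)$, with maximal value $2$ attained exactly at $u = A$. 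Hence $f_3 \leq \sqrt{2}$ for all admissible $(r,\theta)$, with equality if and only if $-2r\cos(\theta) = r^2 - 1$, that is, $\cos(\theta) = -\frac{r^2-1}{2r}$ (which forces $r < 1+\sqrt{2}$ for $c$ to stay in $(-1,0)$). This reproves the bound $\alpha_1(K) \leq \sqrt{2}$ from \cref{thm:dbm} and, crucially, isolates the single curve on which $f_3 = \sqrt{2}$.

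Next I would identify this curve geometrically. A direct computation gives $f_1 = f_2$ if and only if $(r^2+1)(r^2-1+2rc) = 0$, i.e.\ exactly $\cos(\theta) = -\frac{r^2-1}{2r}$; by \cref{rem:polarity} this is precisely the condition that the angles formed by the diagonals of $K$ coincide with its interior angles. Substituting $\cos(\theta) = -\frac{r^2-1}{2r}$ into the three terms (using the difference-of-squares factorization $(r^2+1)^2 - 4r^2c^2 = (r^2+1-2rc)(r^2+1+2rc)$ for $f_2$) yields $f_1 = f_2 = r$ and $f_3 = \sqrt{2}$, so that $\alpha_1(K) = \min\setn{r,\sqrt{2}}$ along this curve. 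This computation does the bulk of the work for both the equality characterization and the range.

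For the equality case I would then argue directly. If the angles coincide and $r \geq \sqrt{2}$, the previous paragraph gives $\alpha_1(K) = \min\setn{r,\sqrt{2}} = \sqrt{2}$. Conversely, suppose $\alpha_1(K) = \sqrt{2}$. From $\alpha_1(K) \leq f_1 = r$ we obtain $r \geq \sqrt{2}$, and from $\alpha_1(K) \leq f_3 \leq \sqrt{2}$ together with $\alpha_1(K) = \sqrt{2}$ we obtain $f_3 = \sqrt{2}$, which by the maximization of $f_3$ forces $\cos(\theta) = -\frac{r^2-1}{2r}$, i.e.\ the angles coincide. This establishes the stated equivalence.

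Finally, for the range $(1,\sqrt{2}]$: by \cref{thm:dbm} a \para{} parallelogram satisfies $\alpha_1(K) > 1$, while $f_3 \leq \sqrt{2}$ gives $\alpha_1(K) \leq \sqrt{2}$, with the value $\sqrt{2}$ attained (for instance at $r = \sqrt{2}$, $\cos(\theta) = -\frac{1}{2\sqrt{2}}$). To see that every value in $(1,\sqrt{2}]$ occurs, I would restrict to the angles-coincide curve $\cos(\theta) = -\frac{r^2-1}{2r}$ with $r \in (1,\sqrt{2}]$ (admissible, since then $c \in (-1,0)$), along which $\alpha_1(K) = \min\setn{r,\sqrt{2}} = r$ sweeps out all of $(1,\sqrt{2}]$. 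The main obstacle is the maximization of $f_3$ in the first paragraph; the reformulation $f_3^2 = \frac{(A+u)^2}{A^2+u^2}$ turns it into a clean single-variable problem, and once that is in place the remaining steps are short algebraic substitutions.
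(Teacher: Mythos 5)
Your overall strategy is the same as the paper's: work from \eqref{eq:alpha1-parallelogram}, isolate the curve $\cos(\theta)=-\frac{r^2-1}{2r}$ on which the third term equals $\sqrt{2}$ and the first two terms coincide, and read off both the range and the equality case from it. Two of your steps are in fact cleaner than the paper's: the substitution $f_3^2=\frac{(A+u)^2}{A^2+u^2}$ with a one-variable maximization replaces the paper's perfect-square rearrangement of \eqref{eq:john-axis-ineq}, and sweeping $\alpha_1(K)=\min\setn{r,\sqrt{2}}=r$ along the curve for $r\in(1,\sqrt{2}]$ replaces the paper's continuity-plus-intermediate-value argument for the range. Those parts, as well as the algebra showing $f_1=f_2$ exactly on the curve, are correct.

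There is, however, one genuine gap: the sentence claiming that, \emph{by \cref{rem:polarity}}, the condition $\cos(\theta)=-\frac{r^2-1}{2r}$ (equivalently $f_1=f_2$) \emph{is} the condition that the interior angles coincide with the angles formed by the diagonals. \cref{rem:polarity}(ii) only supplies one implication: \emph{if} the angles coincide, \emph{then} the two bisector axes yield the same circumradius, i.e., $f_1=f_2$. Your forward direction (angles coincide and $r\geq\sqrt{2}$ imply $\alpha_1(K)=\sqrt{2}$) uses the remark in exactly this direction, so it is fine. But in your converse direction you deduce $\cos(\theta)=-\frac{r^2-1}{2r}$ from $f_3=\sqrt{2}$ and then assert ``i.e.\ the angles coincide'' --- that is the implication the remark does \emph{not} give, and it cannot simply be cited away. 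The paper proves it separately: assuming $\cos(\theta)=\frac{1}{2}\left(\frac{1}{r}-r\right)$, the law of cosines gives diagonal lengths $\sqrt{2}\,r$ and $\sqrt{2}$, and a second application in the triangle $\conv\setn{a,m,d}$ (half-diagonals and a long edge) yields $-2\cos(\delta)=r-\frac{1}{r}=-2\cos(\theta)$, whence $\theta=\delta$ by injectivity of the cosine on $\left(\frac{\uppi}{2},\uppi\right)$. You would need this computation, or an equivalent argument (e.g., that polarity swaps side ratio and diagonal ratio, so $f_1=f_2$ forces $K$ and $K^\circ$ to have the same similarity invariants, hence $K$ is similar to $K^\circ$ and the remark's equivalence applies), to close the characterization of the equality case.
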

\begin{proof}
Denote, as before, $r$ as the ratio of a longer side by a shorter side and $\theta$ as the size of the obtuse angle between consecutive edges. In view of \eqref{eq:alpha1-parallelogram}, $\alpha_1(K)$ is the pointwise infimum of three continuous functions of $(r,\theta)\in (1,\infty)\times(\frac{\uppi}{2},\uppi)$, so it is continuous itself. For the first part of the claim, it is thus sufficient to show that the infimum of the set $\setcond{\alpha_1(K)}{K\in\CK^2\text{ is a \para{} parallelogram}}$ is $1$ and that the supremum of the set is $\sqrt{2}$.

For the infimum, we see that for every $\epsilon>0$, there is a \para{} parallelogram $K$ with side length ratio $r=1+\eps$ and thus, by \eqref{eq:alpha1-parallelogram}, also $\alpha_1(K)\leq 1+\eps$. Note that this is indeed an infimum and not a minimum since we are discussing \para{} parallelograms. 

Next, observe that 
\begin{equation}\label{eq:john-axis-ineq}
    \frac{r^2-2r\cos(\theta)-1}{\sqrt{(r^2-1)^2+4r^2\cos(\theta)^2}}\leq \sqrt{2}.
\end{equation}
Indeed, by squaring and substituting $y\defeq -2\cos(\theta)$, this is equivalent to $(r^2+yr-1)^2 \leq 2(r^2-1)^2 + 2y^2r^2$, which can be rearranged to $0 \leq r^4-2yr^3+(y^2-2)r^2+2yr+1 = \big(r(y-r)+1\big)^2$. This, together with \eqref{eq:alpha1-parallelogram}, shows $\alpha_1(K)\leq \sqrt{2}$. A particular example for a parallelogram $K$ with $\alpha_1(K)=\sqrt{2}$ is $K=\conv \setn{\pm (1,0), \pm (2,1)}$. This completes the proof of the first part of the claim. 

Now, observe that equality holds in \eqref{eq:john-axis-ineq} if and only if $r(r-y)=1$, which is in turn equivalent to $\cos(\theta)=\frac{1}{2}\big(\frac{1}{r}-r\big)$.
We claim that the latter condition is also equivalent to
\begin{equation}\label{eq:angle-bisectors-equality}
\frac{r^2-2r\cos(\theta)+1}{\sqrt{(r^2+1)^2-4r^2\cos(\theta)^2}} = r.
\end{equation}
Indeed, a straightforward computation shows for $\cos(\theta)=\frac{1}{2}\big(\frac{1}{r}-r\big)$ that
\begin{equation*}
\frac{r^2-2r\cos(\theta)+1}{\sqrt{(r^2+1)^2-4r^2\cos(\theta)^2}} = \frac{r^2-(1-r^2)+1}{\sqrt{(r^2+1)^2-(1-r^2)^2}}=\frac{2r^2}{\sqrt{4r^2}}=r.
\end{equation*}
Conversely, the numerator of the left-hand term in \eqref{eq:angle-bisectors-equality} is strictly decreasing in $\cos(\theta)$, whereas the denominator in strictly increasing in $\cos(\theta) \in (-1,0)$.
Thus, the entire left-hand term is strictly decreasing in $\cos(\theta) \in (-1,0)$, which means that $\cos(\theta)=\frac{1}{2}\big(\frac{1}{r}-r\big)$ is the only solution of \eqref{eq:angle-bisectors-equality}.
Altogether, in view of \eqref{eq:alpha1-parallelogram}, we have that $\alpha_1(K) = \sqrt{2}$ if and only if
\begin{equation*}
    \frac{r^2-2r\cos(\theta)+1}{\sqrt{(r^2+1)^2-4r^2\cos(\theta)^2}}
    = r
    \geq \sqrt{2}.
\end{equation*}

To complete the proof, it suffices to show that \eqref{eq:angle-bisectors-equality} is equivalent to the condition that $\theta$ equals the larger angle $\delta$ formed by the diagonals of $K$, see \cref{fig:diagonalbisector}.
To achieve this, let the vertices of $K$ be labeled as $a,b,c,d$, and denote the center of $K$ by $m=\frac{a+b+c+d}{4}$.
Assume that $[a,b]$, $[b,c]$, $[c,d]$, and $[d,a]$ are the edges of $K$ with $\gauge{a-b}=\gauge{c-d}=1$ and $\gauge{a-d}=\gauge{b-c}=r$. 

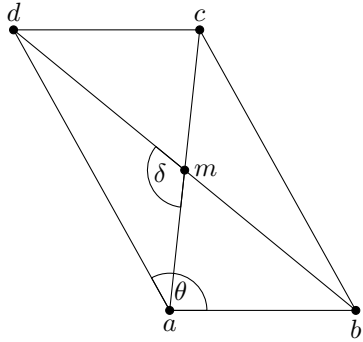
\begin{figure}[h!]
    \centering
\newcommand\scale{0.75}
\newcommand\ds{1.5/\scale pt}
\begin{tikzpicture}[line cap=round,line join=round,>=stealth,x=3.3cm,y=3.3cm, scale=\scale]

\coordinate (A) at (0,0);
\coordinate (B) at (1,0);
\coordinate (C) at (-0.8385864958115141,1.508151174209125);
\coordinate (D) at (0.16141350418848588,1.508151174209125);
\coordinate (M) at ($(B)!0.5!(C)$);
\coordinate (E) at ($(B)!0.5!(D)$);

\draw (0,0) -- (0.:0.2) arc (0.:119.07563926565886:0.2) -- cycle;
\draw (60:0.12) node{$\theta$};

\draw [shift={(M)}] (0,0) -- (140.6387408810172:0.2) arc (140.6387408810172:263.8910364471245:0.2) -- cycle;
\draw (-0.05,0.74) node{$\delta$};

\draw (A)-- (B)--(D)-- (C)--cycle;
\draw (A)-- (D);
\draw (B)-- (C);

\draw [fill=black] (M) circle (\ds) node[right]{$m$};
\draw [fill=black] (A) circle (\ds) node[below]{$a$};
\draw [fill=black] (B) circle (\ds) node[below]{$b$};
\draw [fill=black] (C) circle (\ds) node[above]{$d$};
\draw [fill=black] (D) circle (\ds) node[above]{$c$};
\end{tikzpicture}
    \caption{The obtuse angle formed by the diagonals of the parallelogram is denoted by $\delta$.}
    \label{fig:diagonalbisector}
\end{figure}

Applying the law of cosines to the triangles $\conv\setn{a,b,d}$ and $\conv\setn{a,b,c}$ gives
\begin{equation*}
    \gauge{b-d}^2=r^2+1-2r\cdot 1\cdot \cos(\theta) \qquad\text{and}\qquad \gauge{a-c}^2=r^2+1+2r\cdot 1\cdot \cos(\theta).
\end{equation*}
If we assume equality in \eqref{eq:angle-bisectors-equality}, or equivalently that, $\cos(\theta)=\frac{1}{2}\left(\frac{1}{r}-r\right)$, we get $\gauge{b-d}^2=2r^2$ and $\gauge{a-c}^2=2$.
Another application of the law of cosines in the triangle $\conv\setn{a,m,d}$ yields
\begin{equation*}
    r^2=\frac{\gauge{b-d}^2}{4}+\frac{\gauge{a-c}^2}{4}-2\frac{\gauge{b-d}}{2}\cdot\frac{\gauge{a-c}}{2}\cdot\cos(\delta),
\end{equation*}
which simplifies to $-2\cos(\delta)=r-\frac{1}{r}$.
The bijectivity of the cosine function on $(\frac{\uppi}{2},\uppi)$ shows that $\theta=\delta$.
The converse implication follows from \cref{rem:polarity}(ii).
\end{proof}

In the following, we describe a \enquote{phase diagram} for the reflection axis at which the Minkowski chirality $\alpha_1(K)$ of a given \para{} parallelogram is attained in terms of the side length ratio $r>1$ and the larger interior angle $\theta\in (\frac{\uppi}{2},\uppi)$.
This phase diagram is depicted in \cref{fig:phase-diagram-parallelogram} (left panel).

\begin{corollary}\label{thm:parallelogram-phase-diagram}
Let $K\in\CK^2$ be a \para{} parallelogram with side length ratio $r>1$ and larger interior angle $\theta\in (\frac{\uppi}{2},\uppi)$.
Define $\Psi_1,\Psi_2:(1,\infty)\to (\frac{\uppi}{2},\uppi)$ by
\begin{align*}
\Psi_1(r)&= 
\begin{cases}
\arccos(-\frac{1}{2}(r-\frac{1}{r})) & \text{if } r\in\big(1,\sqrt{2}\big),\\
\arccos(-\frac{1}{2}(\frac{-r^2+\sqrt{r^4+6r^2-7}+1}{2r})) & \text{if } r\geq\sqrt{2}
\end{cases}\\\intertext{and}
\Psi_2(r)&=
\begin{cases}
\arccos(-\frac{1}{2}(\frac{1}{r} + \sqrt{2-r^2})) & \text{if } r\in\big(1,\sqrt{2}\big),\\
\arccos(-\frac{1}{2}(\frac{-r^2+\sqrt{r^4+6r^2-7}+1}{2r})) & \text{if } r\geq\sqrt{2}.
\end{cases}
\end{align*}
Let further
\begin{align*}
\Dcal&\defeq\setcond{(r,\theta)^\top\in\R^2}{1<r,\;\frac{\uppi}{2}< \theta\leq \Psi_1(x)},\\
\Jcal&\defeq\setcond{(r,\theta)^\top\in\R^2}{1<r,\;\uppi>\theta\geq \Psi_2(x)}, \quad \text{and}\\
\Bcal&\defeq \left((1,\infty)\times \left(\frac{\uppi}{2},\uppi\right)\right)\setminus\big(\Jcal\cup\Dcal\big).
\end{align*}

Then $\alpha_1(K)=R(K,\Phi_U(K))$, where
\begin{enumerate}[label={(\roman*)},leftmargin=*,align=left,noitemsep]
\item{$U$ is the angle bisector of two consecutive edges of $K$ whenever $(r,\theta)\in \Bcal$,}
\item{$U$ is an angle bisector of the diagonals of $K$ whenever $(r,\theta)\in \Dcal$, and}
\item{$U$ is a principal axis of the John ellipse of $K$ whenever $(r,\theta)\in \Jcal$.}
\end{enumerate}
\end{corollary}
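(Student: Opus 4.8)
The plan is to read \eqref{eq:alpha1-parallelogram} as expressing $\alpha_1(K)$ as the pointwise minimum $\min\{f_B, f_D, f_J\}$ of three explicit functions of the parameters $r > 1$ and $\theta \in (\frac{\uppi}{2}, \uppi)$: here $f_B \defeq r$ is the dilation factor for the bisector of an angle between consecutive edges, while $f_D$ and $f_J$ are the factors for a diagonal bisector and a principal axis of the John ellipse, respectively, as computed in \cref{prop:angle-bisector-parallelogram}. Fixing $r$ and abbreviating $c \defeq \cos\theta \in (-1,0)$, the corollary is equivalent to deciding which of the three functions is smallest at each $(r,c)$ and checking that the induced subdivision of $(1,\infty) \times (\frac{\uppi}{2},\uppi)$ coincides with $\Dcal \cup \Bcal \cup \Jcal$. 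Since a minimum of three quantities is governed by their pairwise comparisons, I would first determine the three crossing loci $f_D = f_B$, $f_J = f_B$, and $f_D = f_J$ together with the sign of each difference, and then glue this information together, splitting into the branches $r < \sqrt2$ and $r \ge \sqrt2$.

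The comparison of $f_D$ with $f_B$ is already available from \cref{thm:range-parallelogram}, where $f_D$ is shown to be strictly decreasing in $c$ and to equal $r$ exactly when $c = \frac12(\frac1r - r)$; hence $f_D \le f_B \iff c \ge \frac12(\frac1r - r)$, and this threshold equals $\cos\Psi_1(r)$ on the branch $r < \sqrt2$. For $f_J$ against $f_B$, both sides are positive, so I would square and clear denominators to reduce $f_J \le f_B$ to $g(c) \ge 0$ for the upward parabola $g(c) \defeq 4r^2 c^2 + 4rc + (r^2-1)^2$, whose discriminant is a positive multiple of $2 - r^2$. Thus $f_J < f_B$ holds throughout for $r > \sqrt2$, whereas for $r \le \sqrt2$ the roots are $c_\pm = \frac{-1 \pm r\sqrt{2-r^2}}{2r} \in (-1,0)$ and $f_J \le f_B \iff c \le c_-$ or $c \ge c_+$, with $c_- = -\frac12(\frac1r + \sqrt{2-r^2}) = \cos\Psi_2(r)$.

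The comparison of $f_D$ with $f_J$ is the crux and the step I expect to be the main obstacle. Writing $N_D, N_J$ for the numerators and $M_D, M_J$ for the radicands and setting $p \defeq 2rc$, squaring reduces the sign of $f_D - f_J$ to that of $N_D^2 M_J - N_J^2 M_D = 2p\,(p^3 - 2r^2 p^2 + (r^2-1)^2 p + 2(r^4 - 1))$. The decisive observation is that the cubic factor vanishes at $p = r^2 + 1$, so it factors as $(p - (r^2+1))(p^2 + (1 - r^2)p - 2(r^2-1))$, the quadratic having roots $p_\pm = \frac12\big((r^2-1) \pm \sqrt{r^4 + 6r^2 - 7}\big)$. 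For $p \in (-2r,0)$ the prefactor $2p\,(p - (r^2+1))$ is positive and only $p_-$ lies in range, so that $f_D \le f_J \iff c \ge c^\ast$, where $c^\ast \defeq \frac{1}{4r}\big((r^2-1) - \sqrt{r^4+6r^2-7}\big)$ is precisely $\cos\Psi_1(r) = \cos\Psi_2(r)$ on the branch $r \ge \sqrt2$. Checking that $c_\pm, c^\ast \in (-1,0)$ and that the three crossing curves meet at the common point $c = -\frac{\sqrt2}{4}$ when $r = \sqrt2$ is then routine.

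Finally I would assemble the regions. For $r \in (1,\sqrt2)$ two short inequalities do the bookkeeping: $c_- < \frac12(\frac1r - r)$ (equivalent to $r^2 - 2 < r\sqrt{2-r^2}$, where the left side is negative) and $\frac12(\frac1r - r) < c_+$ (equivalent to $\sqrt{2-r^2} < r$, i.e.\ $r > 1$); together they give $\Psi_1 < \Psi_2$, so $\Bcal$ is a genuine strip. Evaluating $f_D - f_J$ at $c = \frac12(\frac1r - r)$ (where $f_D = f_B < f_J$) and at $c = c_-$ (where $f_J = f_B < f_D$) then pins down $c_- < c^\ast < \frac12(\frac1r - r)$, from which the three cases follow: on $\Dcal$ one has $f_D \le f_B$ and $f_D \le f_J$; on $\Jcal$ one has $f_J \le f_B$ and $f_J \le f_D$; and on $\Bcal$ one has $f_B < f_D$ and, since $c \in (c_-, c_+)$, also $f_B < f_J$. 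For $r \ge \sqrt2$ the inequality $f_J < f_B$ is global, so $f_B$ is never minimal, $\Bcal$ degenerates to the boundary curve, and the single dichotomy $f_D \le f_J \iff c \ge c^\ast$ divides the strip into $\Dcal$ and $\Jcal$ along $\theta = \Psi_1(r) = \Psi_2(r)$. Reading off the corresponding reflection axis from \cref{prop:angle-bisector-parallelogram} in each case then yields the claim.
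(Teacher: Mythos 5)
Your proposal is correct and follows essentially the same route as the paper's proof: both reduce the claim via \eqref{eq:alpha1-parallelogram} to pairwise comparisons of the three dilation factors, locate the crossing curves by the same factorizations (your cubic in $p = 2r\cos\theta$ is exactly the paper's cubic in $y = -2\cos\theta$, with your root $p = r^2+1$ corresponding to the paper's root $y = -\tfrac{r^2+1}{r}$), and assemble the regions by splitting into the cases $r < \sqrt{2}$ and $r \geq \sqrt{2}$. If anything, your assembly is slightly more complete, since your evaluation trick at the two known crossing points proves the ordering $c_- < c^\ast < \tfrac{1}{2}\bigl(\tfrac{1}{r} - r\bigr) < c_+$ (equivalently $f_{JB,1} < f_{DB} < f_{JD} < f_{JB,2}$ in the paper's notation), which the paper's proof only asserts.
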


\begin{proof}
Set $y\defeq -2\cos(\theta)$ and $\Omega\defeq (1,\infty)\times (0,2)$.
Given our restrictions on $r$ and $\theta$, it is easy to see that the coordinate transform $(r,\theta) \mapsto (r,y)$ is one-to-one.
Further, define $B,D,J:\Omega\to\R$ by
\begin{equation*}
    B(r,y) = r^2,
        \quad
    D(r,y) = \frac{(r^2+ry+1)^2}{(r^2+1)^2-r^2y^2},
        \quad \text{and} \quad
    J(r,y) = \frac{(r^2+ry-1)^2}{(r^2-1)^2+r^2y^2}.
\end{equation*}
These expressions are equal to $R(K,\Phi_U(K))^2$, where $U$ is the angle bisector of two consecutive edges of $K$, an angle bisector of the diagonals of $K$, or the major axis of $\John(K)$,  respectively.
By \cref{thm:cool-proof,prop:angle-bisector-parallelogram}, proving the claim can be recast as determining the domains in $\Omega$ where each of $B$, $D$, and $J$ is minimal.
To do so, we rewrite the sets of pairs $(r,y)\in\Omega$ for which either $J(r,y)=D(r,y)$, $D(r,y)=B(r,y)$, or $J(r,y)=B(r,y)$ as graphs of functions of $r$.
(Parts of these graphs appear as solid lines in the left panel of \cref{fig:phase-diagram-parallelogram}.)
Depended on the order of these functions, we then distinguish cases to find the desired regions.

We have
\begin{align*}
&J(r,y)=D(r,y)\\
\Leftrightarrow \quad& r^6y^2+2r^5y^3+r^4y^4+r^2y^2-2r^5y-2r^4y^2+2ry = 0 \\
\Leftrightarrow \quad& r^3y^3 + 2r^4y^2+(r^5-2r^3+r)y+2-2r^4 = 0 \\
\Leftrightarrow \quad& \Big(y-\frac{-r^2-1}{r}\Big)\Big(y-\frac{-r^2-\sqrt{r^4+6r^2-7}+1}{2r}\Big)\Big(y-\frac{-r^2+\sqrt{r^4+6r^2-7}+1}{2r}\Big) = 0.
\end{align*}
From $r>1$, we know that both $\frac{-r^2-1}{r}$ and $\frac{-r^2-\sqrt{r^4+6r^2-7}+1}{2r}$ are negative.
Therefore, since we are looking for $y\in(0,2)$, the only potential solution is 
\begin{equation}\label{eq:jd}
y=\frac{-r^2+\sqrt{r^4+6r^2-7}+1}{2r}.
\end{equation}
Let us also remark that $r>1$ implies $r^4+6r^2-7=(r^2+7)(r^2-1)>(r^2-1)^2>0$.
Thus, $y$ from \eqref{eq:jd} is positive.
A direct computations shows that $\frac{-r^2+\sqrt{r^4+6r^2-7}+1}{2r}<2$ is equivalent to $(r^2-1)(r+1)>0$, which is again satisfied for $r>1$.

Thus, we have $J(r,y)=D(r,y)$ for $(r,y)^\top\in\Omega$ if and only if $(r,y)^\top$ is an element of the graph of the function $f_{JD}: (1,\infty)\to\R$ given by $f_{JD}(r)=\frac{-r^2+\sqrt{r^4+6r^2-7}+1}{2r}$.

Similarly, we get $J(r,y) < D(r,y)$ if $y>f_{JD}(r)$, and $J(r,y) > D(r,y)$ if $y<f_{JD}(r)$.

Next, consider 
\begin{align*}
&D(r,y)=B(r,y)\\
\Leftrightarrow\quad & r^4 + r^2y^2+1+2r^3y+2r^2+2ry=r^6+2r^4+r^2-r^4y^2 \\ 
\Leftrightarrow\quad & (r^2+1)(r^2y^2+2ry+1-r^4)=0 \\
\Leftrightarrow\quad & r^2y^2+2ry +1-r^4= 0 \\
\Leftrightarrow\quad & \Big(y-\Big(-r-\frac{1}{r}\Big)\Big)\Big(y-\Big(r-\frac{1}{r}\Big)\Big) = 0.
\end{align*}
Clearly, we have $-r-\frac{1}{r}<0$ when $r>1$.
Since we are looking for $y\in(0,2)$, the only potential solution is 
\begin{equation}\label{eq:jb}
y=r-\frac{1}{r}.
\end{equation}
Since $r>1$, we have $0<y$.
Moreover, if $r>1$ and $y<2$ in \eqref{eq:jb}, then $r<1+\sqrt{2}$.
Thus, we have $D(r,y)=B(r,y)$ for $(r,y)^\top\in\Omega$ if and only if $(r,y)^\top$ is an element of the graph of the function $f_{DB}: (1,1+\sqrt{2})\to\R$ given by $f_{DB}(r)=r-\frac{1}{r}$.
Similarly, we obtain $D(r,y) < B(r,y)$ if $y<f_{DB}(r)$ or $r\geq 1+\sqrt{2}$, and $D(r,y) > B(r,y)$ if $y>f_{DB}(r)$.

Finally, for $r\in(1,\sqrt{2}]$ we consider
\begin{align*}
&J(r,y)=B(r,y)\\
\Leftrightarrow\quad & r^4 + r^2y^2+1+2r^3y-2r^2-2ry=r^6-2r^4+r^2+r^4y^2 \\ 
\Leftrightarrow\quad & (r^2-1)(r^2y^2-2ry+(r^2-1)^2)=0 \\
\Leftrightarrow\quad & r^2y^2-2ry +(r^2-1)^2= 0 \\
\Leftrightarrow\quad & \Big(y-\Big(\frac{1}{r}-\sqrt{2-r^2}\Big)\Big)\Big(y-\Big(\frac{1}{r}+\sqrt{2-r^2}\Big)\Big) = 0.
\end{align*}
Moreover, if $r\in(1,\sqrt{2}]$, we have $1>\frac{1}{r}>\sqrt{2-r^2}$, i.e., $(r,\frac{1}{r}-\sqrt{2-r^2})^\top,(r,\frac{1}{r}+\sqrt{2-r^2})^\top\in \Omega$.
Thus, $J(r,y)=B(r,y)$ for $(r,y)^\top\in\Omega$ if and only if  $(r,y)^\top$ is an element of the graph of one of the functions $f_{JB,1},f_{JB,2}:(1,\sqrt{2}]\to\R$ given by $f_{JB,1}(r)=\frac{1}{r}-\sqrt{2-r^2}$ and $f_{JB,2}(r)=\frac{1}{r}+\sqrt{2-r^2}$.

Similarly, we obtain $B(r,y) < J(r,y)$ for $r\in(1,\sqrt{2})$ and $y\in (f_{JB,1}(r),f_{JB,2}(r))$ on the one hand, and $B(r,y) > J(r,y)$ for $r\in(1,\sqrt{2}]$ and $y\notin [f_{JB,1}(r),f_{JB,2}(r)]$, and for $r>\sqrt{2}$ on the other hand.

Observe that $f_{JB,1}(\sqrt{2})=f_{DB}(\sqrt{2})=f_{JD}(\sqrt{2})=f_{JB,2}(\sqrt{2})=\frac{1}{\sqrt{2}}$ motivates to look at the cases $r<\sqrt{2}$ and $r>\sqrt{2}$ separately.
As previously mentioned, we have $B(r,y)>J(r,y)$ for $(r,y)^\top\in\Omega$ with $r>\sqrt{2}$, or $r=\sqrt{2}$ and $y\neq \frac{1}{\sqrt{2}}$.
This means that the minimum among $B(r,y)$, $J(r,y)$, and $D(r,y)$ is one of the latter two numbers in these cases.
Hence, for $r\geq \sqrt{2}$, only $f_{JD}$ is relevant for partioning $\Omega$ according to the minimum of $B$, $D$, and $J$.

On the other hand, if $1<r<\sqrt{2}$, then $f_{JB,1}(r)<f_{DB}(r)<f_{JD}(r)<f_{JB,2}(r)$.
Consequently, if $y<f_{DB}(r)$, then $y<f_{JD}(r)$, i.e., $D(r,y)<B(r,y)$ and $D(r,y)<J(r,y)$.
This means that if $(r,y)^\top\in\Omega$ is such that $1<r<\sqrt{2}$ and $y<f_{DB}(r)$, the minimum among $B(r,y)$, $J(r,y)$, and $D(r,y)$ is $D(r,y)$.
Similarly, one finds that the minimum is $B(r,y)$ when $f_{DB}(r)<y<f_{JB,2}(r)$, and $J(r,y)$ for $y>f_{JB,2}(r)$.

Putting everything together, let us define the functions $\widetilde{\Psi}_1,\widetilde{\Psi}_2:(1,\infty)\to(0,2)$ given by 
\begin{align*}
\widetilde{\Psi}_1(r)&= 
\begin{cases}
r-\frac{1}{r} & \text{if } r\in\big(1,\sqrt{2}\big),\\
\frac{-r^2+\sqrt{r^4+6r^2-7}+1}{2r} & \text{if } r\geq\sqrt{2}
\end{cases}\\\intertext{and}
\widetilde{\Psi}_2(r)&=
\begin{cases}
\frac{1}{r} + \sqrt{2-r^2} & \text{if } r\in\big(1,\sqrt{2}\big),\\
\frac{-r^2+\sqrt{r^4+6r^2-7}+1}{2r} & \text{if } r\geq\sqrt{2}.
\end{cases}
\end{align*}
If we set $\widetilde{\Dcal}\defeq\setcond{(r,y)^\top\in\R^2}{1<r,\;0<y\leq\widetilde{\Psi}_1(r)}$, $\widetilde{\Jcal}\defeq\setcond{(r,y)^\top\in\R^2}{1<r,\;2>y\geq\widetilde{\Psi}_2(r)}$, and $\widetilde{\Bcal}\defeq\Omega\setminus\big(\widetilde{\Jcal}\cup\widetilde{\Dcal}\big)$, we have that $D$ is optimal in $\widetilde{\Dcal}$, $J$ is optimal in $\widetilde{\Jcal}$, and $B$ is optimal in $\widetilde{\Bcal}$.

Returning to the original coordinates $r$ and $\theta$, we just need to define the functions $\Psi_1,\Psi_2:(1,\infty)\to (\frac{\uppi}{2},\uppi)$ branch-wise by 
$\Psi_1(r) = \arccos(-\frac{1}{2}\widetilde{\Psi}_1(r))$ and $\Psi_2(r) = \arccos(-\frac{1}{2}\widetilde{\Psi}_2(r))$, where we choose the principal branch of the $\arccos$ function.
The domains $\Dcal, \Jcal, \Bcal$ can be obtained in the obvious way.
Note that inside the definition of these domains, the signs of the orderings involving $y$ are not perturbed by this transformation.
This is due to $\arccos$ being decreasing on its principal branch.
For example, the condition $y<r-\frac{1}{r}$ simply becomes $\theta<\arccos\Big(-\frac{1}{2}\big(r-\frac{1}{r}\big)\Big)$.
\end{proof}

\begin{figure}[h!]
    \centering
\begin{tikzpicture}
\begin{axis}[height=3cm,scale only axis,ymin=pi/2,ymax=pi,xmin=1,xmax=3,ytick={0.5*pi,0.625*pi,0.75*pi,0.875*pi,pi},yticklabels={$\frac{\uppi}{2}$,$\frac{5\uppi}{8}$,$\frac{3\uppi}{4}$,$\frac{7\uppi}{8}$,$\uppi$},ylabel={$\theta$},xlabel={$r$},xmajorgrids={true},ymajorgrids={true},label style={font=\tiny},tick label style={font=\tiny},xticklabel style={text height=2em,yshift=1em}]
\addplot[line width=1pt,domain=1:sqrt(2),samples=200] ({x},{rad(acos(-0.5*(x-1/x)))}); 
\addplot[line width=1pt,domain=1:sqrt(2),samples=200] ({x},{rad(acos(-0.5*(sqrt(2-x^2)+1/x)))}); 
\addplot[line width=1pt,domain=sqrt(2):3,samples=200] ({x},{rad(acos(-0.5*((-x^2 + sqrt(x^4 + 6*x^2 - 7) + 1)/(2*x))))});
\addplot[line width=1pt,dashed,domain=1.93216:pi,samples=200,trig format=rad] ({-cos(x)+sqrt(cos(x)^2+1)},{x});
\draw (axis cs:2.5,2.5) node{$\Jcal$};
\draw (axis cs:2,1.7) node{$\Dcal$};
\draw (axis cs:1.2,2) node{$\Bcal$};
\end{axis}
\end{tikzpicture}
\quad
\begin{tikzpicture}
\begin{axis}[height=3cm,scale only axis,xmin=pi/2,xmax=pi,ymin=pi/2,ymax=pi,xtick={0.5*pi,0.625*pi,0.75*pi,0.875*pi,pi},xticklabels={$\frac{\uppi}{2}$,$\frac{5\uppi}{8}$,$\frac{3\uppi}{4}$,$\frac{7\uppi}{8}$,$\uppi$},ytick={0.5*pi,0.625*pi,0.75*pi,0.875*pi,pi},yticklabels={$\frac{\uppi}{2}$,$\frac{5\uppi}{8}$,$\frac{3\uppi}{4}$,$\frac{7\uppi}{8}$,$\uppi$},ylabel={$\theta$},xlabel={$\delta$},xmajorgrids={true},ymajorgrids={true},label style={font=\tiny},tick label style={font=\tiny}]
\addplot[line width=1pt,domain=pi/2:1.93216,samples=200] ({x},{x});
\addplot[line width=1pt,dashed,domain=1.93216:pi,samples=200] ({x},{x});
\addplot[line width=1pt,mark=none] table [x=theta, y=delta, col sep=space] {data.txt};
\addplot[line width=1pt,mark=none] table [y=theta, x=delta, col sep=space] {data.txt};
\draw (axis cs:2.5,2.2) node{$\Jcal$};
\draw (axis cs:2,1.7) node{$\Dcal$};
\draw (axis cs:1.7,2) node{$\Bcal$};
\end{axis}
\end{tikzpicture}
\quad
\begin{tikzpicture}
\begin{axis}[height=3cm,scale only axis,xmin=0,xmax=1,ymin=0,ymax=1,ylabel={$y$},xlabel={$x$},xmajorgrids={true},ymajorgrids={true},label style={font=\tiny},tick label style={font=\tiny},xticklabel style={text height=2em,yshift=1em}]
\addplot[line width=1pt,domain=0:90,samples=200] ({cos(x)},{sin(x)}); 
\addplot[line width=1pt,domain=135:152.11443,samples=200] ({-sqrt(2)*cos(x)-1},{sqrt(2)*sin(x)}); 
\addplot[line width=1pt,dashed,domain=152.11443:180,samples=200] ({-sqrt(2)*cos(x)-1},{sqrt(2)*sin(x)}); 
\addplot[line width=1pt,mark=none] table [y=yval, x=xval, col sep=space] {data2.txt};
\addplot[line width=1pt,mark=none] table [y=yval, x=xval, col sep=space] {data3.txt};
\draw (axis cs:0.6,0.2) node{$\Jcal$};
\draw (axis cs:0.5,0.7) node{$\Dcal$};
\draw (axis cs:0.1,0.6) node{$\Bcal$};
\end{axis}
\end{tikzpicture}

    \caption{We parametrize \para{} parallelograms $K$ in three different ways by the larger interior angle $\theta\in (\frac{\uppi}{2},\uppi)$, the side length ratio $r\in (1,\infty)$, the larger angle $\delta\in (\frac{\uppi}{2},\uppi)$ formed by the diagonals, and the coordinates $x,y\in\R$.
    (In the latter case, we assume that the vertices of $K$ are $\pm (1,0)$ and $\pm (x,y)$, where $x^2+y^2<1$ and $x,y > 0$.) The solid lines separate regions which correspond to the reflection axis at which the Minkowski chirality $\alpha_1$ is attained: a bisector of an interior angle for $\Bcal$, an angle bisector of the diagonals for $\Dcal$, and a principal axis of the John ellipse for $\Jcal$.
    The dashed line indicates the parallelograms with $\alpha_1(K) = \sqrt{2}$.
    \label{fig:phase-diagram-parallelogram}}
\end{figure}
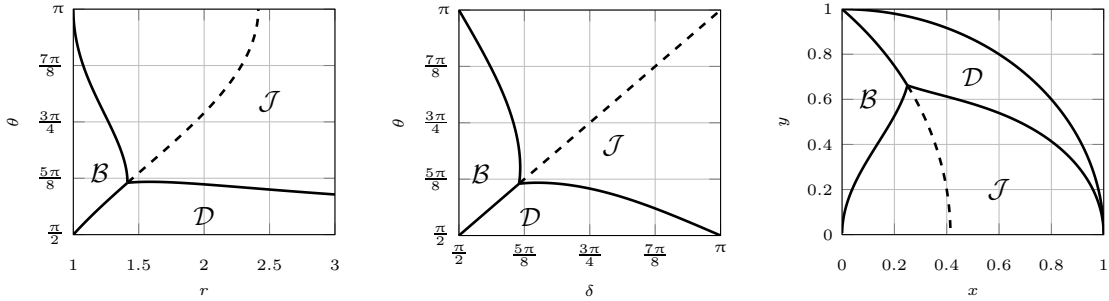

Next, we comment on how to derive phase diagrams for two different parameterizations of parallelograms.
In particular, we identify the pairs of parameters that correspond to parallelograms $K$ for which two of our three candidate reflection axes $U$ attain $\alpha_1(K)$.

In \cref{rem:polarity}, we saw that the diagram in \cref{fig:phase-diagram-parallelogram} becomes symmetric if in the parameterization of \para{} parallelograms we replace the side length ration $r$ by the angle $\delta\in (\frac{\uppi}{2},\uppi)$ formed by the intersection of the diagonals.
Then the pairs $(\delta,\theta)$ that represent \para{} parallelograms $K$ for which the reflections across the bisector of the interior angle $\theta$ and the bisector of the diagonal angle $\delta$ give the same circumradius $R(K,\Phi_U(K))$ are given by $\delta=\theta$.
Among those, there are also the \para{} parallelograms $K$ with $\alpha_1(K)=\sqrt{2}$.

In \cref{thm:parallelogram-phase-diagram}, we saw that the \para{} parallelograms $K$ for which the bisector of the interior angle $\theta$ and the principal axes of $\John(K)$ give the same value for $R(K,\Phi_U(K))$ are given by the side length ratio $r\in (1,\sqrt{2})$ and the identity $-2\cos(\theta)=\frac{1}{r}+\sqrt{2-r^2}$.
For such a parallelogram, the squared lengths of the diagonals can be computed as $r^2+1-2r\cos(\theta)=r^2+2+r\sqrt{2-r^2}$ and $r^2+1+2r\cos(\theta)=r^2-r\sqrt{2-r^2}$ due to the law of cosines, see again \cref{fig:diagonalbisector}.
Hence, another application of the law of cosines yields the following relation between the side length ratio $r$ and the obtuse angle $\delta$ formed by the diagonals of the parallelogram:
\begin{equation*}
    r^2=\frac{r^2+1}{2}-\frac{\sqrt{(r^2+2+r\sqrt{2-r^2})(r^2-r\sqrt{2-r^2})}}{2}\cdot \cos(\delta).
\end{equation*}
This is equivalent to
\begin{equation*}
 \delta =\arccos\left(\frac{1-r^2}{\sqrt{(r^2+2+r\sqrt{2-r^2})(r^2-r\sqrt{2-r^2})}}\right).
\end{equation*}
In the parametrization by $\delta$ and $\theta$, the \para{} parallelograms $K$ for which the bisector of the interior angle $\theta$ and the principal axes of $\John(K)$ give the same value for $R(K,\Phi_U(K))$ are thus represented by the parameterized curve
\begin{equation*}
    (\delta,\theta)=\left(\arccos\left( \frac{1-r^2}{\sqrt{(r^2+2+r\sqrt{2-r^2})(r^2-r\sqrt{2-r^2})}}\right),\arccos\left(-\frac{1}{2}\left(\frac{1}{r}+\sqrt{2-r^2}\right)\right)\right),
\end{equation*}
where $r\in (1,\sqrt{2})$.
Using the symmetry of this parameterization, we also get the pairs $(\delta,\theta)$ representing \para{} parallelograms $K$ for which the bisector of the angle $\delta$ formed by the diagonals and the principal axes of $\John(K)$ give the same value for $R(K,\Phi_U(K))$, see the middle panel of \cref{fig:phase-diagram-parallelogram}.

As a third and last parametrization, let us consider \para{} parallelograms with vertices $\pm (1,0)^\top$ and $\pm (x,y)^\top$, where $x^2+y^2<1$ and $x,y>0$.
This family of parallelograms consist of one representative of each similarity class, each point-symmetric with respect to the coordinate origin and having the longer diagonal of length $2$.
The \para{} parallelograms $K$ for which the reflections across the bisector of the interior angle $\theta$ and the bisector of the diagonal angle $\delta$ give the same circumradius $R(K,\Phi_U(K))$ are those for which the side length ratio equals the diagonal length ratio.
In this parameterization, 
\begin{equation*}
    \frac{4}{4(x^2+y^2)}=r^2=\frac{(x+1)^2+y^2}{(x-1)^2+y^2},
\end{equation*}
i.e., $(x+1)^2+y^2=2$.
Among those, there are also the \para{} parallelograms $K$ with $\alpha_1(K)=\sqrt{2}$.
It remains to locate the parallelograms $K$ for which $\alpha_1(K)$ is obtained by the reflection across the principal axis of $\John(K)$ and one of the other two candidates.
The pairs $(x,y)$ corresponding to \para{} parallelograms with side length ratio $r>1$ are given by
\begin{equation*}
    r=\frac{\sqrt{(x+1)^2+y^2}}{\sqrt{(x-1)^2+y^2}}.
\end{equation*}
Squaring and rearranging yields the equation of the circle $C_1(r)$ with midpoint $(\frac{r^2+1}{r^2-1},0)^\top$ and radius $\sqrt{\frac{(r^2+1)^2}{(r^2-1)^2}-1}$.
Similarly, the pairs $(x,y)$ corresponding to \para{} parallelograms with obtuse interior angle $\theta$ belong to the circle $C_2(\theta)$ with midpoint  $\left(0,-\sqrt{\frac{1}{\sin(\theta)^2}-1}\right)^\top$ and radius $\frac{1}{\sin(\theta)}$.
For every $r>1$ and $\theta\in (\frac{\uppi}{2},\uppi)$, the circles $C_1(r)$ and $C_2(\theta)$ have two intersection points $(x,y)^\top$.
Precisely one of them, call it $v(r,\theta)$, has $x,y>0$.
The common boundary of the domains $\mathcal{J}$ and $\mathcal{B}\cup \mathcal{D}$ in the right panel of \cref{fig:phase-diagram-parallelogram} is then obtained as the set of the points $v(r,\theta)$ for $r>1$ and $\theta=\Psi_2(r)$, with $\Psi_2$ from \cref{thm:parallelogram-phase-diagram}.
Alternatively, by taking \labelcref{eq:alpha1-parallelogram,eq:john-lambda} into account, one arrives at
\begin{equation*}
\alpha_1(K)=\min\setn{\frac{\sqrt{(x+1)^2+y^2}}{\sqrt{(x-1)^2+y^2}},\frac{1}{\sqrt{x^2+y^2}},\frac{1-x^2-y^2 +2x}{\sqrt{(1+x^2-y^2)^2+(2xy)^2}}},
\end{equation*}
for which one could do an analysis analogous to that of \cref{thm:parallelogram-phase-diagram}.
 
\section{Triangles}\label{chap:triangle}

In this section, we prove \cref{thm:triangle}.
By \cref{thm:dbm}, any triangle $K \in \CK^2$ satisfies $\alpha_1(K)\geq 1$, with equality precisely if $K$ is an isosceles triangle.
Therefore, it remains to prove \cref{thm:triangle} for scalene triangles $K$, i.e., those in which all three sides are of different lengths.
To this end, we prove in \cref{thm:under-rotation,thm:triangle-candidates} that if $\alpha_1(K)=R(K,\Phi_U(K))$ for some straight line $U$, then $U$ is parallel to the bisector of one of the interior angles or perpendicular to one of the edges.
In \cref{thm:dilation-factors}, we find the circumradii of triangles with respect to their mirror images upon reflection across straight lines parallel to angle bisectors or perpendicular to edges.
In \cref{thm:triangle-rule-out}, we reduce the list of candidates for the optimal reflection axis to the bisectors of the largest and smallest interior angles and the perpendicular lines to the longest edge, and give an explicit formula for $\alpha_1(K)$ in terms of the side lengths of the triangle.
Last, we describe a \enquote{phase diagram} for the optimal reflection axes in \cref{thm:triangle-phases}, i.e., we provide parametrizations of sets of representatives of the similarity classes of triangles and establish explicit conditions on the parameters which tell us which of the candidates for the optimal reflection axis is the one that attains $\alpha_1$.

As used in the previous section for parallelograms, one may for $U,U' \in \gr{1}{\R^2}$ understand the mirror image $\Phi_{U'}(K)$ as the image of $\Phi_U(K)$ under a rotation.
We are therefore interested in finding the largest dilate of a rotation of $\Phi_{U}(K)$ that fits in $K$.
Formally, we say for $K,L \in \CK^2$ with $K \subset L$ that $K$ is \cemph{optimally contained} in $L$ \cemph{under rotation} if for all rotations $\Psi:\R^2\to\R^2$, $\lambda > 1$, and $x \in \R^2$, we have $\lambda \Psi(K) + x \not\subset L$.

\begin{proposition}\label{thm:under-rotation}
Let $S,T \in \CK^2$ be triangles such that $S$ is optimally contained in $T$ under rotation.
Then at least one edge of $S$ is contained in an edge of $T$.
\end{proposition}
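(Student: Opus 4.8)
The plan is to argue by contraposition: assuming that no edge of $S$ lies in an edge of $T$, I would show that $S$ is \emph{not} optimally contained in $T$ under rotation, i.e.\ that some rotated and strictly enlarged copy of $S$ still fits inside $T$. First I would pin down the contact structure. Since $S\subset T$ and $S$ cannot even be enlarged by translation and dilation alone (a special case of rotational optimality), \cref{thm:brandenberg-koenig} forces the outer normals of $T$ at the contact facets to positively span $\R^2$; for a triangle this means every edge of $T$ meets $S$. If an edge of $T$ met $S$ in more than a point, that contact segment would lie in an edge of $S$, and as $S\subset T$ this would place a whole edge of $S$ inside an edge of $T$, contrary to assumption. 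Hence each edge of $T$ touches $S$ only in isolated points, each of which is a vertex of $S$ (the boundary subcase, a vertex of $T$ sitting on an edge of $S$, I would absorb into the same normal-cone bookkeeping, since such a corner contributes the two incident edge-normals of $T$). The generic configuration is therefore three vertices $p_1,p_2,p_3$ of $S$ in the relative interiors of the three edges $e_1,e_2,e_3$ of $T$, with outer unit normals $u_1,u_2,u_3$.

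The core engine is an infinitesimal rigid-motion-plus-dilation perturbation. Writing an infinitesimal motion of $S$ as $\dot p=\omega Jp+v+s\,p$, with angular velocity $\omega$, translation $v\in\R^2$, dilation rate $s$, and $J$ the rotation by $\tfrac{\uppi}{2}$, the requirement that no contact be pushed outward while strictly enlarging ($s>0$) becomes the linear feasibility problem
\[
\omega\,(u_i^\top J p_i)+u_i^\top v+s\,h_T(u_i)\le 0,\qquad i=1,2,3.
\]
I would invoke a theorem of the alternative (Farkas/Gordan). The only nonnegative normal dependence is the positive relation $\sum_i c_i u_i=0$, where $c_i>0$ are the edge lengths of $T$, and since $\sum_i c_i\,h_T(u_i)=2\vol(T)>0$, the system is solvable with $s>0$ unless the single scalar obstruction $\sum_i c_i\,(u_i^\top J p_i)=0$ holds. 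In every case where this obstruction fails, $S$ can be rotated and strictly enlarged inside $T$, contradicting optimality; this disposes of all but one configuration.

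The main difficulty is precisely the remaining \emph{balanced} case $\sum_i c_i\,(u_i^\top J p_i)=0$, in which the above system is infeasible: here $S$ is genuinely infinitesimally rigid, so no first- or second-order argument can conclude. (The medial triangle of an equilateral $T$ realizes this situation: all three vertices sit at edge midpoints, the obstruction vanishes, and one can enlarge only by the finite $\uppi$-rotation that sends it to $T$ itself.) To rule out such configurations being \emph{globally} optimal, I would pass to the one-parameter family of inscribed copies of the fixed shape of $S$ with one vertex on each edge-line of $T$ -- these are related by spiral similarities about a common pivot -- and track the scale as the orientation runs over the full circle, showing that a balanced interior critical point is only a spurious local maximum: following the family until a vertex reaches a corner of $T$, or an edge of $S$ becomes flush, yields a strictly larger admissible copy and hence the desired contradiction, forcing the extremal copy to be edge-flush. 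I expect this global step, demonstrating that the balanced interior configuration is never the overall maximum, to be the principal obstacle, since every local optimality condition is satisfied there and a genuinely non-local comparison is unavoidable.
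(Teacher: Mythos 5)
Your contact-structure reduction and your Farkas step are essentially correct: with the three contact vertices $p_i$ in the relative interiors of the edges of $T$, the only nonnegative dependence among the outer normals is $\sum_i c_i u_i=0$ with $c_i$ the edge lengths, $\sum_i c_i h_T(u_i)=2\vol(T)>0$, so your linearized system is solvable with $s>0$ precisely when $\sum_i c_i\,u_i^\top Jp_i\neq 0$ (and solvability upgrades to strict solvability, so second-order error terms are harmless). The genuine gap is the balanced case $\sum_i c_i\,u_i^\top Jp_i=0$, which you leave as an ``expectation'', and the picture guiding your sketch of it is wrong. A balanced configuration is not a point where ``every local optimality condition is satisfied'' requiring a ``genuinely non-local comparison''; it is a strict local \emph{minimum} of the scale along the very inscribed family you invoke. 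The directly similar copies of $S$ with one vertex on each edge line of $T$ form a pencil $z\mapsto\zeta(t)z+\tau(t)$ in which $\zeta(t)$ traces an affine line in $\C$ not through $0$ (the three edge lines are not concurrent), so the scale as a function of orientation obeys a secant law $\rho(\theta)=\rho_0\cos\phi/\cos(\theta+\phi)$; for any first-order motion keeping all three contacts on their lines one has $s=-\omega\,\bigl(\sum_i c_i u_i^\top Jp_i\bigr)/(2\vol(T))$, so your balancedness condition is exactly $\rho'(0)=0$, i.e.\ $\phi=0$, whence $\rho(\theta)=\rho_0\sec\theta>\rho_0$ for all small $\theta\neq 0$. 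These slightly rotated, strictly larger copies still have their vertices on the edges (the vertices start in relative interiors), hence lie in $T$: contradiction. In particular your test case is false: the medial triangle of an equilateral $T$ is beaten by arbitrarily small rotations (inscribed scale $\tfrac{1}{2}\sec\theta$), not only by the $\uppi$-rotation onto $T$. So the balanced case does admit a purely local, second-order resolution, but you prove neither this nor your proposed global alternative; moreover, balanced configurations also arise when a vertex of $S$ coincides with a vertex of $T$ (e.g.\ when the segment to the opposite contact is an altitude of $T$), where the three-line family does not even apply as stated. As written, the proof is incomplete exactly at the point you flag as the principal obstacle.

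For contrast, the paper avoids the balanced/unbalanced dichotomy altogether by an exact (not first-order) construction: in the vertex-disjoint case it rotates $S$ about one contact vertex $u$ and then translates parallel to the edge of $T$ containing $u$ by exactly the signed distance that returns the exited vertex to its edge line; an ordering argument for two affine functionals (essentially your obstruction split into two coefficients $c_v,c_w$) shows that for one sign of the rotation angle the resulting copy lies in $T$ and misses an edge of $T$, after which \cref{thm:brandenberg-koenig} supplies the strict enlargement. Because that copy gains no scale at first order, the argument is insensitive to whether your linearized system is feasible, which is precisely why the paper needs no separate treatment of the balanced case.
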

\begin{proof}
    Denote by $u$, $v$, $w$ the vertices of $S$, and by $a$, $b$, $c$ the vertices of $T$.
    Since $S$ is optimally contained in $T$ under rotation, we have $R(S,T) = 1$. Based on the observation following \cref{thm:brandenberg-koenig}, we distinguish three cases.\\[\baselineskip]
    \emph{Case 1: $S$ and $T$ share at least two vertices.}
     Since $S$ and $T$ have a common edge, we are done.  

    \emph{Case 2: $S$ and $T$ share precisely one vertex, say $u$.}
    We may assume that neither of the remaining vertices of $S$ is contained in an edge of $T$ incident to $u$.
    Since $R(S,T) = 1$, at least one vertex of $S$, say $v$, must be an element of the relative interior of the edge of $T$ opposite $u$.
    We are done if $w$ is also an element of this edge, so assume for a contradiction that $w \in \inte(T)$.
    Then
    there is a rotation $\Psi$ around $u$ (with a sufficiently small rotation angle and an appropriately chosen rotation direction) such that $\Psi(v)$ and $\Psi(w)$ both lie in $\inte(T)$.
    This means that $\Psi(S)\subset T$ and $\Psi(S)$ intersects only two edges of $T$.
    By \cref{thm:brandenberg-koenig}, $\Psi(S)$ is not optimally contained in $T$, so $S$ is not optimally contained in $T$ under rotation.
    This is the desired contradiction.
    \\[\baselineskip]
    \emph{Case 3: $S$ and $T$ do not share any vertices.}
    By $R(S,T) = 1$, the vertices $u$, $v$, $w$ of $S$ are elements of the relative interiors of the edges $E_u$, $E_v$, $E_w$ of $T$, respectively.
    Denote the straight lines supporting $T$ at $E_u=[a,b]$, $E_v=[b,c]$, $E_w=[c,a]$ by $F_u$, $F_v$, $F_w$, respectively.
Let
\begin{equation*}
    \Psi_\varphi:\R^2\to\R^2,\qquad \Psi_\varphi(x)=u+\begin{pmatrix}\cos(\varphi)&-\sin(\varphi)\\\sin(\varphi)&\phantom{-}\cos(\varphi)\end{pmatrix}(x-u)
\end{equation*}
be the rotation around $u$ by the angle $\varphi$.
As in the previous case, there exists $\eps > 0$ such that the rotations of $v$ and $w$ around $u$ by an angle at most $\varepsilon$ and with respective appropriately chosen rotation directions lie in $\inte(T)$.
We may assume that the appropriate rotation directions for $v$ and $w$ do no coincide, as otherwise some rotation of $S$ is contained in $T$ and intersects only one edge of $T$,
contradicting the assumption that $S$ is optimally contained in $T$ under rotation.
Thus, we can relabel $v$ and $w$ if necessary such that
$\Psi_\varphi(v) \notin T$ and $\Psi_\varphi(w) \in \inte(T)$ for all $\varphi \in (0,\eps)$, and $R_\varphi(v) \in \inte(T)$ and $R_\varphi(w) \notin T$ for all $\varphi \in (-\eps,0)$.
Our goal is to show that for some $\varphi\in (-\eps,\eps)\setminus\setn{0}$, the triangle $\Psi_\varphi(S)$ can be translated parallel to $F_u$ such that the translated triangle lies entirely in $T$ and intersects at most two edges of $T$, see \cref{fig:case3} for an illustration.
By \cref{thm:brandenberg-koenig}, this shows that $x+\Psi_\varphi(S)$ is not optimally contained in $T$ and, hence, that $S$ is not optimally contained in $T$ under rotation.
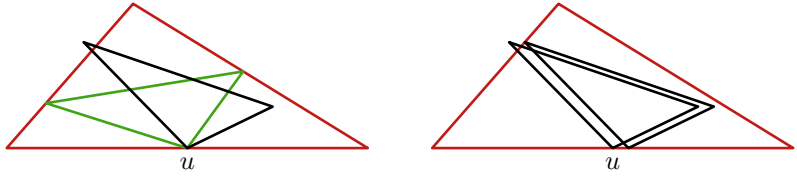
\begin{figure}
    \centering

\begin{tikzpicture}[line cap=round,line join=round,>=stealth,x=1.2cm,y=1.2cm]
\draw[dred,line width=1pt] (0.,0.) -- (1.4,1.6) -- (4.,0.) -- cycle;
\draw[dgreen,line width=1pt] (2.,0.) -- (0.43543079765994286,0.497635197325649) -- (2.619109799087832,0.8497785851767189) -- cycle;
\draw[line width=1pt] (2.,0.) -- (0.8521929627183726,1.1739065481098203) -- (2.9455883872248725,0.4596555113310254) -- cycle;
\draw (2,0) node[below]{$u$};
\end{tikzpicture}
\qquad
\begin{tikzpicture}[line cap=round,line join=round,>=stealth,x=1.2cm,y=1.2cm]
\draw[dred,line width=1pt] (0.,0.) -- (1.4,1.6) -- (4.,0.) -- cycle;
\draw[line width=1pt] (2.,0.) -- (0.8521929627183726,1.1739065481098203) -- (2.9455883872248725,0.4596555113310254) -- cycle;
\draw[line width=1pt] (1.0271682295960927,1.1739065481098203) -- (3.1205636541025927,0.45965551133102545) -- (2.1749752668777202,0.) -- cycle;
\draw (2,0) node[below]{$u$};
\end{tikzpicture}

\caption{Case 3 in the proof of \cref{thm:under-rotation}: $S$ (green) is optimally contained in $T$ (red), but not under rotation.
This is because after suitable rotation (left panel) and translation (right panel), the transformed $S$ is a subset of $T$ but does not intersect all edges of $T$.} \label{fig:case3}
\end{figure}

To this end, let $d_u\defeq a-b$, $d_v\defeq b-c$, and $d_w\defeq c-a$ be direction vectors of $F_u$, $F_v$, and $F_w$, respectively.
The vectors $d_u, d_v, d_w$ are pairwise linearly independent.
Now, note for $z \in \setn{v,w}$ and $x \in \R^2$ that there exists
a unique pair of numbers $(\mu_z(x),\lambda_z(x))$ with $x + \mu_z(x) d_u = z + \lambda_z(x) d_z$.
Note that the map $\mu_z:\R^2\to\R$ given by
\begin{equation*}
	\mu_z(x) = \begin{pmatrix}1&0\end{pmatrix} \left(\begin{array}{c|c}d_u& -d_z\end{array}\right)^{-1} (z-x)
\end{equation*}
is an affine map.
The quantity $\mu_z(x)$ can be understood as a signed distance that measures how far $x$ is from $F_z$ and whether $F_z$ can be reached from $x$ by going in the direction of $d_u$ or its negative.
Our goal is to show $\mu_w(\Psi_\varphi(w)) > \mu_v(\Psi_\varphi(v)) > 0$ or $0 > \mu_w(\Psi_\varphi(w)) > \mu_v(\Psi_\varphi(v))$ for some $\varphi \in (-\eps,\eps) \setminus \setn{0}$.

Next, we define for $\varphi \in (-\frac{\uppi}{2},\frac{\uppi}{2})$ a map $p_\varphi:\R^2\setminus \setn{u}\to\R^2$ by
\begin{equation*}
    p_\varphi(x) \defeq u + \frac{1}{\cos(\varphi)} (\Psi_\varphi(x) - u)= x + \tan(\varphi) (\Psi_{\frac{\uppi}{2}}(x)-u).
\end{equation*}
Then $p_\varphi(x)$ is the unique intersection point of the ray from $u$ through $\Psi_\varphi(x)$ and the straight line supporting $u + \gauge{u-x} \B_2^2$ at $x$, see \cref{fig:notation} for an illustration.

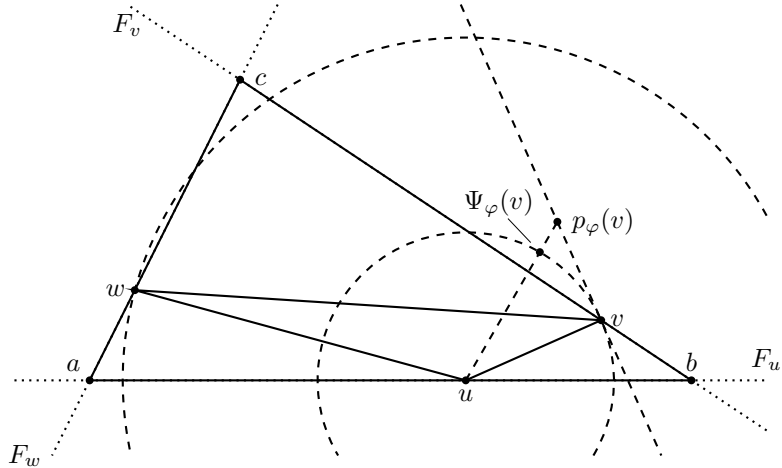
\begin{figure}[ht]
\newcommand\scale{4}
\newcommand\ds{1.5/\scale pt}
\begin{tikzpicture}[scale=\scale]
\draw[thick,dashed] ({(0.355-0.2*1.25)/0.45},1.25) -- ({(0.355+0.2*0.25)/0.45},-0.25); 

\draw[thick,dotted] (-1.25,0) --(1.25,0) node[anchor=south] {$F_u$}; \draw[thick,dotted] (1.25,-0.25/1.5) -- (1-1.5*1.25,1.25) node[anchor=north] {$F_v$}; \draw[thick,dotted] (0.5*1.25-1,1.25) -- (-0.25*0.5-1,-0.25) node[anchor=east] {$F_w$}; 

\draw[thick,dashed] (0.25,0) -- (0.554491,0.527394);

\draw[thick] (0.25,0) -- (0.7,0.2) -- (-0.85,0.3) -- cycle;
\draw[thick] (-1,0) -- (1,0) -- (-0.5,1) -- cycle;

\draw[dashed,thick] ({0.25+(3*sqrt(2)/10)},-0.25) arc({atan2(-0.25,3*sqrt(2)/10))}:{360+atan2(-0.25,-3*sqrt(2)/10))}:{sqrt(0.45*0.45+0.2*0.2)});
\draw[dashed,thick] (1.25,{sqrt(3/10)}) arc({atan2(sqrt(3/10),1)}:{360+atan2(-0.25,-3*sqrt(55)/20}:{sqrt(1.1*1.1+0.3*0.3)});

\fill (0.25,0) circle(\ds) node[anchor=north] {$u$};
\fill (0.7,0.2) circle(\ds) node[anchor=west] {$v$};
\fill (-0.85,0.3) circle(\ds) node[anchor=east] {$w$};
\fill (-1,0) circle(\ds) node[anchor=south east] {$a$};
\fill (1,0) circle(\ds) node[anchor=south] {$b$};
\fill (-0.5,1) circle(\ds) node[right=2pt] {$c$};

\fill (0.496221,0.426468) circle(\ds);
\draw (0.36,0.59) node {$\Psi_\varphi(v)$};
\draw[ultra thin,shorten <=5pt,shorten >=5pt](0.496221,0.426468)--(0.36,0.57);
\fill (0.554491,0.527394) circle(\ds) node[right=2pt] {$p_\varphi(v)$};
\end{tikzpicture}
\caption{Notation used in Case 3 in the proof of \cref{thm:under-rotation}.}
\label{fig:notation}
\end{figure}

Let further
\begin{equation*}
    c_z \defeq \begin{pmatrix}1&0\end{pmatrix} \left(\begin{array}{c|c}d_u& -d_z\end{array}\right)^{-1}  (u-\Psi_{\frac{\uppi}{2}}(z)).
\end{equation*}
Then $\mu_z(p_\varphi(z))= c_z \tan(\varphi)$ for all $\varphi \in (-\frac{\uppi}{2},\frac{\uppi}{2})$.
If $\varphi\in (-\frac{\uppi}{2},\frac{\uppi}{2})\setminus\setn{0}$, then $\frac{1}{\cos(\varphi)}>1$, so $\Psi_\varphi(z) \in (u,p_\varphi(z))$.
Since $\mu_z$ is an affine map, $\mu_z(\Psi_\varphi(z))$ is between $\mu_z(u)$ and $\mu_z(p_\varphi(z))$.
Next, we determine the signs and the order of the latter three numbers, depending on the sign of $\varphi \in (-\eps,\eps)\setminus\setn{0}$.
Since $u + [0,\infty) d_u$ does not intersect $F_v$ but $F_w$, we already know that $\mu_v(u) < 0$ and $\mu_w(u)>0$.

Assume first that $\varphi\in (0,\eps)$.
Then $\Psi_\varphi(v) \notin T$ by assumption and $[u,\Psi_\varphi(v)) \cap F_v\neq \emptyset$ if $\eps$ is sufficiently small.
Since $\mu_v(x) = 0$ precisely if $x \in F_v$, we conclude from $\mu_v(u) < 0$ and the affinity of $\mu_v$ that $0 < \mu_v(\Psi_\varphi(v)) < \mu_v(p_\varphi(v))$.
In particular, we obtain from $\mu_v(p_\varphi(v))= c_v \tan(\varphi)$  that $c_v>0$.
Similarly, $\Psi_\varphi(w)\in\inte(T)$ by assumption and $[u,p_\varphi(w)]\cap F_w =\emptyset$ if $\eps$ is sufficiently small.
Since $\mu_w(x)=0$ precisely if $x\in F_w$, we conclude similar to before that $\mu_w(u)>\mu_w(\Psi_\varphi(w)) > \mu_w(p_\varphi(w)) > 0$ and thus $c_w>0$.

Second, for $\varphi\in (-\eps,0)$, we have $\Psi_\varphi(v) \in \inte(T)$ by assumption and $[u,p_\varphi(v)]\cap F_v=\emptyset$ if $\eps$ is small enough.
Since $\mu_v(x) = 0$ precisely if $x \in F_v$, we conclude that $\mu_v(\Psi_\varphi(v)) < \mu_v(p_\varphi(v)) < 0$.
Similarly, $0 > \mu_w(\Psi_\varphi(w)) > \mu_w(p_\varphi(w))$ if $\eps$ is small enough.

Finally, we distinguish two cases.
If $c_w \geq c_v$, then we have for any  $\varphi\in (0,\eps)$ that
\begin{equation}\label{eq:compare-coeff}
	\mu_w(\Psi_\varphi(w))
	> \mu_w(p_\varphi(w))
	= c_w \tan(\varphi)
	\geq c_v \tan(\varphi)
	= \mu_v(p_\varphi(v))
	> \mu_v(\Psi_\varphi(v))
	> 0.
\end{equation}
By definition of $\mu_w$, we always have $\Psi_\varphi(w) + \mu_w(\Psi_\varphi(w)) d_u \in F_w$.
Since $\Psi_\varphi(w)\in\inte(T)$, we even obtain for $\eps$ sufficiently small that $\Psi_\varphi(w) + \mu_w(\Psi_\varphi(w)) d_u$ lies in the relative interior of $E_w$.
Further, we have by \eqref{eq:compare-coeff} that
\begin{equation*}
	\Psi_\varphi(w) + \mu_v(\Psi_\varphi(v)) d_u
	\in [\Psi_\varphi(w), \Psi_\varphi(w) + \mu_w(\Psi_\varphi(w)) d_u)
	\subset \inte(T).
\end{equation*}
Moreover, we also have $\Psi_\varphi(v) + \mu_v(\Psi_\varphi(v)) d_u\in E_v$.
By $\mu_w(u)>\mu_w(\Psi_\varphi(w)) > \mu_v(\Psi_\varphi(v))$, we further obtain $\Psi_\varphi(u) + \mu_v(\Psi_\varphi(v)) d_u=u + \mu_v(\Psi_\varphi(v)) d_u\in E_u \setminus E_w$.
Altogether, $\Psi_\varphi(S) + \mu_v(\Psi_\varphi(v)) d_u \subset T$ and $\Psi_\varphi(S) + \mu_v(\Psi_\varphi(v)) d_u$ does not intersect $F_w$.
This completes the proof in this case.

If instead $c_v \geq c_w$, we have for any $\varphi \in (-\eps,0)$ that
\begin{equation*}
	0
	> \mu_w(\Psi_\varphi(w))
	> \mu_w(p_\varphi(w))
	= c_w \tan(\varphi)
	\geq c_v \tan(\varphi)
	= \mu_v(p_\varphi(v))
	> \mu_v(\Psi_\varphi(v)).
\end{equation*}
Analogously, we get $\Psi_\varphi(S) + \mu_w(\Psi_\varphi(w)) d_u \subset T$
and that $\Psi_\varphi(S) + \mu_w(\Psi_\varphi(w)) d_u$ does not intersect $F_v$ for $\eps$ small enough, which completes the proof also in this case.
\end{proof}

We immediately obtain a short list of candidates for the optimal reflection line for the $1$st Minkowski chirality of triangles.

\begin{proposition}\label{thm:triangle-candidates}
Let $K \in \CK^2$ be a triangle and $U \in \gr{1}{\R^2}$ such that $R(K,\Phi_U(K)) = \alpha_1(K)$.
Then $U$ is parallel to the bisector of an interior angle of $K$ or perpendicular to an edge of $K$.
\end{proposition}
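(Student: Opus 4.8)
The plan is to reduce the statement to \cref{thm:under-rotation} by choosing a minimizing axis and building the corresponding optimal containing triangle. Concretely, I would fix $U\in\gr{1}{\R^2}$ with $R(K,\Phi_U(K))=\alpha_1(K)$ (such a $U$ exists by \cref{prop:chirality-attained}), and since the circumradius is attained as a minimum, pick a translation vector $t$ with $K\subset T\defeq\alpha_1(K)\Phi_U(K)+t$. As a scaled, reflected, translated copy of $K$, the set $T$ is again a triangle, and the goal is to apply \cref{thm:under-rotation} to $S=K$ and this $T$, then to extract the direction of $U$ from the resulting edge-in-edge incidence.

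The crux — and what I expect to be the main obstacle — is verifying that $K$ is \emph{optimally contained in $T$ under rotation}. The idea is to argue by contradiction using that a reflection composed with a rotation is again a reflection. Suppose some rotation $\Psi$ (which, after absorbing its center into the translation, may be taken about the origin), some $\lambda>1$, and some $x$ satisfy $\lambda\Psi(K)+x\subset T$. Rescaling and translating gives $\Psi(K)\subset_t\frac{\alpha_1(K)}{\lambda}\Phi_U(K)$, and applying the linear involution $\Phi_U$ yields $\Phi_U(\Psi(K))\subset_t\frac{\alpha_1(K)}{\lambda}K$. Now $\Phi_U\circ\Psi$ is an orientation-reversing orthogonal map fixing the origin, hence equals $\Phi_{U'}$ for some $U'\in\gr{1}{\R^2}$; applying $\Phi_{U'}$ once more to $\Phi_{U'}(K)\subset_t\frac{\alpha_1(K)}{\lambda}K$ produces $K\subset_t\frac{\alpha_1(K)}{\lambda}\Phi_{U'}(K)$, so that $R(K,\Phi_{U'}(K))\le\frac{\alpha_1(K)}{\lambda}<\alpha_1(K)$, contradicting \eqref{eq:chirality-linear}. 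The delicate point here is precisely that an improving rotation of $K$ inside $T$ gets converted into an improving \emph{reflection axis} $U'$, which is what lets \cref{thm:under-rotation} engage.

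With optimality under rotation in hand, \cref{thm:under-rotation} hands me an edge $E$ of $K$ contained in an edge $F$ of $T$, and the remaining task is to read off $U$. The key refinement I would stress is to track inner normals rather than bare directions: since $K\subset T$ and $E\subset F$ share a common supporting line, both triangles lie in the same halfspace, so the inner unit normals of $K$ along $E$ and of $T$ along $F$ coincide. As $T=\alpha_1(K)\Phi_U(K)+t$ with $\Phi_U$ orthogonal, $F$ is the image of some edge $E''$ of $K$ and the inner normal of $T$ along $F$ is $\Phi_U(n_{E''})$, giving $\Phi_U(n_{E''})=n_E$. If $E''=E$, then $\Phi_U$ fixes $n_E$, so $U$ is spanned by $n_E$ and hence perpendicular to the edge $E$. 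If $E''\neq E$, the two edges meet in a vertex $v$ with $n_E\neq\pm n_{E''}$, the axis of a reflection sending $n_{E''}$ to $n_E$ is spanned by $n_E+n_{E''}$, and a short angle computation at $v$ identifies $n_E+n_{E''}$ with the interior-angle bisector direction there. This normal bookkeeping is exactly what discards the spurious possibilities (axis parallel to an edge, or parallel to an external bisector) that a purely directional reading of \cref{thm:under-rotation} would leave open, yielding precisely the two claimed alternatives.
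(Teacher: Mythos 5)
Your proposal is correct and follows essentially the same route as the paper: both reduce the statement to \cref{thm:under-rotation} by observing that an improving rotation of $K$ inside $\alpha_1(K)\Phi_U(K)+t$ would compose with $\Phi_U$ to yield an improving reflection axis, contradicting \eqref{eq:chirality-linear}, and then read off the direction of $U$ from the resulting edge-in-edge incidence. If anything, your write-up is more careful at two points the paper treats tersely: you spell out the contradiction argument establishing optimality under rotation (the paper merely asserts it), and your inner-normal bookkeeping simultaneously eliminates the axis-parallel-to-an-edge case (which the paper excludes by a separate argument about the vertex opposite $E$) and the exterior-bisector case (which the paper's directional case analysis does not explicitly address).
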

\begin{proof}
The triangle $K$ is optimally contained in $x + \alpha_1(K) \Phi_U(K)$ under rotation for some $x \in \R^2$.
The previous proposition therefore shows that at least one edge $E$ of $K$ is a subset of an edge of $x + \alpha_1(K) \Phi_U(K)$.
If $V$ denotes the straight line supporting $K$ at $E$, we see that $\Phi_U(V)$ must be parallel to $V$ or to the affine hull of another edge of $K$.
This means that $U$ is perpendicular to an edge of $K$, parallel to the bisector of an interior angle of $K$, or parallel to an edge of $K$.
The latter case cannot happen, as $x$ would have to be such that $E\subset x + \alpha_1(K) \Phi_U(E)$.
But then the vertex of $K$ opposite $E$ is not an element of $x + \alpha_1(K) \Phi_U(K)$.
\end{proof}

Next, we determine the the minimal dilation factor in the inclusion $K\subset x+\lambda K^\prime$, where $K^\prime$ is the image of $K$ under reflection across a straight line parallel to one of the angle bisectors or perpendicular to one of the edges.

\begin{proposition}\label{thm:dilation-factors}
Let $K=\conv\setn{a,b,c}\in\CK^2$ be a triangle with $x\defeq \gauge{b-c}\leq y\defeq\gauge{a-c}$.
\begin{enumerate}[label={(\roman*)},leftmargin=*,align=left,noitemsep]
\item{Let $U \subset \R^2$ be the bisector of the interior angle at $c$.
Then $R(K,\Phi_U(K))=\frac{y}{x}$.\label{bisector}}
\item{Let $U$ be the perpendicular bisector of $[a,b]$, and $z\defeq\gauge{a-b}$.
Then $R(K,\Phi_U(K))=1+\frac{y^2-x^2}{z^2}$.\label{perpendicular}}
\end{enumerate}
\end{proposition}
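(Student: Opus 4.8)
The plan is to exhibit, in each case, an explicit dilated translate $C = \lambda \Phi_U(K) + t$ of the mirror image and to verify that $K$ is optimally contained in $C$ via the triangle criterion following \cref{thm:brandenberg-koenig}: the containment $K \subset C$ is optimal precisely when each of the three edges of $C$ contains a vertex of $K$. Once this is checked, $R(K,C) = 1$, and the translation invariance and scaling behavior of the circumradius give $R(K,\Phi_U(K)) = \lambda$. Throughout I would first normalize $K$ by a rigid motion (permissible since $R(\cdot,\cdot)$ and $\Phi$ transform compatibly, cf.\ the proof of \cref{thm:similarity-invariance}), placing the relevant mirror line in a convenient position.

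For part \ref{bisector}, I would put $c$ at the origin with the bisector $U$ along the first coordinate axis, so that $\Phi_U$ fixes $c$ and interchanges the two rays emanating from $c$; concretely $a = y(\cos\tfrac{\gamma}{2},\sin\tfrac{\gamma}{2})$ and $b = x(\cos\tfrac{\gamma}{2},-\sin\tfrac{\gamma}{2})$, where $\gamma$ is the interior angle at $c$. Then $\Phi_U(K)$ is the triangle at $c$ with the two edge lengths interchanged, and I would take $\lambda = y/x \ge 1$. Scaling by $\lambda$ sends the short edge of $\Phi_U(K)$ (length $x$) to length $y$, recovering $a$, and the long one to length $y^2/x \ge x$ along the direction of $b$; hence $C = \tfrac{y}{x}\Phi_U(K)$ shares the edge $[c,a]$ with $K$ and contains $b$ in the relative interior of its remaining edge through $c$. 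Thus $K \subset C$, each edge of $C$ carries a vertex of $K$ (the edge $[c,a]$ carries $c$, the long edge carries $b$, the third edge carries $a$), and \cref{thm:brandenberg-koenig} yields $R(K,\Phi_U(K)) = y/x$.

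For part \ref{perpendicular}, I would center the base on the origin with $[a,b]$ on the first axis, so $a = (-\tfrac{z}{2},0)$, $b = (\tfrac{z}{2},0)$, and $U$ the second axis; then $\Phi_U$ swaps $a$ and $b$ and reflects the apex $c = (c_1,c_2)$ to $c' = (-c_1,c_2)$, with $y^2 - x^2 = 2c_1 z$, so that $c_1 = \tfrac{y^2-x^2}{2z} \ge 0$ precisely because $x \le y$. Rather than guessing the placement, I would pin down $t$ and $\lambda$ by forcing all three supporting lines of $C$ to touch $K$: computing the three outer edge normals $n_0,n_1,n_2$ of $\Phi_U(K)$ and imposing $h_K(n_i) = \lambda\,h_{\Phi_U(K)}(n_i) + t^\top n_i$ for $i = 0,1,2$ yields a linear system whose solution is $t_2 = 0$, $t_1 = (\lambda-1)\tfrac{z}{2}$, and $\lambda = 1 + \tfrac{2c_1}{z} = 1 + \tfrac{y^2-x^2}{z^2}$. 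Since every $p \in K$ satisfies $p^\top n_i \le h_K(n_i) = h_C(n_i)$ while $C$ is exactly the intersection of the three half-spaces $\{p : p^\top n_i \le h_C(n_i)\}$, the inclusion $K \subset C$ is automatic; the three normals positively span the plane, so \cref{thm:brandenberg-koenig} certifies optimality and hence $R(K,\Phi_U(K)) = \lambda$.

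The main obstacle is part \ref{perpendicular}: unlike in part \ref{bisector}, no non-base edges of $K$ and $\Phi_U(K)$ are parallel, so the correct orientation of the enclosing homothet is not evident, and a naive base-aligned guess lets the apex of $K$ escape the container. The support-function bookkeeping circumvents this, but it hinges on correctly identifying which vertex of $K$ realizes each value $h_K(n_i)$ — exactly the step where the hypothesis $x \le y$ (equivalently $c_1 \ge 0$) is needed to resolve the relevant maxima and to force $c$ onto the image of the long edge of $C$.
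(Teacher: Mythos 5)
Your proof is correct. Part \ref{bisector} is essentially the paper's own argument: the paper observes, without coordinates, that $\Phi_U$ swaps $\aff\setn{a,c}$ and $\aff\setn{b,c}$, so that $K\subset c+\frac{y}{x}\left(\Phi_U(K)-c\right)$ is optimal by \cref{thm:brandenberg-koenig}; your coordinate version makes the same edge correspondence explicit. In part \ref{perpendicular}, however, you take a genuinely different route. The paper argues synthetically: it centers the homothety at the vertex $a$, introduces the intersection point $d$ of $[a,c]$ and $[b,\Phi_U(c)]$, certifies optimality of $K\subset a+\frac{y}{\gauge{d-a}}\left(\Phi_U(K)-a\right)$ via \cref{thm:brandenberg-koenig}, and then evaluates $\frac{y}{\gauge{d-a}}=1+\frac{y^2-x^2}{z^2}$ by combining the intercept theorem with Ptolemy's theorem for the isosceles trapezoid $\conv\setn{a,b,c,\Phi_U(c)}$. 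You instead determine the homothety $(\lambda,t)$ analytically by matching support values of $K$ and $C=\lambda\Phi_U(K)+t$ in the three outer edge-normal directions of $\Phi_U(K)$. I verified your computation: with $a=(-\tfrac{z}{2},0)^\top$, $b=(\tfrac{z}{2},0)^\top$, $c=(c_1,c_2)^\top$, and $c_1=\frac{y^2-x^2}{2z}\geq 0$, the $3\times 3$ linear system has the unique solution $t=(c_1,0)^\top$ and $\lambda=1+\frac{2c_1}{z}=1+\frac{y^2-x^2}{z^2}$, and the hypothesis $x\leq y$ enters exactly where you say, namely in identifying that $h_K$ in the two slanted normal directions is attained at $c$ and at $a$, respectively. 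Your closing step is also sound: equality of the three support values makes $K\subset C$ automatic because the triangle $C$ is the intersection of its three edge half-planes, and $0\in\conv\setn{n_0,n_1,n_2}$ holds for the outer normals of any triangle (their edge-length-weighted sum vanishes), so \cref{thm:brandenberg-koenig} gives optimality and $R(K,\Phi_U(K))=\lambda$. Comparing the two: the paper's argument is coordinate-free and exhibits the contact structure geometrically (shared base line, with $c$ landing on the image of the long edge through $d$), and Ptolemy explains where the value $1+\frac{y^2-x^2}{z^2}$ comes from; your support-function system needs no a priori guess of the homothety center or of which contacts occur---the linear algebra produces both---and it transfers mechanically to other polygonal reflection problems, at the cost of hiding the geometric reason for the final formula inside the algebra.
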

\begin{proof}
For \ref{bisector}, note that $\Phi_U$ maps $\aff \setn{a,c}$ onto $\aff \setn{b,c}$ and vice versa.
Thus, $K\subset c+\frac{y}{x} (\Phi_U(K)-c)$, and the containment is optimal by \cref{thm:brandenberg-koenig}, see \cref{fig:triangle-bisector-reflection}.
This means that $R(K,\Phi_U(K))=\frac{y}{x}$.\\[\baselineskip]
For \ref{perpendicular}, note that $\Phi_U(K)=\conv\setn{a,b,\Phi_U(c)}$, and the line segments $[a,c]$ and $[b,\Phi_U(c)]$ have a unique intersection point $d$.
Further, we have
\begin{equation*}
    K\subset a+\frac{y}{\gauge{d-a}}(\Phi_U(K)-a),
\end{equation*}
see \cref{fig:triangle-bisector-reflection}. \cref{thm:brandenberg-koenig} shows the optimality of this containment, i.e., $R(K,\Phi_U(K))=\frac{y}{\gauge{d-a}}$.
Since $\aff\setn{a,b}$ and $\aff\setn{c,\Phi_U(c)}$ are parallel, we get as a consequence of the intercept theorem that
\begin{equation*}
    \frac{\gauge{d-c}}{\gauge{d-a}} =\frac{\gauge{c-\Phi_U(c)}}{z}.
\end{equation*}
Ptolemy's theorem applied to the isosceles trapezoid $\conv\setn{a,b,c,\Phi_U(c)}$ shows $\gauge{c-\Phi_U(c)}=\frac{y^2-x^2}{z}$.
Thus, 
\begin{equation*}
    \frac{y}{\gauge{d-a}}=\frac{\gauge{d-a}+\gauge{d-c}}{\gauge{d-a}}=1+\frac{\gauge{d-c}}{\gauge{d-a}}=1+\frac{\gauge{c-\Phi_U(c)}}{z}=1+\frac{y^2-x^2}{z^2}.\qedhere
\end{equation*}
\end{proof}

\begin{figure}[h]
\newcommand\scale{1.1}
\newcommand\ds{1.5/\scale pt}
\begin{minipage}{0.4\linewidth}
 \centering
     \begin{tikzpicture}[line cap=round,line join=round,>=triangle 45,x=1.0cm,y=1.0cm, scale=\scale]
\pgfmathsetmacro{\x}{1.5}
\coordinate (a) at (30:2);
\coordinate (b) at (-30:1.2);
\coordinate (c) at (0,0);
\coordinate (aprime) at (-30:2);
\coordinate (bprime) at (30:1.2);
\draw[thick] (aprime)--(-30:3.3333)--(a);

\fill[white] (-30:2.4) circle(4pt);

\draw ($(bprime)+(-0.3,0.3)$) node{$\Phi_U(b)$};

\draw[thick, gray] (3,0)--(-1,0) node[left]{$U$};
\draw[thick, dgreen] (a)--(b)--(c)--cycle;
\draw[thick, dred] (aprime)--(bprime)--(c)--cycle;
\fill (a) circle (\ds) node[right]{$a$};
\fill (b) circle (\ds) node[below]{$b$};
\fill (c) circle (\ds) node[above left]{$c$};
\fill (aprime) circle (\ds) node[below]{$\Phi_U(a)$};
\fill (bprime) circle (\ds);
\end{tikzpicture}
\end{minipage}
   \begin{minipage}{0.5\linewidth}
    \centering

\begin{tikzpicture}[line cap=round,line join=round,>=triangle 45,x=1.0cm,y=1.0cm, scale=\scale]
\pgfmathsetmacro{\x}{1.5}
\coordinate (a) at (0,0);
\coordinate (b) at (1,0);
\coordinate (c) at (\x,0.7);
\coordinate (cprime) at (1-\x,0.7);

\draw[dgreen, thick] (a)--(b)--(c)--cycle;
\draw[dred, thick] (a)--(b)--(cprime)--cycle;
\path[name path=line 1] (a) -- (c);
\path [name path=line 2] (b) -- (cprime);
\path [name intersections={of=line 1 and line 2,by=E}];
\draw[gray, thick] (0.5,-0.7)--(0.5,2) node[left]{$U$};
\fill[white] ($(E)+(0,0.25)$) circle(5pt);
\fill (a) circle(\ds) node[below]{$a$};
\fill (b) circle(\ds) node[below]{$b$};
\fill (c) circle(\ds) node[below]{$c$};
\fill (cprime) circle(\ds) node[left]{$\Phi_U(c)$};
\fill (E) circle(\ds) node[above]{$d$};

\draw[thick] (b)--(2*\x,0)--($(a)!2*\x!(cprime)$)--(cprime);
\end{tikzpicture}   
\end{minipage} 

  \caption{Optimal containment of a triangle $K$ (green) in the dilated mirror image after reflection across the reflection axis $U$ (gray) for the case of an angle bisector (left panel) or perpendicular bisector on an edge (right panel): $\Phi_U(K)$ (red), and the appropriate translate of $R(K,\Phi_U(K))\Phi_U(K)$ (black).}\label{fig:triangle-bisector-reflection}
\end{figure}
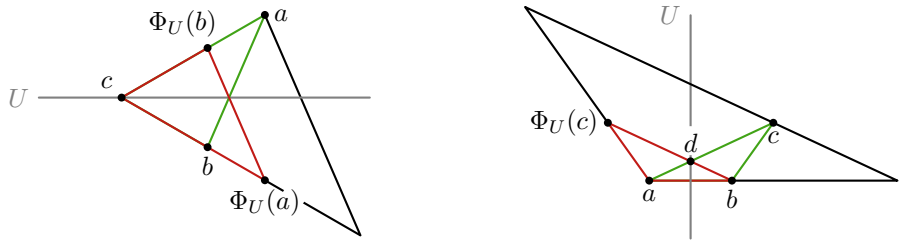

Now we are ready to give a formula for $\alpha_1(K)$ for a triangle $K$ in terms of its side lengths, and rule out three of the candidates from \cref{thm:triangle-candidates}.
\begin{proposition}\label{thm:triangle-rule-out}
Let $K\in\CK^2$ be a triangle with side lengths $0<x\leq y\leq z$.
Then
\begin{equation}\label{eq:best-of-sup}
\alpha_1(K)=\min\setn{\frac{z}{y},\frac{y}{x},1+\frac{y^2-x^2}{z^2}}.
\end{equation}
In particular, we have $\alpha_1(K)=R(K,\Phi_U(K))$ for $U \in \gr{1}{\R^2}$ a line parallel to the bisector of the smallest or the largest interior angle of $K$ or perpendicular to the longest edge of $K$.
\end{proposition}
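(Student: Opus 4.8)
The plan is to combine the candidate list from \cref{thm:triangle-candidates} with the explicit circumradii of \cref{thm:dilation-factors} and then discard the candidates that can never be optimal. First, by \cref{prop:chirality-attained} there is a direction $U^\ast\in\gr{1}{\R^2}$ with $R(K,\Phi_{U^\ast}(K))=\alpha_1(K)$, and by \cref{thm:triangle-candidates} this $U^\ast$ is parallel to the bisector of an interior angle or perpendicular to an edge. Since $R(K,\Phi_U(K))$ depends only on the direction of $U$ (as $R(K,\Phi_U(K))=R(K,\Phi_{U-U}(K))$), there are only six relevant directions: the three angle bisectors and the three edge-perpendiculars. Labeling the sides $0<x\le y\le z$ and applying \cref{thm:dilation-factors} to each vertex and each edge, I would record the six values
\[
\frac{z}{y},\quad \frac{z}{x},\quad \frac{y}{x},\quad 1+\frac{z^2-y^2}{x^2},\quad 1+\frac{z^2-x^2}{y^2},\quad 1+\frac{y^2-x^2}{z^2},
\]
where the first three come from the bisectors at the vertices opposite the sides $x$, $y$, $z$ (i.e.\ the smallest, middle, and largest interior angle), and the last three come from the perpendiculars to the edges of length $x$, $y$, $z$. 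Since $\alpha_1(K)$ is the infimum over all directions and is attained at one of these six, it equals the minimum of these six numbers.

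It then remains to show that the three extra candidates---the middle-angle bisector value $z/x$ and the perpendicular values $1+\frac{z^2-y^2}{x^2}$ and $1+\frac{z^2-x^2}{y^2}$ to the two shorter edges---are never strictly smallest; concretely I would prove that each is at least $z/y$, the value belonging to the bisector of the smallest angle. The bound $z/x\ge z/y$ is immediate from $x\le y$. For the perpendicular to the middle edge I would clear denominators and rewrite $1+\frac{z^2-x^2}{y^2}\ge \frac{z}{y}$ as $z(z-y)+(y^2-x^2)\ge 0$, which holds termwise because $z\ge y\ge x$.

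The genuinely delicate inequality, and the step I expect to be the main obstacle, is $1+\frac{z^2-y^2}{x^2}\ge \frac{z}{y}$ for the perpendicular to the shortest edge, since there the short side sits in the denominator and naive bounds are too weak. I would clear denominators to reach $x^2(y-z)+y(z^2-y^2)\ge 0$ and then use the factorization $(z-y)\bigl(y(z+y)-x^2\bigr)\ge 0$; the bracket is positive because $z+y\ge 2y\ge 2x>x$ forces $y(z+y)\ge 2y^2\ge 2x^2>x^2$, while $z-y\ge 0$. With all three extra candidates bounded below by $z/y$, the six-term minimum collapses to $\min\setn{\tfrac{z}{y},\tfrac{y}{x},1+\tfrac{y^2-x^2}{z^2}}$, which is exactly \eqref{eq:best-of-sup}. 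Finally, reading off which of the three surviving directions realizes the minimum yields the ``in particular'' statement, since $z/y$, $y/x$, and $1+\frac{y^2-x^2}{z^2}$ are attained respectively at the bisector of the smallest angle, the bisector of the largest angle, and the perpendicular to the longest edge.
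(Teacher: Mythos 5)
Your proposal is correct and follows essentially the same route as the paper: reduce to the six candidate values via \cref{thm:triangle-candidates,thm:dilation-factors}, then eliminate the middle-angle bisector and the perpendiculars to the two shorter edges by domination inequalities, with the decisive step being exactly the paper's factorization $(z-y)\bigl(yz+y^2-x^2\bigr)\ge 0$ for the perpendicular to the shortest edge. The only cosmetic difference is that you dominate all three discarded candidates by $z/y$, whereas the paper pairs $z/x$ with $y/x$ and $1+\frac{z^2-x^2}{y^2}$ with $1+\frac{y^2-x^2}{z^2}$; both pairings work equally well.
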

\begin{proof}
We may assume that $K$ is a scalene triangle, i.e., $x < y < z$, since otherwise all assertions are clear.
From \cref{thm:triangle-candidates,thm:dilation-factors}, we know that
\begin{equation*}
    \alpha_1(K)=\min\setn{\frac{z}{y},\frac{y}{x},\frac{z}{x},1+\frac{y^2-x^2}{z^2},1+\frac{z^2-x^2}{y^2},1+\frac{z^2-y^2}{x^2}}.
\end{equation*}
In order to verify \eqref{eq:best-of-sup}, it suffices to show that
\begin{equation*}
    \min\setn{\frac{z}{y},\frac{y}{x},1+\frac{y^2-x^2}{z^2}}<\min\setn{\frac{z}{x},1+\frac{z^2-x^2}{y^2},1+\frac{z^2-y^2}{x^2}}.
\end{equation*}
First, note that $\frac{y}{x}<\frac{z}{x}$ since $y<z$.
Next, we have
\begin{equation*}
    1+\frac{y^2-x^2}{z^2}<1+\frac{z^2-x^2}{z^2}<1+\frac{z^2-x^2}{y^2}.
\end{equation*}
Last, observe that
    $\frac{z}{y}<1+\frac{z^2-y^2}{x^2}$
is equivalent to $0<x^2y+yz^2-y^3-x^2z=(z-y)(y^2-x^2+yz)$.
The latter is true since we assumed $z>y$ and $y^2>x^2$.
\end{proof}

Note that in the setup of \cref{thm:triangle-rule-out}, we do not always have $1+\frac{y^2-x^2}{z^2}<1+\frac{z^2-y^2}{x^2}$ whenever $0<x<y<z$ and $x+y>z$.
For instance, by taking $x$ sufficiently large, $y=x^2$ and $z=x^2+\frac{1}{4}$ we have
\begin{equation*}
    1+\frac{y^2-x^2}{z^2}=1+\frac{x^4-x^2}{x^4+2x^2+1}\approx 2, \quad \text{while} \quad 1+\frac{z^2-y^2}{x^2}=1+\frac{\frac{1}{2}x^2+\frac{1}{16}}{x^2}\approx \frac{3}{2}.
\end{equation*}
Thus, in the proof of \cref{thm:triangle-rule-out}, we could not have ruled out the perpendicular bisector of the shortest edge in the same way we eliminated the perpendicular bisector of the middle edge.

In the following corollary, we describe the \enquote{phase diagram} depicted in \cref{fig:triangle-phases} for the reflection axis at which $\alpha_1(K)$ of a given scalene triangle $K$ is attained.
We parameterize the triangle in terms of its side lengths $x$, $y$, and $1$, assuming that $0<x<y<1<x+y$.
Later, we give a similar result for triangles parameterized by a vertex $(x,y)^\top$ while fixing the other vertices at $(0,0)^\top$ and $(1,0)^\top$, see \cref{thm:triangle-phases-2,fig:triangle-phases-2}.

\begin{corollary}\label{thm:triangle-phases}
Let $K$ be a triangle with side lengths $x$, $y$, and $1$ such that $0<x<y<1<x+y$.
Define $\Psi_1,\Psi_2:(\frac{1}{2},1)\to\R$ by
\begin{equation*}
    \Psi_1(y)=
    \begin{cases}
        0&\text{if }y\leq \frac{\sqrt{2}}{2},\\
        \sqrt{\frac{y^3+y-1}{y}}&\text{if }\frac{\sqrt{2}}{2}<y\leq y_0,\\
        y^2&\text{if }y>y_0
    \end{cases}
\quad\text{and}\quad
    \Psi_2(y)=
    \begin{cases}
        0&\text{if }y\leq \frac{\sqrt{2}}{2},\\
        \frac{1}{2}(\sqrt{y^2+4}-y)&\text{if }\frac{\sqrt{2}}{2}<y\leq y_0,\\
        y^2&\text{if }y>y_0
    \end{cases}
\end{equation*}
where $y_0\approx 0.819173$ is the unique positive real solution of the equation $y^4+y^3=1$.
Let further
\begin{align*}
    \Omega&\defeq \setcond{(x,y)^\top\in\R^2}{0<x<y<1<x+y}, &\Lcal&\defeq \setcond{(x,y)^\top\in\Omega}{x>\Psi_2(y)},\\
    \Scal&\defeq \setcond{(x,y)^\top\in\Omega}{x\leq \Psi_1(y)}, &\Pcal&\defeq \Omega\setminus\left(\Lcal\cup \Scal\right). \end{align*}
Then $\alpha_1(K)=R(K,\Phi_U(K))$, where
\begin{enumerate}[label={(\roman*)},leftmargin=*,align=left,noitemsep]
\item{$U$ is the bisector of the largest interior angle of $K$ whenever $(x,y)\in \Lcal$, }
\item{$U$ is the bisector of the smallest interior angle of $K$ whenever $(x,y)\in \Scal$, and}
\item{$U$ is perpendicular to the longest edge of $K$ whenever $(x,y)\in \Pcal$.}
\end{enumerate}
\end{corollary}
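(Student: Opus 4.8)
The plan is to reduce the corollary to an elementary minimization of the three explicit dilation factors supplied by \cref{thm:triangle-rule-out}. With the longest side normalized to length $1$, that proposition gives $\alpha_1(K)=\min\{L,S,P\}$, where
\[
L=\frac{y}{x},\qquad S=\frac{1}{y},\qquad P=1+y^2-x^2
\]
are the circumradii attained by reflecting across the bisector of the largest interior angle, the bisector of the smallest interior angle, and the perpendicular to the longest edge, respectively. The regions $\Lcal,\Scal,\Pcal$ are exactly the subsets of $\Omega$ on which each of these three functions is pointwise smallest, so the entire task is to describe this partition, just as for parallelograms in \cref{thm:parallelogram-phase-diagram}.

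First I would compute the three pairwise-equality loci inside $\Omega$, each solved for $x$ as a function of $y$. A direct calculation gives $L=S$ iff $x=y^2$; clearing denominators and factoring gives $L=P$ iff $(x-y)\bigl(1-x(x+y)\bigr)=0$, hence, since $x<y$ on $\Omega$, iff $x(x+y)=1$, i.e. $x=\tfrac{1}{2}(\sqrt{y^2+4}-y)$; and $S=P$ iff $x^2=\tfrac{y^3+y-1}{y}$, i.e. $x=\sqrt{(y^3+y-1)/y}$. The same factorizations fix the direction of each inequality: $L\le S$ iff $x\ge y^2$, $L\le P$ iff $x(x+y)\ge 1$, and $S\le P$ iff $x^2\le (y^3+y-1)/y$. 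This already identifies the middle branch of $\Psi_1$ with the $S=P$ curve and the middle branch of $\Psi_2$ with the $L=P$ curve.

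Next I would isolate the two critical values of $y$ that organize the diagram. Imposing $L=S=P$ forces $x=y^2$ together with $y^4+y^3=1$, pinning down the triple point $(x,y)=(y_0^2,y_0)$, at which all three axes are simultaneously optimal and across which the order of the three loci reverses. The second critical value $y=\tfrac{\sqrt2}{2}$ comes from the boundary of $\Omega$: substituting $x=y$ into $x(x+y)=1$ yields $2y^2=1$, and substituting $x=1-y$ into $x^2y=y^3+y-1$ also yields $2y^2=1$. Hence both the $L=P$ and the $S=P$ curves leave $\Omega$ precisely at $y=\tfrac{\sqrt2}{2}$, which accounts for the first branch of $\Psi_1$ and $\Psi_2$.

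Finally I would assemble the partition interval by interval. For $\tfrac{\sqrt2}{2}<y<y_0$ one checks the ordering $\sqrt{(y^3+y-1)/y}<y^2<\tfrac12(\sqrt{y^2+4}-y)$, so $\Scal$ lies below $\Pcal$, which lies below $\Lcal$; at $y=y_0$ the three loci meet at $y_0^2$; and for $y>y_0$ the ordering reverses to $\tfrac12(\sqrt{y^2+4}-y)<y^2<\sqrt{(y^3+y-1)/y}$, so the intervening $\Pcal$-strip collapses and the split happens along $x=y^2$, yielding the third branch $\Psi_1=\Psi_2=y^2$. No final coordinate change is required, since the statement is already parametrized by $(x,y)$. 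I expect the main obstacle to be exactly this last step: proving the relative order of the three loci on each subinterval and ruling out any further crossing. This reduces to tracking the sign of a few polynomials — in particular the factor $y^4+y^3-1$, which decides whether $y^2$ lies above or below the other two curves — so it is elementary but must be handled carefully to exclude spurious transition points.
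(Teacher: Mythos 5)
Your strategy coincides with the paper's proof of \cref{thm:triangle-phases}: reduce via \cref{thm:triangle-rule-out} to comparing $L=\frac{y}{x}$, $S=\frac{1}{y}$, $P=1+y^2-x^2$, compute the three pairwise-equality curves with the same factorizations, locate the triple point via $y^4+y^3=1$, and read off the partition. The one methodological difference is in your favor: the paper decides which function is smallest on each of the six regions cut out by the curves by evaluating at representative points, whereas you extract the direction of each pairwise inequality from the factorizations themselves ($L\le S$ iff $x\ge y^2$, $L\le P$ iff $x(x+y)\ge 1$, $S\le P$ iff $x^2y\le y^3+y-1$). Together with your observation that the curve-ordering claims reduce to the sign of $y^4+y^3-1$ (indeed $\sqrt{(y^3+y-1)/y}\le y^2$ and $y^2\le\frac{1}{2}\left(\sqrt{y^2+4}-y\right)$ are each equivalent to $y^4+y^3\le 1$ on the relevant range), this genuinely closes the \enquote{no further crossings} point you flagged as the main obstacle.

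There is, however, a gap: your assembly covers only $y>\frac{\sqrt{2}}{2}$ and skips the strip $\frac{1}{2}<y\le\frac{\sqrt{2}}{2}$, where both the $S=P$ and the $L=P$ curves have left $\Omega$. Saying that this exit \enquote{accounts for the first branch of $\Psi_1$ and $\Psi_2$} proves nothing, and this is precisely where a real problem hides. Your own inequality directions settle the strip in two lines: $x<y$ gives $x(x+y)<2y^2\le 1$, hence $P<L$, and $x>1-y$ gives $y^3+y-1-x^2y<2y^2-1\le 0$, hence $P<S$; so the perpendicular axis is strictly optimal on the entire strip. But this contradicts the corollary as literally printed: since $\Psi_2(y)=0$ there, the set $\Lcal=\setcond{(x,y)^\top\in\Omega}{x>\Psi_2(y)}$ contains the whole strip, and claim (i) then fails, e.g., at $(x,y)=\left(\frac{1}{2},\frac{7}{10}\right)$, where $P=1.24<1.4=L$. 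In other words, your opening identification of $\Lcal,\Scal,\Pcal$ with the minimality domains of $L,S,P$ is false for the printed definitions; it is what the corrected statement, the paper's figure, and the paper's own (sketchy) proof actually establish, the first branch of $\Psi_2$ needing a value $\geq y$ rather than $0$. A complete write-up must include the strip computation and flag this discrepancy; as planned, your proof is silent exactly on the case where the statement needs repair.
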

\begin{proof}
Let $L,S,P : \Omega \to \R$ be defined by
\begin{equation*}
    L(x,y) = \frac{y}{x},
        \quad
    S(x,y) = \frac{1}{y},
        \quad \text{and} \quad
    P(x,y) = 1 + y^2 - x^2.
\end{equation*}
These expressions are by \cref{thm:dilation-factors} equal to $R(K,\Phi_U(K))$ for $U \subset \R^2$ the bisector of the largest interior angle of $K$, the bisector of the smallest interior angle of $K$, or perpendicular to the longest edge of $K$, respectively.
By \cref{thm:triangle-rule-out}, proving the corollary can
be recast as determining the domains in $\Omega$ where each of $L$, $S$, and $P$ is minimal.
As we shall see, this essentially boils down to finding the solutions to the equations $L=S$, $S=P$ and $L=P$.
For $(x,y)^\top\in \Omega$, we have
\begin{align*}
    L(x,y)=S(x,y)&\Leftrightarrow x=y^2,\\
    S(x,y)=P(x,y)&\Leftrightarrow 1=y+y^3-x^2y\Leftrightarrow x=\sqrt{\frac{y^3+y-1}{y}}, \quad \text{and}\\P(x,y)=L(x,y)&\Leftrightarrow y=x+xy^2-x^3 \Leftrightarrow 0=(x-y)(x^2+xy-1)
    \Leftrightarrow x=\frac{1}{2}\left(\sqrt{y^2+4}-y\right).
\end{align*}
There is a unique element $(x_0,y_0)\in\Omega$ for which $S(x_0,y_0)=L(x_0,y_0)=P(x_0,y_0)$ with $y_0\approx 0.819173$ being the unique positive real solution of the equation $y^4+y^3=1$.
Indeed, from $x=y^2$ and $x^2 + xy - 1 = 0$, we obtain $y^4+y^3-1=0$.
The derivative of $y\mapsto y^4+y^3-1$ is positive for $y>0$, so $y\mapsto y^4+y^3-1$ itself is strictly increasing for $y>0$.
Hence, there exists at most one positive real solution $y_0$ of the equation $y^4+y^3-1=0$.
The existence of a solution in $[0,1]$ follows from the intermediate value theorem.

Let $f_{LS} : (\frac{\sqrt{5}-1}{2},1) \to \R$ and $f_{SP}, f_{PL} : (\frac{\sqrt{2}}{2},1) \to \R$ be given by
\begin{equation*}
    f_{LS}(y) = y^2,
        \quad
    f_{SP}(y) = \sqrt{\frac{y^3+y-1}{y}},
        \quad \text{and} \quad
    f_{PL}(y) = \frac{1}{2}\left(\sqrt{y^2+4}-y\right)
\end{equation*}
Note that for $(x,y)^\top\in\Omega$, the statements $x=f_{LS}(y)$ and $L(x,y)=S(x,y)$ are equivalent.
Likewise, $x=f_{SP}(y)$ and $S(x,y)=P(x,y)$ are equivalent, and so are $x=f_{PL}(y)$ and $P(x,y)=L(x,y)$.

The graphs of $f_{LS}$, $f_{SP}$, and $f_{PL}$ subdivide $\Omega$ into six regions, corresponding to the six possible strict orderings of the three numbers $L(x,y)$, $S(x,y)$, and $P(x,y)$, see the left panel of \cref{fig:triangle-phases}.

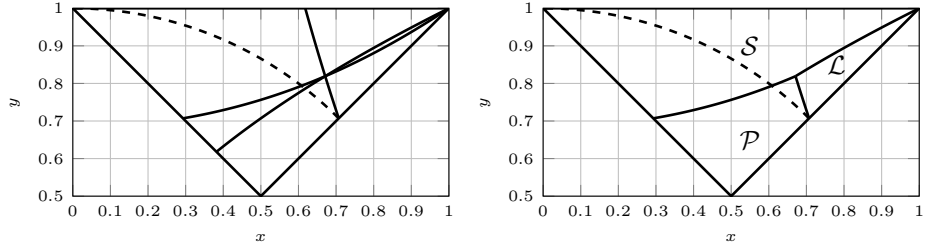
\begin{figure}[ht]
    \centering
\begin{tikzpicture}
\begin{axis}[x=5cm,y=5cm,xtick={0,0.1,...,1},xmajorgrids={true},ytick={0.5,0.6,...,1},ymajorgrids={true},xmin=0,xmax=1,ymin=0.5,ymax=1,xlabel={$x$},ylabel={$y$},label style={font=\tiny},tick label style={font=\tiny}]
\draw [line width=1pt](axis cs:0,1)--(axis cs:0.5,0.5)--(axis cs:1,1)--cycle;
\addplot [line width=1pt,domain=0.618034:1,samples=300] ({x^2},{x});
\addplot [line width=1pt,domain=0.70711:1,samples=300] ({sqrt((x^3+x-1)/x)},{x});
\addplot [line width=1pt,domain=0.618034:0.70711,samples=300] ({x},{1/x-x});
\addplot [dashed,line width=1pt,domain=45:90,samples=300]({cos(x)},{sin(x)});
\end{axis}
\end{tikzpicture}
\begin{tikzpicture}
\begin{axis}[x=5cm,y=5cm,xtick={0,0.1,...,1},xmajorgrids={true},ytick={0.5,0.6,...,1},ymajorgrids={true},xmin=0,xmax=1,ymin=0.5,ymax=1,xlabel={$x$},ylabel={$y$},label style={font=\tiny},tick label style={font=\tiny}]
\draw [line width=1pt] (axis cs:0,1)--(axis cs:0.5,0.5)--(axis cs:1,1)--cycle;
\addplot [line width=1pt,domain=0.8191725:1,samples=300] ({x^2},{x});
\addplot [line width=1pt,domain=0.70711:0.8191725,samples=300] ({sqrt((x^3+x-1)/x)},{x});
\addplot [line width=1pt,domain=0.6710436:0.70711,samples=300] ({x},{1/x-x});
\draw (axis cs:0.55,0.65) node{$\mathcal{P}$};
\draw (axis cs:0.78,0.85) node{$\mathcal{L}$};
\draw (axis cs:0.55,0.9) node{$\mathcal{S}$};
\addplot [dashed,line width=1pt,domain=45:90,samples=300]({cos(x)},{sin(x)});
\end{axis}
\end{tikzpicture}
    \caption{Parameterization of scalene triangles by their side lengths $x$, $y$, and $1$ with $0<x<y<1<x+y$.
    The dashed line indicates the right-angled triangles.
    Left panel: The solid lines inside the parameter space indicate the pairs $(x,y)^\top$ for which two of the three candidate reflection axes lead to the same value of $R(K,\Phi_U(K))$.
    Right panel: The parameter space is separated into regions which correspond to the reflection axis $U$ for which $\alpha_1(K)=R(K,\Phi_U(K))$: the bisector of a largest interior angle for $\mathcal{L}$, the bisector of a smallest interior angle for $\mathcal{S}$, and the perpendicular line to a longest edge for $\mathcal{P}$.
    }
    \label{fig:triangle-phases}
\end{figure}

To determine on which two of the six regions each of the three numbers is smallest, it suffices to pick a point $(x,y)^\top$ from every region and check the ordering for these representatives.
For instance, if $x=0.6$ and $y=0.78$, then $P(x,y)\approx 1.2484<S(x,y)\approx 1.28205<L(x,y)=1.3$.
Thus, if $K$ has side lengths $x$, $y$, and $1$ with $(x,y)^\top \in\Omega$ and in addition $\max\setn{1-y,f_{SP}(y)}<x<f_{LS}(y)$, then $(x,y)^\top\in\Pcal$ and $\alpha_1(K)$ is attained at the reflection axes perpendicular to the longest edge of $K$.
\end{proof}

A different set of representatives of the similarity classes of triangles is given by the triangles of the form
$\conv\setn{(0,0)^\top,(1,0)^\top,(x,y)^\top}$ with $x>\frac{1}{2}$, $y>0$, and $x^2+y^2<1$.
The side lengths of such a triangle are in increasing order given by $\sqrt{(x-1)^2+y^2}$, $\sqrt{x^2+y^2}$, and $1$.
Then, similarly to \cref{thm:triangle-phases}, one can show the following result.
\begin{corollary}\label{thm:triangle-phases-2}
Let $K= \conv\setn{(0,0)^\top,(1,0)^\top,(x,y)^\top}$ where $x>\frac{1}{2}$, $y>0$, and $x^2+y^2<1$.
Define $\Psi_1,\Psi_2:(\frac{1}{2},1)\to\R$ by
\begin{equation*}
    \Psi_1(x)=
    \begin{cases}
        \sqrt{\frac{-2x^2+1+\sqrt{5-8x}}{2}}&\text{if }x\leq x_0,\\
        \frac{\sqrt{1-4x^4}}{2x}&\text{if }x_0<x<\frac{1}{\sqrt{2}},\\
        0&\text{if }x>\frac{1}{\sqrt{2}}
    \end{cases}
\quad\text{and}\quad
    \Psi_2(x)=
    \begin{cases}
    \sqrt{\frac{x^2(3-2x)}{1+2x}}&\text{if }x\leq x_0,\\
    \frac{\sqrt{1-4x^4}}{2x}&\text{if }x_0<x<\frac{1}{\sqrt{2}},\\
    0&\text{if }x>\frac{1}{\sqrt{2}} 
    \end{cases}
\end{equation*}
where $x_0\approx 0.61037$ is the unique positive real solution of the equation $16 x^4-2x-1=0$.
Let further
\begin{align*}
    \Omega&\defeq \setcond{(x,y)^\top\in\R^2}{x>\frac{1}{2},\, y>0,\, x^2+y^2<1},
    &\Scal&\defeq \setcond{(x,y)^\top\in\Omega}{y\geq \Psi_1(x)},\\
    \Pcal&\defeq \setcond{(x,y)^\top\in\Omega}{y<\Psi_2(x)},
    &\Lcal&\defeq \Omega\setminus\left(\Pcal\cup \Scal\right).
\end{align*}
Then $\alpha_1(K)=R(K,\Phi_U(K))$, where
\begin{enumerate}[label={(\roman*)},leftmargin=*,align=left,noitemsep]
\item{$U$ is the bisector of the largest interior angle of $K$ whenever $(x,y)\in \Lcal$,}
\item{$U$ is the bisector of the smallest interior angle of $K$ whenever $(x,y)\in \Scal$, and}
\item{$U$ is perpendicular to the longest edge of $K$ whenever $(x,y)\in \Pcal$.}
\end{enumerate}
\end{corollary}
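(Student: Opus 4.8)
The plan is to mirror the proof of \cref{thm:triangle-phases}, replacing the side-length parameterization by the present vertex parameterization and re-deriving the three candidate values of $R(K,\Phi_U(K))$ as explicit functions of $(x,y)$. First I would record that, under the constraints $x>\tfrac12$, $y>0$, $x^2+y^2<1$, the side lengths of $K$ in increasing order are $a\defeq\sqrt{(1-x)^2+y^2}$, $b\defeq\sqrt{x^2+y^2}$, and $c\defeq 1$ (here $(1-x)^2<x^2$ since $x>\tfrac12$). Applying \cref{thm:dilation-factors,thm:triangle-rule-out} then yields $\alpha_1(K)=\min\{L,S,P\}$, where $L(x,y)=b/a$ corresponds to the bisector of the largest interior angle, $S(x,y)=c/b=1/\sqrt{x^2+y^2}$ to the bisector of the smallest interior angle, and $P(x,y)=1+(b^2-a^2)/c^2$ to the perpendicular of the longest edge. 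A short computation simplifies the last expression to the clean form $P(x,y)=2x$.

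Next I would determine the three pairwise-coincidence loci in $\Omega$, exactly as the functions $f_{LS},f_{SP},f_{PL}$ are found in the proof of \cref{thm:triangle-phases}. Squaring and solving for $y$ gives: $L=S$ is equivalent to $(x^2+y^2)^2=(1-x)^2+y^2$, i.e.\ $y=\sqrt{(-2x^2+1+\sqrt{5-8x})/2}$ (valid for $x\le\tfrac58$); $S=P$ is equivalent to $x^2+y^2=1/(4x^2)$, i.e.\ $y=\sqrt{1-4x^4}/(2x)$ (valid for $x\le 1/\sqrt2$); and $P=L$ reduces, after factoring out the positive quantity $2x-1$, to $y=\sqrt{x^2(3-2x)/(1+2x)}$. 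These are precisely the branch formulas appearing in $\Psi_1$ and $\Psi_2$. The common triple point is obtained by intersecting any two of these curves; eliminating $y$ leads to $16x^4-2x-1=0$, whose positive root $x_0\approx 0.61037$ is unique because $x\mapsto 16x^4-2x-1$ is strictly increasing on $(\tfrac12,\infty)$, with existence of a root in $(\tfrac12,1)$ following from the intermediate value theorem.

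With the curves in hand, I would argue that they subdivide $\Omega$ into regions on which the ordering of $L$, $S$, $P$ is constant, and identify the minimizer in each region by evaluating the three functions at a single representative point (as is done for $(x,y)=(0.6,0.78)$ in the proof of \cref{thm:triangle-phases}). Tracking the minimizer across vertical slices gives the three-case description of $\Psi_1,\Psi_2$: for $x\le x_0$ the upper boundary of the smallest-angle region is the $L=S$ curve and the lower boundary of the perpendicular region is the $P=L$ curve (the $S=P$ curve lying harmlessly inside the $L$-optimal region); for $x_0<x<1/\sqrt2$ the intermediate, $L$-optimal region has collapsed and the single relevant boundary is the $S=P$ curve, so both $\Psi_1$ and $\Psi_2$ equal $\sqrt{1-4x^4}/(2x)$; and for $x\ge 1/\sqrt2$ one checks $P=2x\ge\sqrt2>S$ together with $S<L$, so $S$ is optimal on all of $\Omega$ and $\Psi_1=\Psi_2=0$. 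Finally, since $\arccos$ and the monotone substitutions preserve the relevant inequalities, the region descriptions translate into the stated sets $\Scal$, $\Pcal$, $\Lcal$.

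The main obstacle I anticipate is bookkeeping rather than conceptual: one must verify that the three coincidence curves are nested in the correct vertical order ($\Psi_2\le\Psi_1$) on each $x$-slice and that the $L$-optimal region disappears exactly at $x=x_0$, so that the piecewise definitions of $\Psi_1$ and $\Psi_2$ glue continuously at $x_0$ and at $1/\sqrt2$. Confirming the boundary behaviour at $x=1/\sqrt2$ — namely that the perpendicular axis ceases to be optimal precisely there because $2x$ crosses $\sqrt2$ while $S<\sqrt2$ — is the one place where a genuine, if short, inequality argument rather than a mere algebraic identity is required.
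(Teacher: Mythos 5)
Your proposal is correct and is essentially the proof the paper intends: the paper states this corollary without proof, remarking only that it follows \enquote{similarly to} \cref{thm:triangle-phases}, and your computations carry out exactly that analogy --- the candidate values $L=b/a$, $S=1/\sqrt{x^2+y^2}$, $P=1+(b^2-a^2)/c^2=2x$, the three coincidence curves (including the factorization of $P=L$ by $2x-1$), the triple-point equation $16x^4-2x-1=0$ with its monotonicity-based uniqueness, and the representative-point check in each region all match the structure of the paper's proof of \cref{thm:triangle-phases}. The only point you glide over is that the $L=S$ locus, as a quadratic in $y^2$, has a second root $y^2=\bigl(1-2x^2-\sqrt{5-8x}\bigr)/2$ that is positive for $x$ slightly below $\tfrac{5}{8}$; since that branch exists only for $x>x_0$ and lies inside the region where $P$ is already strictly minimal, it is harmless, but a complete write-up should dismiss it explicitly.
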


An illustration of \cref{thm:triangle-phases-2} is given in \cref{fig:triangle-phases-2}.
\begin{figure}[ht]
    \centering
\begin{tikzpicture}
\begin{axis}[x=6cm,y=6cm,xtick={0.5,0.6,...,1},xmajorgrids={true},ytick={0,0.1,...,0.9},ymajorgrids={true},xmin=0.5,xmax=1,ymin=0,ymax=0.9,xlabel={$x$},ylabel={$y$},label style={font=\tiny},tick label style={font=\tiny}]
\addplot [line width=1pt,domain=0.39:0.2,samples=400] ({1-x}, {sqrt(1/(4*(1-2*x+x^2)) - (x-1)^2)})--(axis cs:0.707107,0);
\addplot[line width=1pt,domain=0:60,samples=200] ({cos(x)},{sin(x)}); 
\addplot[line width=1pt,dashed,domain=0:90,samples=200] ({0.5+0.5*cos(x)},{0.5*sin(x)}); 
\addplot [line width=1pt,domain=0.39:0.5,samples=100] ({1-x},{sqrt((((-1 + x)^2 * (1 + 2 * x))/(3 - 2 * x)))});
\addplot [line width=1pt,domain=1:1.5,samples=100] ({1-(1 - x + x^2)/(2 * x^2)}, {0.5 * sqrt((-1 + 2 * x + x^2 + 2 * x^3 - x^4)/(x^4))});
\draw (axis cs:0.6,0.3) node{$\mathcal{P}$};
\draw (axis cs:0.55,0.6) node{$\mathcal{L}$};
\draw (axis cs:0.8,0.2) node{$\mathcal{S}$};
\end{axis}
\end{tikzpicture}  

    \caption{We parameterize triangles $K$ with vertices $(0,0)^\top$ and $(1,0)^\top$ by their third vertex $(x,y)^\top$ with $x\in (\frac{1}{2},1)$ and $y\in (0,\sqrt{1-x^2})$.
    The solid lines separate the regions that correspond to the reflection axis $U$ for which $\alpha_1(K)=R(K,\Phi_U(K))$: the bisector of a largest interior angle for $\mathcal{L}$, the bisector of a smallest interior angle for $\mathcal{S}$, and the perpendicular line to a longest edge for $\mathcal{P}$.
    The dashed line indicates the right-angled triangles.}
    \label{fig:triangle-phases-2}
\end{figure}
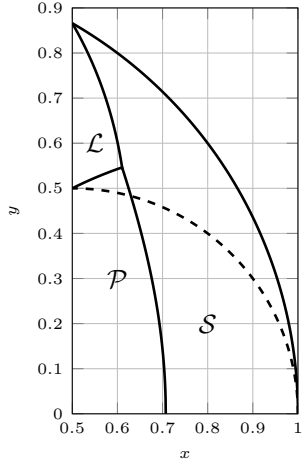

The last part of \cref{thm:triangle} that remains to be proved is \eqref{eq:triangle-range}.

\begin{corollary}
We have $\setcond{\alpha_1(K)}{K\in\CK^2\text{ is a scalene triangle}}=\left(1,\sqrt{2}\right)$.
\end{corollary}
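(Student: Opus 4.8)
The plan is to reduce everything to the explicit formula from \cref{thm:triangle-rule-out}. By the similarity invariance of $\alpha_1$ (\cref{thm:similarity-invariance}), I may normalize the longest side to have length $1$ and parametrize scalene triangles by $(x,y)^\top\in\Omega=\setcond{(x,y)^\top\in\R^2}{0<x<y<1<x+y}$, where $x,y$ are the two shorter side lengths. Writing $L(x,y)=\frac{y}{x}$, $S(x,y)=\frac{1}{y}$, and $P(x,y)=1+y^2-x^2$ as in \cref{thm:triangle-phases}, \cref{thm:triangle-rule-out} gives $\alpha_1(K)=\min\setn{L,S,P}$. The claim then amounts to showing that this minimum takes exactly the values in $(1,\sqrt{2})$ as $(x,y)$ ranges over $\Omega$.

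For the lower bound, I would observe that each of $L$, $S$, $P$ exceeds $1$ on $\Omega$: indeed $L>1$ and $P>1$ follow from $x<y$, while $S>1$ follows from $y<1$. Hence $\alpha_1(K)>1$ throughout, whereas letting $x\to y^-$ (with $y$ fixed in $(\frac12,1)$, so that $x+y>1$ persists) drives $L\to1$ and therefore $\alpha_1\to1$. This shows the infimum of the range is $1$ but is never attained.

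The heart of the argument is the strict upper bound $\min\setn{L,S,P}<\sqrt{2}$. I would argue by contradiction, assuming $L,S,P\geq\sqrt{2}$ simultaneously. From $S\geq\sqrt{2}$ we get $y\leq\frac{1}{\sqrt{2}}$, and from $P\geq\sqrt{2}$ we get $y^2-x^2\geq\sqrt{2}-1$. The decisive input is the triangle inequality $x+y>1$, i.e.\ $x>1-y>0$, which yields $x^2>(1-y)^2$ and hence $y^2-x^2<y^2-(1-y)^2=2y-1$. Combining these forces $\sqrt{2}-1<2y-1$, that is $y>\frac{1}{\sqrt{2}}$, contradicting $y\leq\frac{1}{\sqrt{2}}$. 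I expect this to be the main obstacle, as it is where the triangle inequality must be played off against the candidate values at once; notably the bound coming from $L$ is not even needed.

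Finally, to pin the supremum at $\sqrt{2}$, I would examine the degenerate boundary point $(x,y)^\top=(1-\frac{1}{\sqrt{2}},\frac{1}{\sqrt{2}})^\top$ on $x+y=1$, where a direct computation gives $S=P=\sqrt{2}$ and $L=1+\sqrt{2}$. Approaching this point from inside $\Omega$ and invoking continuity of $L,S,P$ yields $\alpha_1\to\sqrt{2}$, so the supremum is $\sqrt{2}$, again not attained by the strict bound just proved. To conclude, $\Omega$ is an open triangle and hence connected, and $\alpha_1=\min\setn{L,S,P}$ is continuous on it, so its image is an interval; since this interval has infimum $1$ and supremum $\sqrt{2}$ with both excluded, it must equal $(1,\sqrt{2})$.
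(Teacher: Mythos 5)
Your proposal is correct and follows essentially the same route as the paper: the same formula from \cref{thm:triangle-rule-out}, the same use of the triangle inequality to play the perpendicular-bisector bound against the smallest-angle bound (the paper's chain $\alpha_1(K) \leq 1+\frac{y^2-x^2}{z^2} < \frac{2y}{z}$ followed by the case split on $\frac{y}{z}$ versus $\frac{1}{\sqrt{2}}$ is exactly your contradiction argument in contrapositive form), and the same degenerate triangle with side lengths $\left(1-\frac{1}{\sqrt{2}},\frac{1}{\sqrt{2}},1\right)$ to realize the supremum. The only cosmetic difference is that you obtain the interval structure from continuity of $\min\setn{L,S,P}$ on the connected parameter space $\Omega$, where the paper invokes the Hausdorff continuity of $\alpha_1$ from \cref{prop:chirality-attained} together with the intermediate value theorem.
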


\begin{proof}

Let $0<x \leq y \leq z$ be the side lengths of a triangle $K \in \CK^2$.
Then $z < x+y$ and in particular $x^2 > (z-y)^2$.
By \cref{thm:triangle-rule-out}, we have
\begin{equation*}
    \alpha_1(K)
    \leq 1 + \frac{y^2-x^2}{z^2}
    < 1 + \frac{y^2-(z-y)^2}{z^2}
    = \frac{2y}{z}.
\end{equation*}
Therefore, we directly get $\alpha_1(K) < \sqrt{2}$ whenever $\frac{y}{z} \leq \frac{1}{\sqrt{2}}$.
Otherwise, we use \cref{thm:triangle-rule-out} again to obtain
\begin{equation*}
    \alpha_1(K)
    \leq \frac{z}{y}
    < \sqrt{2}.
\end{equation*}

To complete the proof, by \cref{prop:chirality-attained} and the intermediate value theorem,
it remains to find a sequence $(K_i)_{i\in\N}$ of triangles $K_i\in\CK^2$ such that $\lim_{i\to\infty}\alpha_1(K_i)=\sqrt{2}$.
For $i\in\N$, we choose the triangle $K_i \defeq \conv\setn{ (0,0)^\top, (1,0)^\top, (\frac{1}{\sqrt{2}},\frac{1}{i+2})^\top }$.
Letting $i\to\infty$, the ordered triple of side lengths of $K_i$ converges to $(1-\frac{1}{\sqrt{2}},\frac{1}{\sqrt{2}},1)$.
Therefore, \cref{thm:triangle-rule-out} gives
\begin{equation*}
    \lim_{i\to\infty} \alpha_1(K_i)
    = \min \setn{ \frac{1}{\frac{1}{\sqrt{2}}}, \frac{\frac{1}{\sqrt{2}}}{1-\frac{1}{\sqrt{2}}}, 1 + \frac{\frac{1}{2} - \left(1-\frac{1}{\sqrt{2}} \right)^2}{1} }
    = \sqrt{2}.\qedhere
\end{equation*}
\end{proof}

\section{Open problems}

We close this paper with some final directions for problems to consider in future work.

First, we know from \cref{thm:dbm} for any $j \in \{0, \ldots, n\}$ and $K \in \CK^n$ that $\alpha_j(K) \leq n$.
As outlined in the introduction and \cref{chap:main-theorem}, this bound is tight if and only if it is tight for just simplices.
While \cref{thm:triangle} shows that the latter does not hold for $(n,j) = (2,1)$, the question of tightness remains open in the general case.
As a starting point for deriving better bounds on the Minkowski chiralities, it would therefore be interesting to determine the tightness of $\alpha_j(K) \leq n$ for simplices $K \in \CK^n$.
Once a pair $(n,j)$ for which the tightness fails is found, one can consider strengthening the upper bound $n$.
In particular, finding the best possible upper bound of the $1$st Minkowski chirality of planar convex bodies appears to be an intriguing problem.

Second, \cref{thm:upperboundsymmetric,thm:parallelogram} show that any point-symmetric $K \in \CK^2$ satisfies $\alpha_1(K) \leq \sqrt{2}$, with equality precisely for a specific family of parallelograms.
In dimension $n=3$, \cref{thm:dbm} yields the analogous inequality $\alpha_j(K) \leq \sqrt{3}$ for $j \in \{1,2\}$ and any point-symmetric $K \in \CK^3$.
According to \cref{thm:dbm_asymm} and \cite[Theorem~1.2]{GrundbacherKobos2024}, the only candidates for $K$ that could achieve equality here are parallelotopes and cross-polytopes.
By \cref{thm:alpha-symmetry}, these two classes of bodies have the same range of values for the $1$st and $2$nd Minkowski chiralities.
Thus, in order to determine whether the bound $\alpha_j(K) \leq \sqrt{3}$ is attained for $j \in \{1,2\}$ and any point-symmetric $K \in \CK^3$, it suffices to check this for just one value of $j$ and either the class of parallelotopes or cross-polytopes.
Note that if the inequality turns out to be attained in particular examples, the subspaces spanned by the principal axes of the John ellipsoid would necessarily be optimal.
In higher dimensions, however, the analogous problem becomes more involved since there does not appear to be an easy description of all point-symmetric $K \in \CK^n$ with $d_{BM}(K,\B_2^n) = \sqrt{n}$ according to \cite[Remark~3.1]{GrundbacherKobos2024}.

Last, it might be helpful to find conditions that describe the optimal situation $K \subset x + \alpha_1(K) \Phi_U(K)$ for $x \in \R^n$ and $U \in \gr{j}{\R^n}$ similar to \cref{thm:brandenberg-koenig}.
Such conditions might help find better general bounds on the Minkowski chirality and understand the optimal reflection subspaces.
Note, though, that \cref{thm:triangle,thm:parallelogram} show that a complete description of the optimal situation for a general convex body might become very involved.

\textbf{Acknowledgements.} KKG and TJ would like to thank Frank Göring for his helpful comments. AC was supported by the German Research Foundation (DFG) Grant CA 3683/1-1. KvD was supported by Postdoc Network Brandenburg (PNB) Individual Grant.

\DeclareFieldFormat{pages}{#1}
\printbibliography[heading=bibintoc]

\bigskip

Andrei Caragea --
Catholic University of Eichstätt--Ingolstadt (KU), Mathematical Institute for Machine Learning and Data Science (MIDS), Germany. \\ \textbf{andrei.caragea@ku.de}

Katherina von Dichter -- 
Brandenburg University of Technology Cottbus--Senftenberg (BTU), Department of Mathematics, Germany. \\
\textbf{katherina.vondichter@b-tu.de}

Kurt Klement Gottwald --
Chemnitz University of Technology, Faculty of Mathematics,  Germany.\\
\textbf{kurt-klement.gottwald@math.tu-chemnitz.de}

Florian Grundbacher -- 
Technical University of Munich, Department of Mathematics, Germany. \\
\textbf{florian.grundbacher@tum.de}

Thomas Jahn --
Friedrich Schiller University Jena, Institute for Mathematics, Germany.\\ \textbf{jahn.thomas@uni-jena.de}

Mia Runge -- 
Technical University of Munich, Department of Mathematics, Germany. \\
\textbf{mia.runge@tum.de}

\vfill\eject

\end{document}